\newcommand\arXiv[1]{\href{http:\sslash arxiv.org/abs/#1}{\nolinkurl{arXiv:#1}}}
\newcommand\MRnumber[1]{\href{http:\sslash www.ams.org/mathscinet-getitem?mr=#1}{\nolinkurl{MR#1}}}
\newcommand\DOI[1]{\href{http:\sslash dx.doi.org/#1}{\nolinkurl{DOI:#1}}}
\newcommand\MAILTO[1]{\href{mailto:#1}{\nolinkurl{#1}}}
\newtheorem{theorem}{Theorem}
\newtheorem{lemma}[subsubsection]{Lemma}
\newtheorem{proposition}[subsubsection]{Proposition}
\newtheorem{conjecture}{Conjecture}
\newtheorem{corollary}[subsubsection]{Corollary}
\theoremstyle{definition}
\newtheorem{example}[subsubsection]{Example}
\renewcommand\mathbb\mathds
\newcommand\bC{\mathbb C}
\newcommand\bF{\mathbb F}
\newcommand\bM{\mathbb M}
\newcommand\bR{\mathbb R}
\newcommand\bZ{\mathbb Z}
\newcommand\cC{\mathcal C}
\newcommand\cM{\mathcal M}
\newcommand\cN{\mathcal N}
\newcommand\cZ{\mathcal Z}
\newcommand\rA{\mathrm A}
\newcommand\rB{\mathrm B}
\newcommand\rC{\mathrm C}
\newcommand\rD{\mathrm D}
\newcommand\rF{\mathrm F}
\newcommand\rJ{\mathrm J}
\newcommand\rO{\mathrm O}
\newcommand\rU{\mathrm U}
\DeclareMathOperator\homology{H}
\renewcommand\H{\homology}
\DeclareMathOperator\cycles{Z}
\newcommand\Z{\cycles}
\DeclareMathOperator\chains{C}
\newcommand\C{\chains}
\renewcommand\d{\mathrm d}
\newcommand\longto\longrightarrow
\newcommand\mono\hookrightarrow
\newcommand\epi\twoheadrightarrow
\newcommand\<\langle
\renewcommand\>\rangle
\newcommand\sminus\smallsetminus
\newcommand\Co{\mathrm{Co}}
\newcommand\Suz{\mathrm{Suz}}
\newcommand\SU{\mathrm{SU}}
\DeclareMathOperator\Aut{Aut}
\DeclareMathOperator\Sym{Sym}
\DeclareMathOperator\Alt{Alt}
\DeclareMathOperator\Fer{Fer}
\DeclareMathOperator\Sq{Sq}
\newcommand\Vect{\cat{Vect}}
\newcommand\tr{\mathrm{tr}}
\newcommand\define[1]{\emph{#1}}
\newcommand\cat[1]{\textsc{#1}}
\title{The Moonshine Anomaly}
\author{Theo Johnson-Freyd}
\date{\today}
\email{\MAILTO{theojf@pitp.ca}}
\address{Perimeter Institute for Theoretical Physics, Waterloo, Ontario}
\begin{document}
\begin{abstract}
  The anomaly for the Monster group $\bM$ acting on its natural (aka moonshine) representation $V^\natural$ is a particular cohomology class $\omega^\natural \in \H^3(\bM,\rU(1))$ that arises as a conformal field theoretic generalization of the second Chern class of a representation.  
  This paper shows that $\omega^\natural$ has order exactly $24$ and is not a Chern class.  In order to perform this computation, this paper introduces a finite-group version of T-duality, which is used to relate $\omega^\natural$ to the anomaly for the Leech lattice~CFT.
\end{abstract}
\maketitle


\section{Introduction}

Whenever a finite group $G$ acts on a holomorphic conformal field theory $V$, there is a corresponding \define{anomaly} $\omega_V \in \H^3(G,\rU(1))$ that measures the obstruction to gauging (also called orbifolding) the $G$-action. We will review the construction of $\omega_V$ in Section~\ref{orbifolds}: Section~\ref{physical picture} describes the physical picture; Section~\ref{why conformal nets} discusses mathematical foundations; Section~\ref{t-duality} provides a method to  calculate anomalies; and Section~\ref{super case} addresses the fermionic version.
Anomalies can be thought of as a type of ``characteristic class'' for the action of a group not on a vector space but on a conformal field theory, 
and have played an increasingly important role in recent work on moonshine-type phenomena \cite{MR2500561,MR2681787,MR3108775,CLW2016}.
 The most famous example of a finite group acting on a conformal field theory is certainly the Monster group $\bM$ acting on its natural ``moonshine'' representation $V^\natural$. The main result of this paper is a calculation of the corresponding \define{moonshine anomaly} $\omega^\natural \in \H^3(\bM,\rU(1))$, showing in particular that it does not vanish:

\begin{theorem} \label{moonshine theorem}
  The order of $\omega^\natural \in \H^3(\bM,\rU(1))$ is exactly $24$. Although Chern and fractional Pontryagin classes can arise as anomalies, $\omega^\natural$ is neither a Chern nor a fractional Pontryagin class of any representation of $\bM$.
\end{theorem}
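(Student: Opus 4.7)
The plan is to exploit the finite-group T-duality of Section~\ref{t-duality} to transport the computation of $\omega^\natural$ to the Leech lattice CFT $V^\Leech$, whose anomaly is under control because $V^\Leech$ is a lattice model. Since $V^\natural$ arises as the $\bZ/2$-orbifold of $V^\Leech$, and the centralizer $2^{1+24}.\Co_1 \subset \bM$ of a 2B involution is exactly the stabilizer of this orbifold structure, T-duality should identify the restriction $\omega^\natural|_{2^{1+24}.\Co_1}$ with a class built from the transgression of the fractional Pontryagin class $\tfrac{1}{2} p_1$ of the $24$-dimensional real Leech representation of $\Co_0$, up to explicit correction terms coming from the orbifold data.

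To pin down the order, I would proceed in two directions. For the lower bound of $24$, restrict $\omega^\natural$ to a cyclic subgroup $\<g\> \subset \bM$ generated by an element whose McKay--Thompson series has level exactly $24$ (for instance a class $24A$ element); under $\H^3(\<g\>,\rU(1)) \cong \bZ/|g|$, the restriction can be read off from the leading conformal weight of the $g$-twisted sector of $V^\natural$, which for a well-chosen class should produce a generator of $\bZ/24$. For the upper bound, the Leech side gives that $24 \cdot \tfrac{1}{2} p_1(\Leech)$ vanishes in $\H^4(\rB\Co_0,\bZ)$, and T-duality converts this into the vanishing of $24\,\omega^\natural|_{2^{1+24}.\Co_1}$; to extend the bound to all of $\bM$, I would combine this with an analogous restriction argument on a second maximal subgroup (say the normalizer of a 3A element, linked to a different holomorphic cousin of $V^\natural$ by another T-duality) and invoke detection of $\H^3(\bM,\rU(1))$ on a small family of subgroups.

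For the final assertion, I would study the image of the characteristic-class maps $c_2 \colon \rR_{\bC}(\bM) \to \H^3(\bM,\rU(1))$ and $\tfrac{1}{2} p_1 \colon \rR_{\bR}(\bM) \to \H^3(\bM,\rU(1))$. Because every ordinary character of $\bM$ is rational, the restriction of $c_2(V)$ or $\tfrac{1}{2} p_1(V)$ to the cyclic subgroup $\<g\>$ above is a specific quadratic expression in the eigenvalues of $g$ on $V$, landing in an explicit proper sublattice of $\bZ/|g|$. Comparing with the value of $\omega^\natural|_{\<g\>}$ computed in the previous paragraph should rule out every possible $V$, with dimension and level constraints reducing the candidates to a finite check.

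The hardest step will be the upper bound on the full group $\bM$, because T-duality directly controls only the restriction to the subgroup of $\bM$ preserved by the chosen orbifold, and a $24$-torsion relation on one subgroup does not automatically lift to $\bM$. Bridging this gap will likely require combining several T-duality reductions attached to distinct maximal subgroups of $\bM$ and verifying that the corresponding restriction maps jointly inject on the $2$- and $3$-primary parts of $\H^3(\bM,\rU(1))$, which in turn demands some input about the cohomology of $\bM$ itself beyond what the T-duality formalism supplies.
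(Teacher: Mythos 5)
Your overall skeleton --- transporting the computation to $V_\Lambda$ by T-duality, arguing prime by prime, and detecting classes on a small family of subgroups --- does match the paper's strategy, and your worry about assembling local bounds into a global one is handled by standard machinery: for each prime $p$ the $p$-part of $\H^3(\bM,\rU(1))$ injects into the cohomology of any subgroup containing a $p$-Sylow (Lemma~\ref{transfer-restriction}), and the centralizers/normalizers of $p\rB$ elements supply such subgroups. The genuine gap is in your lower bound. The restriction of $\omega^\natural$ to a cyclic subgroup $\langle g\rangle$ is exactly the multiplier system of the $g$-twined character, and no conjugacy class of $\bM$ has a multiplier of order $24$; in particular $24\rA$ is a Fricke class with trivial multiplier, so $\omega^\natural|_{\langle 24\rA\rangle}=0$. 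More structurally, the $2$-primary part of $\omega^\natural$ is \emph{not} detected on any cyclic subgroup: the paper establishes the order-$8$ statement on a nonabelian binary dihedral group $2D_8$ of order $16$ inside $7^{1+4}{:}2A_7$ (Lemma~\ref{even part 2}), where the classes in $\H^3(2D_8,\rU(1))\cong\bZ_{16}$ implementing a T-duality with the dual copy of $2D_8$ in $\Aut(V_\Lambda)$ are precisely those of order $8$, and any such class restricts to the cyclic $\bZ_8\subset 2D_8$ with order only $4$. Your upper-bound sketch also elides a real step: T-duality identifies $\omega^\natural|_{2^{1+24}.\Co_1}$ with the anomaly of $2^{24}.\Co_0$ on $V_\Lambda$, which lives in $\H^3(2^{24}.\Co_0,\rU(1))$, not in $\H^3(\Co_0,\rU(1))\cong\bZ_{24}$; one needs the Lyndon--Hochschild--Serre analysis of Lemma~\ref{even part 1}, plus the fact that $\omega^\natural|_{2^{1+24}}$ has order exactly $2$, to conclude that $8\omega^\natural|_{2^{1+24}.\Co_1}$ is pulled back from $\Co_1$ and then to detect it on $2D_8$ again. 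The exact $3$-part (order $3$, not $9$) and the primes $5,7,13$ each require their own T-duality and spectral-sequence arguments (Lemmas~\ref{moonshine lemma 5 7 13}, \ref{omega natural order 3}, \ref{omega 3c}), which your sketch does not address.

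The same issue undermines your plan for the non-realizability statement. Since the $2$-part of $\omega^\natural$ restricted to any cyclic subgroup has order at most $4$, and second Chern classes of $\bM$-representations can certainly realize classes of that order on cyclic groups, eigenvalue comparisons on cyclic subgroups cannot separate $\omega^\natural$ from the image of $c_2$. The paper's argument again lives on $2D_8$: the fusion of its seven conjugacy classes into the four $\bM$-classes $1$, $2\rB$, $4\rD$, $8\rF$ (Lemma~\ref{merging of conjugacy classes}) forces strong divisibility of the restriction multiplicities of every $\bM$-representation, whence $c_2(V)|_{2D_8}=0$ for all $V$ (Lemma~\ref{restrictions of c2s vanish}), while $\omega^\natural|_{2D_8}$ has order $8$. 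If you want to salvage your approach, the essential missing idea is that the decisive computations must be performed on this nonabelian $2$-group rather than on cyclic subgroups.
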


The proof of Theorem~\ref{moonshine theorem} occupies most of Section~\ref{M}. Specifically, in Section~\ref{large primes} we check that $\H^3(\bM,\rU(1))$ has no elements with order divisible by the primes $p=11$ or $p\geq 17$; Section~\ref{small primes} shows that the order of $\omega^\natural$ is not divisible by the primes $p=5$, $7$, or $13$, and is divisible by $3$ but not $9$; and we show that $8$ but not $16$ divides the order of $\omega^\natural$ in Section~\ref{monster 2}. 
Section~\ref{section not c2}  proves that $\omega^\natural$ is not a Chern class. Section~\ref{closing remarks} contains some further remarks about the value of $\omega^\natural$.

A main step in the proof is the following result due jointly to the author and D.\ Treumann. Let $\Co_0 = 2.\Co_1$ denote Conway's largest group, and $\Lambda$ the Leech lattice.

\begin{theorem}[\cite{JFT}] \label{conway theorem}
  $\H^3(\Co_0,\rU(1))$ is cyclic of order $24$, generated by $\frac{p_1}2(\Lambda \otimes \bR)$, the first fractional Pontryagin class  of the $24$-dimensional defining representation of $\Co_0$. \qed
\end{theorem}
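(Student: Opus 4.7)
The plan is to compute $\H^4(\Co_0, \bZ) \cong \H^3(\Co_0, \rU(1))$ (using the Bockstein for the exponential sequence and finiteness of $\Co_0$) prime by prime, and separately to identify $\frac{p_1}{2}(\Lambda \otimes \bR)$ as a generator. With $|\Co_0| = 2^{22}\cdot 3^9\cdot 5^4\cdot 7^2\cdot 11\cdot 13\cdot 23$, the Cartan--Eilenberg stable-element theorem reduces the $p$-primary part of $\H^4(\Co_0,\bZ)$ to fusion-invariants in $\H^4$ of a Sylow $p$-subgroup.

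For the three largest primes $p \in \{11,13,23\}$, the Sylow subgroup is cyclic of order $p$, and $N_{\Co_0}(P)$ contains a complement acting on $P$ by a primitive root modulo $p$ (visible from the Atlas), which annihilates all of $\H^4(P,\bZ) \cong \bZ/p$. For $p \in \{5,7\}$ a parallel stable-element analysis, based on the well-understood $p$-local structure of $\Co_0$, eliminates the contribution. For $p = 3$ I would work with the $3$-local geometry, for example via the chain $3^{1+4}{:}\mathrm{Sp}_4(3) \subset N_{\Co_0}(3A)$ together with a Lyndon--Hochschild--Serre spectral sequence for the extraspecial quotient, extracting a single $\bZ/3$. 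For the dominant prime $p=2$, I would run the LHS spectral sequence for the central extension $1 \to \bZ/2 \to \Co_0 \to \Co_1 \to 1$, using the known $2$-local cohomology of $\Co_1$ and controlling the differentials through Poincar\'e series comparisons; one expects a $\bZ/8$ contribution.

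To identify $\frac{p_1}{2}(\Lambda \otimes \bR)$ as a generator, observe first that $\H^2(\Co_0,\bZ/2)=0$ (since $\Co_0$ is perfect with trivial Schur multiplier), so $w_2$ of the Leech representation vanishes and $\frac{p_1}{2}$ lifts uniquely to an integral class in $\H^4(B\Co_0,\bZ)$, pulled back from the generator of $\H^4(B\Spin(24),\bZ)\cong\bZ$. To show the resulting class has order exactly $24$, I would restrict to carefully chosen cyclic subgroups generated by elements whose eigenvalues on $\Lambda\otimes\bR$ are tabulated in the Atlas, and compute $\frac{p_1}{2}$ via the standard character formula $\frac{p_1}{2}(g) = \frac12 \sum_j a_j^2 \pmod{|g|}$, where $\{e^{\pm 2\pi i a_j/|g|}\}$ are the eigenvalues of $g$ on $\Lambda\otimes\bC$; a small number of such restrictions should suffice to detect a generator at each prime.

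The principal obstacle is the $2$-primary part: both verifying $\H^4(\Co_0,\bZ)_{(2)} \cong \bZ/8$ and showing that $\frac{p_1}{2}$ attains order $8$ there require detailed control of the Sylow $2$-subgroup (of order $2^{22}$) and its fusion via the $2$-local maximal subgroups such as $2^{11}{:}\mathrm{M}_{24}$ and $2^{1+8}_+.\Omega_8^+(2)$, together with care about differentials in the LHS spectral sequence.
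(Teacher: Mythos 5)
Note first that this paper offers no proof of Theorem~\ref{conway theorem}: it is imported verbatim from \cite{JFT}, so the only comparison available is with that paper's argument as it is echoed here (Sections~\ref{monster 2}, \ref{section not c2}, and \ref{closing remarks}). Your overall architecture --- compute $\H^4(\Co_0,\bZ)$ prime by prime via stable elements and local subgroups, then pin down the order of $\frac{p_1}2(\Lambda\otimes\bR)$ by restriction --- is the right shape, and several pieces are sound: the large primes $11$, $13$, $23$ (small quibble: for $p=23$ the complement in the normalizer has order $11$, not $22$, so it is not generated by a primitive root, though the argument survives because squares of order-$11$ automorphisms still act nontrivially on $\H^4(\bZ_{23},\bZ)$); the vanishing of $w_2$, so that $\frac{p_1}2$ is an honest class pulled back from $\H^4(\mathrm{B}\Spin(24),\bZ)$; and the $3$-part of the detection, where a cyclic subgroup really does suffice (the order-$3$ class with an $8$-dimensional fixed space on $\Lambda$ gives $\frac12\sum a_j^2\equiv 8\equiv 2\bmod 3$).

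The two load-bearing steps are exactly the ones you defer, and one of them is not merely deferred but would fail. (i) The upper bound $\H^4(\Co_0,\bZ)_{(2)}\cong\bZ_8$ is the hard content of \cite{JFT}; your plan to extract it from ``the known $2$-local cohomology of $\Co_1$'' via the LHS spectral sequence for $2.\Co_1$ is circular, since $\H^4(\Co_1,\bZ)_{(2)}\cong\bZ_4$ is itself a main result of \cite{JFT} and not available to be quoted --- indeed Section~\ref{closing remarks} of this paper explains that the adjacent LHS entries such as $\H^2(\Co_1,2^{24})$ and $\H^4(\Co_1,\rU(1))$ are ``far beyond what current computer power can handle,'' and the $\H^1$ computations that are feasible already require Soicher's presentation and the explicit matrices of Appendix~\ref{Soicher matrices}. ``Poincar\'e series comparisons'' is not an argument for a group with Sylow $2$-subgroup of order $2^{22}$. (ii) More decisively, your detection step at $p=2$ cannot work as stated, because the formula $\frac12\sum_j a_j^2$ only computes restrictions to \emph{cyclic} subgroups, and for any representation with rational character the restriction of $\frac{p_1}2$ to a cyclic $2$-group has order at most $4$: since the characteristic polynomial of $g\in\Co_0$ on $\Lambda$ is integral, eigenvalue multiplicities are constant on Galois orbits, and summing $a^2$ over half of a Galois orbit of primitive $2^m$-th roots of unity inside $\bZ/2^k$ always yields a multiple of $2^{k-2}$. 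This is precisely why \cite{JFT} --- and Lemmas~\ref{even part 1}--\ref{even part 3} here --- detect the $2$-primary part on the \emph{non-abelian} binary dihedral group $2D_8\subset 2A_7\subset\Co_0$, where $\H^3(2D_8,\rU(1))\cong\bZ_{16}$ and the image of $\H^3(\Co_0,\rU(1))_{(2)}$ is the even subgroup $\bZ_8$. You need a non-abelian detector and a separate computation of $\H^4(2D_8,\bZ)$ and of the restriction of $\frac{p_1}2$ to it; no ``small number of cyclic restrictions'' will certify order $8$.
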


By Example~\ref{example permutation}, the anomalies of permutation orbifolds are aways Pontryagin classes.
As explained in Example~\ref{example fer 24}, the class $\frac{p_1}2(\Lambda \otimes \bR) \in \H^3(\Co_0,\rU(1))$ is  the anomaly for the action of $\Co_0$ on the ``super moonshine'' module $\mathrm{Fer}(24)\sslash \bZ_2$ of \cite{MR2352133,MR3376736}, and  Pontryagin and  Chern classes of finite-dimensional representations of a group $G$ can always be realized as the anomalies of actions of $G$ on free fermion models.
  There is an a priori upper bound on the orders of Pontryagin and Chern classes of representations that are (like $\Lambda$) defined over~$\bZ$, coming from the fact that the fourth integral group cohomology of $\mathrm{GL}(N,\bZ)$ for $N \gg 0$ is $\bZ_{24} \oplus \bZ_2^2$~\cite{MR753419}.
Generalizing from characteristic classes to CFT anomalies suggests:
\begin{conjecture}
  Let $V$ be a holomorphic conformal field theory and $G \subset \Aut(V)$ a finite group of automorphisms such that both $V$ and the action of $G$ are defined over $\bZ$. Then the corresponding anomaly $\omega_V \in \H^3(G,\rU(1))$ has order dividing $24$.
\end{conjecture}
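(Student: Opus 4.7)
The plan is to reduce the conjecture to the case of lattice vertex operator algebras, where the combination of Theorem~\ref{conway theorem} and Soul\'e's result that $\H^4(\mathrm{GL}(N,\bZ),\bZ) \cong \bZ_{24} \oplus \bZ_2^2$ for $N \gg 0$ (cited above from \cite{MR753419}) already supplies the $24$-divisibility. For a lattice VOA $V_L$ with $L$ even unimodular of rank $N$, any finite $G \subset \Aut(V_L)$ defined over $\bZ$ factors through $\Aut(L) \subset \mathrm{GL}(N,\bZ)$, up to the continuous torus of inner automorphisms which does not contribute to $\omega_V$. Examples~\ref{example permutation} and~\ref{example fer 24} then identify $\omega_V$ with the pullback of $\frac{p_1}{2}$ from $B\mathrm{GL}(N,\bZ)$, so its order divides~$24$.

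The main step, and the main obstacle, is the reduction from a general holomorphic $V$ to the lattice case. A natural structural attempt is to present every pair $(V,G)$ defined over $\bZ$ as an abelian orbifold of some $(V_L, \tilde G)$ along a normal subgroup $A \trianglelefteq \tilde G$ of $\bZ$-defined inner automorphisms with $\tilde G/A = G$, and then to use naturality of the anomaly under orbifolding together with inflation-restriction for $1 \to A \to \tilde G \to G \to 1$ to transfer the $24$-torsion bound from $B\tilde G$ down to $BG$. This is consistent with Schellekens' $c=24$ classification and with the proof of Theorem~\ref{moonshine theorem} (where a T-dual of $(V^\natural, \bM)$ is realized on the Leech lattice CFT), but is not established in general. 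An alternative more in the spirit of the present paper would be to apply the finite-group T-duality of Section~\ref{t-duality} iteratively, using the integrality hypothesis to produce enough invariant integral sub-VOAs on which to perform successive T-dualities and eventually land on a pair to which Theorem~\ref{conway theorem} or Soul\'e's theorem directly applies.

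The hardest part, regardless of approach, will be formalizing ``defined over $\bZ$'' for the pair $(V,G)$ so that the hypothesis is both stable under the orbifolds and dualities used in the reduction and strong enough to pin down a degree-$4$ integral invariant rather than merely integrality of characters or genus-one data. I expect that what is needed is an integral lift of the full $G$-equivariant modular tensor structure on the category of $V$-modules, together with an analogue of Soul\'e's theorem for the classifying stack of such integral structures; short of that, the conjecture likely has to be approached one family of examples at a time, as is done here for moonshine.
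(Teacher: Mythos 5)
This statement is posed in the paper as an open conjecture: no proof is offered, and the paper explicitly declines even to make the hypothesis ``defined over $\bZ$'' precise. So there is no proof of record to compare yours against; what you have written is a proof plan whose main steps you yourself flag as unestablished, and it should be judged as such rather than as a proof.

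Within the plan, one step is actually wrong, not merely missing. You assert that for a lattice CFT $V_L$ the continuous torus of inner automorphisms ``does not contribute to $\omega_V$,'' so that the anomaly of any finite $G \subset \Aut(V_L)$ defined over $\bZ$ is pulled back from $\rO(L) \subset \mathrm{GL}(N,\bZ)$ and is therefore a fractional Pontryagin class. Section~\ref{closing remarks} says the opposite: the anomaly of the subgroup $\rU(1)^{24} \cong \widehat{\Lambda} \subset \Aut(V_\Lambda)$ is the Leech pairing viewed as a class in $\H^4(\rB\rU(1)^{24},\bZ) = \Sym^2(\bZ^{24})$, and its restriction $\omega_0$ to the finite subgroup $2^{24}$ is nonzero --- it is exactly what $\omega^\natural|_{2^{1+24}}$ pulls back from. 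More generally, for a finite cyclic subgroup of the torus the anomaly is a mod-$n$ reduction of the quadratic form, whose order grows with $n$; the only thing that can rescue a uniform bound of $24$ is the integrality hypothesis restricting which such subgroups are admissible, which is precisely the part you have not formalized. Relatedly, Examples~\ref{example permutation} and~\ref{example fer 24} concern permutation orbifolds and free fermions, not lattice CFTs, so they do not supply the identification of $\omega_V$ with $\frac{p_1}2$ that your first paragraph invokes. Finally, the reduction of a general integral holomorphic $V$ to the lattice case --- whether via a Schellekens-type classification (which exists only at $c=24$ and does not obviously respect integral forms) or via iterated T-duality --- is, as you concede, not established, and it is really the entire content of the conjecture. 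Your closing assessment of what would be needed (an integral lift of the equivariant modular data together with an analogue of Soul\'e's bound for its classifying object) is a reasonable diagnosis of why the paper leaves this as a conjecture rather than a theorem.
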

We will not try to make precise in this paper what it should mean to say that a conformal field theory is ``defined over $\bZ$.'' Almost surely the correct notion is that of a vertex operator algebra equipped with an \define{integral form} as in \cite{MR2928458}. However, for technical reasons we do not model conformal field theories in this paper as vertex operator algebras, using instead conformal nets of von Neumann algebras (see Section~\ref{why conformal nets}), and we do not claim to know what a ``conformal net over~$\bZ$'' should~be.

As discussed in \cite{MO69222,MR3539377}, one can predict the order of the moonshine anomaly $\omega^\natural$ using vertex operator algebraic methods familiar to moonshine theorists. 
  We do not follow that route: the calculations in this paper depend instead on methods native to the cohomology of finite groups (e.g.\ maximal subgroups, spectral sequences) and to topological field theory (e.g.\ fusion categories). The inputs we use from conformal field theory are the existence of $V^\natural$, and hence of $\omega^\natural$, and its relationship to the Leech lattice conformal field theory~$V_\Lambda$. Specifically, there is a well-known~\cite{MR554399} agreement
   between centralizers of certain elements of $\bM$ and of certain elements of $\Aut(V_\Lambda) = \hom(\Lambda,\rU(1)).\Co_0$, which we will understand as an example of \define{finite group T-duality} introduced in Section~\ref{t-duality}.
T-duality allows information about anomalies to be moved between conformal field theories related by a cyclic orbifold. We use it extensively in the proof of Theorem~\ref{moonshine theorem}.

Our proof of Theorem~\ref{moonshine theorem} avoids computing much about the group $\H^3(\bM,\rU(1))$. Nevertheless, it is very tempting to speculate the following analog of Theorem~\ref{conway theorem}:

\begin{conjecture} \label{H4M conjecture}
  $\H^3(\bM,\rU(1)) \cong \bZ_{24}$.
\end{conjecture}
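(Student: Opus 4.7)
The plan is to match the lower bound $\bZ_{24}\hookrightarrow\H^3(\bM,\rU(1))$ provided by Theorem~\ref{moonshine theorem} with an upper bound of order $24$, computed prime-by-prime. For $p=11$ and $p\geq 17$ the full $p$-primary part already vanishes by the arguments of Section~\ref{large primes}. What remains is to eliminate $p$-torsion for $p\in\{5,7,13\}$, to bound the $3$-primary part by $\bZ_3$, and to bound the $2$-primary part by $\bZ_8$. Combined with the lower bound and the Chinese remainder theorem, this would force $\H^3(\bM,\rU(1))\cong\bZ_8\oplus\bZ_3\cong\bZ_{24}$.

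For $p\in\{5,7,13\}$ the plan is to upgrade the arguments of Section~\ref{small primes} from statements about $\omega^\natural$ to statements about the full $p$-primary subgroup. The standard tool is the Cartan--Eilenberg stable-element description: the image of $\H^3(\bM,\rU(1))\to\H^3(P,\rU(1))$ under restriction to a Sylow $p$-subgroup $P\leq\bM$ consists of the classes that are $N_\bM(P)$-invariant and compatible with conjugation across intersections with $\bM$-conjugates of $P$. For these three primes the Sylow structure is tractable, and one should show that the normalizer action on degree-$3$ cohomology already cuts out the trivial class.

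The main technical content is at $p=2$ and $p=3$, where I would work inside a maximal $p$-local subgroup of $\bM$ that contains a Sylow $p$-subgroup. For $p=2$ the natural choice is the $2B$-centralizer $N = 2^{1+24}.\Co_1$; since $N$ contains a Sylow $2$-subgroup, the restriction map $\H^3(\bM,\rU(1))_{(2)}\to\H^3(N,\rU(1))_{(2)}$ is injective. A Lyndon--Hochschild--Serre spectral sequence for the extraspecial extension $2^{1+24}\to N\to\Co_1$, fed by Theorem~\ref{conway theorem} (whose $2$-primary part is $\bZ_8$), should pin $\H^3(N,\rU(1))_{(2)}$ down to $\bZ_8$, after which one checks that every such class is already $\bM$-stable. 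The $p=3$ case follows the same template with the $3$-local $3^{1+12}.2.\Suz$ in place of~$N$.

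The hardest step will almost certainly be the $p=2$ computation. The mod-$2$ cohomology of $2^{1+24}$ is intricate, and tracking $\Co_1$-invariants together with LHS differentials through degree~$3$ requires careful bookkeeping, especially for classes that mix the extraspecial center with $\Co_0$-cohomology. A more elegant route, which I would pursue in parallel, is to invoke the finite-group T-duality of Section~\ref{t-duality} together with the classical centralizer matching between $V^\natural$ and $V_\Lambda$ at $2B$-elements to transfer Theorem~\ref{conway theorem} directly into a statement about $\H^3(\bM,\rU(1))$---provided T-duality can be promoted to a functorial isomorphism at the level of group cohomology of centralizer subgroups, which would bypass the Sylow-theoretic computation entirely.
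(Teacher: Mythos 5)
This statement is a conjecture, not a theorem of the paper: the author explicitly declines to prove it, and Section~\ref{closing remarks} is devoted to explaining why the computations your plan requires are currently out of reach. Your proposal correctly identifies the only plausible strategy (match the lower bound $\bZ_{24}\hookrightarrow\H^3(\bM,\rU(1))$ coming from Theorem~\ref{moonshine theorem} against a Sylow-by-Sylow upper bound), and the large-prime part is indeed already done in Section~\ref{large primes}. But everything after that is a statement of intent rather than an argument, and the intent runs directly into obstacles the paper names.

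The decisive gap is at $p=2$. You assert that the LHS spectral sequence for $2^{1+24}.\Co_1$, ``fed by Theorem~\ref{conway theorem},'' should pin $\H^3(2^{1+24}.\Co_1,\rU(1))_{(2)}$ down to $\bZ_8$. It does not, for two reasons. First, the paper's own bound (Lemma~\ref{even part 1}) that $8\omega^\natural|_G$ is pulled back from $\Co_1$ uses the fact that $\omega^\natural|_{2^{1+24}}$ has order $2$ because it implements a T-duality; a general class in $\H^3(2^{1+24}.\Co_1,\rU(1))$ can restrict to an order-$4$ class on $2^{1+24}$ (the group $\H^3(2^{1+24},\rU(1))$ has exponent $4$), so only ``$16\alpha$ is pulled back from $\Co_1$'' holds generically. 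Second, to cut the $E_2$-page down to $\bZ_8$ one must control $\H^0(\Co_1,\H^3(2^{1+24},\rU(1)))$, $\H^2(\Co_1,2^{24})$, and the $d_2$ out of $\H^1(\Co_1,2^{274}.2)\cong\bZ_2$ into $\H^3(\Co_1,2^{24})$ or $\H^4(\Co_1,\rU(1))$ --- and the paper states that $\H^2(\Co_1,2^{24})$ requires a complete list of syzygies for $\Co_1$ (``currently inaccessible'') and that the target groups of that differential ``seem far beyond what current computer power can handle.'' Note also that Theorem~\ref{conway theorem} concerns $\Co_0=2.\Co_1$, not the relevant coefficient modules over $\Co_1$, so it does not ``feed'' this spectral sequence in the way you suggest.

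The same pattern recurs at the odd primes. At $p=3$ the paper bounds only the restriction of $\omega^\natural$: the corner term $\H^0(2\Suz,\H^3(3^{1+12},\rU(1)))$ is killed for $\omega^\natural$ because T-duality forces $\omega^\natural|_{3^{1+12}}$ to be pulled back from $3^{12}$, not because that invariant group vanishes for arbitrary classes. At $p=5,7,13$ the argument likewise traces the specific class $\omega^\natural$ back to $2^{24}.\Co_0$ and invokes Theorem~\ref{conway theorem}; it does not compute $\H^3(\bM,\rU(1))_{(p)}$, and your appeal to Cartan--Eilenberg stability is a correct framing but no computation is offered. Finally, your ``more elegant route'' of promoting T-duality to an isomorphism of cohomology groups cannot work as stated: T-duality as defined in Section~\ref{t-duality} is a correspondence of pairs (extension, anomaly class) for centralizer subgroups, not a map between $\H^3(\bM,\rU(1))$ and $\H^3(\Co_0,\rU(1))$, and in particular it says nothing about classes of $\bM$ not supported on the relevant $p$-locals in the required way.
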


We address this conjecture in Section~\ref{closing remarks} without providing much evidence to support it. In Section~\ref{section not c2}, we show that Conjecture~\ref{H4M conjecture} implies:

\begin{conjecture} \label{c2=0 conjecture}
  For every complex representation $V$ of $\bM$, $$c_2(V) = 0 \in \H^3(\bM,\rU(1)).$$
\end{conjecture}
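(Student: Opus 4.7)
Under Conjecture \ref{H4M conjecture}, the plan is to package the Chern classes of complex $\bM$-representations as a subgroup of $\bZ_{24} \cong \H^3(\bM,\rU(1))$, eliminate the possibility that this subgroup is all of $\bZ_{24}$ using the non-realization part of Theorem \ref{moonshine theorem}, and then whittle it down to zero by a prime-by-prime restriction argument.

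First I would exploit that $\bM$ is perfect: $\H^2(\bM,\bZ) = 0$ forces $c_1(V) = 0$ for every complex $\bM$-representation $V$, so the Whitney sum formula collapses to $c_2(V \oplus W) = c_2(V) + c_2(W)$. Hence $c_2$ descends to a group homomorphism $R(\bM) \to \H^3(\bM,\rU(1))$ with image some subgroup $H \leq \bZ_{24}$. Because the target is torsion, additive inverses of actual (not merely virtual) Chern classes are already realized by direct summation, so $H$ coincides with the set $\{c_2(V) : V \text{ a complex representation of } \bM\}$. By Theorem \ref{moonshine theorem}, $\omega^\natural$ has order exactly $24$ and so generates $\bZ_{24}$; since it is not a Chern class, $\omega^\natural \notin H$, forcing $H$ to be a proper subgroup.

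The hard part will be strengthening ``$H$ proper'' to ``$H = 0$'', since Theorem \ref{moonshine theorem} on its own only rules out $H = \bZ_{24}$: each of the proper subgroups of orders $2,3,4,6,8,12$ already misses $\omega^\natural$. I would attack this by a prime-by-prime restriction analysis. Writing $\bZ_{24} \cong \bZ_8 \oplus \bZ_3$ via CRT, it suffices to show $H_{(p)} = 0$ for each $p \in \{2,3\}$. For $p = 3$, where $\H^3(\bM,\rU(1))_{(3)} \cong \bZ_3$ has no proper nontrivial subgroup, it is enough to rule out $H_{(3)} = \bZ_3$: I would restrict a hypothetical $c_2(V)$ with nontrivial $3$-part to a suitable $3$-local subgroup of $\bM$ (for instance a Sylow $3$-subgroup, or the normalizer of an element of class $3A$) where $\H^4$ is tractable, and use the $3$-local data from Section \ref{small primes} (which already pinned down the $3$-part of $\omega^\natural$) to derive a contradiction.

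For $p = 2$ the setup is analogous but more delicate, since $\bZ_8$ admits a chain of proper subgroups and one must control $H_{(2)}$ level by level. Here I would parallel the $2$-local analysis of Section \ref{monster 2}, using a $2$-Sylow or a carefully chosen $2$-local subgroup to detect $c_2$ of a hypothetical witness representation and rule out nontrivial $H_{(2)}$ step by step. If both $H_{(2)}$ and $H_{(3)}$ vanish, then $H = 0$ and Conjecture \ref{c2=0 conjecture} follows. The principal obstacle is the $2$-local step, which I expect to require the same fusion-theoretic and spectral-sequence input that drove the order computation of $\omega^\natural$ in Section \ref{monster 2}; a more conceptual argument that bypasses this case-by-case $2$-local bookkeeping would be desirable but I do not see one.
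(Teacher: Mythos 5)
Your overall strategy --- treat $c_2$ as a homomorphism $R(\bM)\to\H^3(\bM,\rU(1))$ and kill its image prime by prime under Conjecture~\ref{H4M conjecture} --- is the right one, and your opening observation that the image is a proper subgroup of $\bZ_{24}$ is correct but does essentially no work (every proper subgroup misses the generator $\omega^\natural$, as you yourself note). The genuine gap lies in both local steps, where the two decisive ingredients are left unidentified. First, the choice of detecting subgroups: you propose a Sylow $3$-subgroup (of order $3^{20}$) or the normalizer of a $3\rA$-element, whose degree-four cohomology is far out of reach, and you expect the $2$-local step to require redoing the spectral-sequence analysis of Section~\ref{monster 2}. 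Neither is needed. Under Conjecture~\ref{H4M conjecture} the class $\omega^\natural$ generates $\H^3(\bM,\rU(1))$, so restriction to \emph{any} subgroup is injective on the $p$-part as soon as $\omega^\natural$ restricts to that subgroup with full $p$-part order --- and this has already been established for two tiny subgroups: $\omega^\natural|_{2D_8}$ has order $8$ (Lemma~\ref{even part 2}) and $\omega^\natural|_{\langle 3\rC\rangle}\neq 0$ (Lemma~\ref{omega 3c}). So the conjecture hands you injectivity of $\H^3(\bM,\rU(1))_{(2)}\to\H^3(2D_8,\rU(1))$ and of $\H^3(\bM,\rU(1))_{(3)}\to\H^3(\langle 3\rC\rangle,\rU(1))$ for free, with no new local cohomology computation; this is exactly the ``more conceptual argument'' you say you do not see.

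Second, and more substantively, your plan never supplies the vanishing of $c_2(V)$ on the detecting subgroups, which is where the real content lives. For $2D_8$ this is Lemma~\ref{restrictions of c2s vanish}: the fusion of the seven $2D_8$-conjugacy classes into only four $\bM$-classes (Lemma~\ref{merging of conjugacy classes}) forces the multiplicities of the two-dimensional $2D_8$-irreps in any restricted $\bM$-representation to be divisible by $8$ and $16$ respectively (Lemma~\ref{multiples of 8}), and combining this with the explicit values $c_2(V_4)=4$, $c_2(V_5)=9$, $c_2(V_2\oplus V_3)=0$, and $c_1(V_1)^2=0$ in $\H^4(2D_8,\bZ)\cong\bZ_{16}$ yields $c_2(V)|_{2D_8}=0$. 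For $\langle 3\rC\rangle$ the vanishing is a character-table computation. Without these computations, or some substitute for them, the argument does not close; with them, the deduction is three lines, and in particular no further fusion-theoretic or spectral-sequence input beyond what was already proved about $\omega^\natural$ is required.
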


\subsection{Notation and conventions}

We mostly follow the ATLAS \cite{ATLAS} for notation for finite groups. For example, when referring to a group, ``$n$'' denotes the cyclic group of order $n$. We also call this group $\bZ_n$, which is its standard name in physics texts, and $\bF_n$ when $n$ is prime and we are thinking of it as a field. (Mathematicians sometimes use $\bZ_n$ to denote instead the ring of $n$-adic integers.) Elementary abelian groups are denoted $n^k$ and extraspecial groups $n^{1+k}$. An extension with normal subgroup $N$ and cokernel $G$ is denoted $N.G$ or occasionally $NG$; an extension which is known to split is written $N:G$. The conjugacy classes of elements of order $n$ in a group $G$ are named $n\rA$, $n\rB$, \dots, ordered by increasing size of the class (decreasing size of the centralizer).

The \define{exponent} of a finite group $G$ is the smallest $n$ such that $g^n = 1$ for all $g\in G$.
If $G$ is a finite group and $A$ a module thereof, we write $\H^\bullet(G,A)$ for the group cohomology of $G$ with coefficients in $A$; this should cause no confusion, since the cohomology of $G$-as-a-space is trivial.
When $G$ is a Lie group, we write $\H^\bullet(\rB G)$ to emphasize that the cohomology we consider depends just on the classifying space $\rB G$ of $G$.
We primarily use $\rU(1)$-coefficients instead of the more mathematically-common $\bZ$-coefficients because the former is more physically meaningful. Of course for a finite group $\H^\bullet(G,\rU(1)) \cong \H^{\bullet+1}(G,\bZ)$ is finite of exponent dividing the order of $G$. 

The \define{Pontryagin dual} of a possibly-infinite abelian group $A$ is $\widehat{A} = \hom(A,\rU(1))$. We will extend the notation $\widehat{(-)}$ in Section~\ref{t-duality}; the meaning will be clear from context.

\subsection{Acknowledgments}

I discussed substantial portions of this work with David Treumann, who pointed out that $\omega^\natural$ might not be a Chern class, and with Marcel Bischoff, who told me a separate conformal-net-theoretic proof of Example~\ref{example permutation} using the main results of  \cite{MR1410566,MR2100058}. I thank both of them for their time and attention.
 Leonard Soicher kindly shared with me matrices satisfying his presentation for $\Co_1$; these matrices 
 were first computed by Richard Parker and
 are reproduced in Appendix~\ref{Soicher matrices}.
 I would also like to thank Lakshya Bhardwaj, Richard Borcherds, Kevin Costello, Davide Gaiotto, Nora Ganter, Andre Henriques, David Jordan, and Alex Weekes for helpful suggestions and discussions, and the referees for their valuable comments. Research at the Perimeter Institute for Theoretical Physics is supported by the Government of Canada through the Department of Innovation, Science and Economic Development Canada and by the Province of Ontario through the Ministry of Research, Innovation and Science.

\section{Orbifolds of conformal field theories} \label{orbifolds}

In this section, we first review the basic theory of orbifolds of conformal field theories and the corresponding anomalies, beginning with the physical description in Section~\ref{physical picture} and then mentioning the appropriate mathematical details in Section~\ref{why conformal nets}. 
We then introduce a ``T-duality'' for finite groups in Section~\ref{t-duality}. In Section~\ref{super case} we discuss anomalies and T-duality when fermions are present.

\subsection{Physical picture} \label{physical picture}

Before focusing on a precise mathematical model, it is worth recalling the physical meaning of (``gauge'' aka ``orbifold'' aka ``'t Hooft'') anomalies. Much of this story originated in \cite{MR1003430,MR1128130} and is well known to experts; I learned the picture of orbifolds as a sandwich construction from K.\ Costello and D.\ Gaiotto, who said that it is not due to them but rather ``in the air'' and who were unable to provide sharper references.
It is a special case of the construction detailed in the series \cite{MR1940282,MR2026879,MR2076134,MR2137114,MR2259258}.

Suppose that one has a $d$-spacetime-dimensional quantum field theory $A$ and a finite group~$G$ acting on~$A$ by global symmetries. (We will care about conformal field theory in dimension $d=2$.) One can ask to ``gauge'' the $G$-symmetry --- to integrate over the choice of $G$ gauge field. This requires choosing a measure of integration for the gauge field. If $A$ were absent, one could choose a Dijkgraaf--Witten type measure as in \cite{MR1048699,MR1240583}, parameterized by a $\rU(1)$-cohomology class $[\alpha] \in \H^d(G,\rU(1))$. 

Why are Dijkgraaf--Witten actions parameterized by cohomology classes? Any degree-$d$ cochain $\alpha \in \rC^d(G,\rU(1))$ provides a putative measure of integration, but consistency under gauge symmetry forces $\alpha$ to be a cocycle, and cohomologous choices of $\alpha$ determine equivalent gauged theories. 
Actually, as emphasized in \cite{MR1240583}, the cohomology class $[\alpha]$ determines the QFT up to isomorphism, but not up to canonical isomorphism: to give an isomorphism between the $\alpha$ and $\alpha'$ theories requires providing a solution to $\d\beta = \alpha - \alpha'$, modulo varying $\beta \leadsto \beta + \d\gamma$.
We will therefore adopt the following mildly abusive notation. We will speak just about the class $[\alpha] \in \H^d(G,\rU(1))$, since that class determines the isomorphism type of the Dijkgraaf--Witten theory, but we will understand that extra data is needed in order to give a trivialization of a trivializable class $[\alpha]$. This extra data lives in a torsor for $\H^{d-1}(G,\rU(1))$, which vanishes in many of the cases where we will actually need a trivialization of $[\alpha]$.

(There is one further correction necessary, which will not matter for this paper. Namely, the most  general pure gauge theories are not parameterized by ordinary cohomology but rather by some generalized cohomology theory. For example, in a spin Dijkgraaf--Witten theory $\alpha$ can fail to be an ordinary cocycle if that failure is accommodated by a term that couples to the spin structure.) 

In the presence of the quantum field theory $A$, however, the same logic does not force $\d\alpha = 0$. Rather, it forces $\d\alpha = \omega$, where $\omega$ is some degree-$(d+1)$ cocycle determined by the way that $G$ acts on $A$
(as well as some auxiliary choices of ``coordinates'' for the action of $G$ on $A$; under changes of these ``coordinates,'' $\omega$ changes by the derivative of an explicit $d$-cochain).
 In particular, if $\omega$ represents a nontrivial class in cohomology, then the $G$-symmetry cannot be gauged. This cohomology class is called the  \define{anomaly} of the action of $G$ on $A$. Given a choice of trivialization of the anomaly, the resulting gauged theory will be denoted $A \sslash G$. Gauging is also called ``orbifolding'' for the following reason. Suppose $A$ quantizes a nonlinear sigma model with target $X$, and the $G$-action on $A$ comes from an action of $G$ on $X$. Then the gauged theory $A\sslash G$ quantizes a sigma model with target the orbifold $X\sslash G$.

There is a geometric way of thinking about anomalies that will be useful and has the benefit of clarifying exactly where anomalies live. To motivate it, recall the notion of ``chiral halves'' of a conformal field theory. Any quantum field theory, for example a conformal field theory, has an ``algebra'' of local observables, where ``algebra'' is in quotes because different axiomatizations (let alone different types of quantum field theories) involve different types of algebraic operations. In general quantum field theories are not determined by their algebras of local observables, but two-dimensional quantum field theories usually are. Suppose that $A$ is a two-dimensional conformal field theory. Its algebra of local observables, which we will in an abuse of notation also denote by $A$, has two subalgebras $V$ and $W$ consisting respectively of the \define{chiral} and \define{antichiral} observables. The theory $A$ is \define{heterotic} when $V \neq W$, and \define{holomorphic} when $W$ is trivial.

Just as every harmonic function is a sum of holomorphic and antiholomorphic pieces, so too is $A$ generated by its subalgebras $V$ and $W$, but just as there's a constant ambiguity in the choice of splitting of a function, so too are there relations between $V$ and $W$ in $A$.  To describe these relations involves moving briefly into three dimensions.

The algebras $V$ and $W$ are each algebras of local observables, but they do not typically define full quantum field theories the way $A$ does. Instead, each of $V$ and $W$ describes a (chiral or antichiral) boundary condition for a three-dimensional ``bulk'' quantum field theory, canonically determined by the boundary chiral algebra, and $A$ is the result of sandwiching a thin piece of the bulk QFT between the boundary conditions $V$ and $W$:
$$
\begin{tikzpicture}[baseline=(A)]
  \path[draw=black,fill=gray,opacity=.75] (0,0) -- (2,1) -- (2,4) -- (0,3) -- cycle;
  \path (1,2) node {$A$};
  \path (0,2) coordinate (A);
\end{tikzpicture}
\quad = \quad
\begin{tikzpicture}[baseline=(A)]
  \draw[->,very thick] (-.5,1) node (V) {$V$} (V) .. controls +(0,.5) and +(-.5,.5) .. (.75,1.75);
  \path[fill=gray,opacity=.5] (0,0) --  (2,1) -- (2,4) -- (0,3) -- (0,0);
  \path[draw=black] (0,0) --  (2,1) -- (2,4) -- (0,3) --  (0,0);
  \draw[->,very thick] (.5,4.5) node (Z) {3d bulk QFT} (Z) .. controls +(0,-1) and +(0,1) .. (1.25,3.5);
  \path[fill=gray,opacity=.5] (.5,0) -- (2.5,1) -- (2.5,4) -- (.5,3) -- cycle;
  \path[draw=black] (.5,0) -- (2.5,1) --  (2.5,4) -- (.5,3) -- cycle;
  \draw[->,very thick] (3,2.5) node (W) {$W$} (W) .. controls +(0,-.5) and +(.5,-.5) .. (1.5,2);
  \path (0,2) coordinate (A);
\end{tikzpicture}
$$

The best situation is when $V$ is ``rational'' (the name comes from the case of a string moving in $\bR/2\pi R\bZ$: the chiral algebra for such a theory is rational iff $R$ is), in which case the three-dimensional bulk theory is a topological quantum field theory of Reshetikhin--Turaev type with modular tensor category of line operators equal to the modular tensor category of vertex modules for $V$.  Here and throughout this section, we will write $\cZ(V)$ for the bulk quantum field theory for which $V$ describes a boundary condition. 
Reshetikhin--Turaev type TQFTs are determined by their MTCs of line operators, and so, provided $V$ is rational, we can write $\cZ(V)$ both for the 3d TQFT admitting $V$ as a boundary condition and also for the MTC of vertex modules for $V$.

To justify the letter ``$\cZ$,'' consider the case when $V$ is not chiral but rather topological. Then $V$ is fully determined by its fusion category of line defects, and so we will write ``$V$'' also for that fusion category. In this case, 
the three-dimensional bulk theory is the Turaev--Viro type TQFT determined by the fusion category $V$.
Turaev--Viro type TQFTs are a special case of Reshetikhin--Turaev type TQFTs; the MTC defining the Reshetin--Turaev theory is the Drinfel'd center of $V$, usually denoted $\cZ(V)$, hence the more general notation above \cite[Theorem 17.1]{MR3674995} (see also \cite{TuraevVirelizier,MR3093932}).
Indeed, a Turaev--Viro type TQFT depends only on the Morita equivalence class of its input fusion category $V$, which in turn can be reconstructed from the Drinfel'd center $\cZ(V)$ by \cite[Theorem 3.1]{MR2735754}, so the notation is fully justified in this case. (Another justifying example occurs one dimensional lower. Suppose $V$ is a separable symmetric Frobenius algebra. Then one can build a two-dimensional TQFT from $V$ determined by the condition that it admits a boundary condition with boundary observables in the algebra $V$; the bulk theory is described by the center of the algebra $V$.)

Combining the chiral and topological cases, one way to construct holomorphic two-dimensional conformal field theories is to find a chiral algebra $V$ and a fusion category $W$ and an identification $\cZ(V) \cong \cZ(W)$ (of 3d TQFTs or equivalently of modular tensor categories).
This is how we will  construct the orbifold  $A\sslash G$, where $A$ is a holomorphic conformal field theory with a $G$-action. The idea is to arrange a sandwich where the bulk theory is a pure $G$-gauge theory, and where the boundary condition ``$V$'' looks like $A$ together with ``Neumann'' boundary conditions for the bulk gauge fields. The boundary values of the gauge fields should of course couple to the $A$-system following the data of the action of $G$ on $A$. The boundary local observables for such a bulk-boundary system are precisely the $G$-fixed points in the algebra $A$, and so we will write $V = A^G$. This notation is particularly justified when the $G$ action on $A$ is faithful, as then this boundary condition is determined by its algebra of local observables, and the bulk theory $\cZ(A^G)$ is determined by the boundary chiral algebra $A^G$. Since $\cZ(A^G)$ is a pure gauge theory in three dimensions, it is a Dijkgraaf--Witten theory for some class $\omega \in \H^3(G,\rU(1))$, which we will see is precisely the anomaly described earlier. The chiral algebra $A^G$ is called the \define{chiral orbifold} of~$A$~by~$G$.

What about the boundary condition ``$W$''? Pure gauge theory always admits a ``Dirichlet'' boundary condition. Indeed, Dijkgraaf--Witten theory for $\omega \in \H^3(G,\rU(1))$ has as its category of (bulk) line defects the Drinfel'd center $\cZ(\Vect^\omega[G])$, where $\Vect^\omega[G]$ denotes the grouplike fusion category whose simple objects are indexed by $G$ and whose associator is $\omega$ (see \cite[Example 2.3.8]{EGNO}). The pure-Dirichlet boundary condition is the topological boundary condition whose boundary line defects are in $\Vect^\omega[G]$ itself.  Suppose that $\omega = 0$ is trivialized. Such a choice of trivialization determines a second topological boundary condition via the equivalence $\cZ(\Vect^{\omega=0}[G]) \cong \cZ(\cat{Rep}(G))$; this second  boundary condition is ``pure Neumann.'' 
If one imposes both Dirichlet and Neumann boundary conditions on the same field, that field simply evaporates from the quantum field theory, and so the sandwich of $A^G$ with $\Vect^\omega[G]$ is simply $A$. On the other hand, imposing Neumann boundary conditions twice is the same as doing the path integral. By definition, the \define{full orbifold} $A\sslash G$ is the sandwich of $A^G$ and $\cat{Rep}(G)$; note that it can be constructed only when the anomaly $\omega$ has been trivialized.

There is a Morita equivalence of fusion categories $\Vect^{\omega=0}[G] \simeq \cat{Rep}(G)$ \cite[Example 3.15]{MR3077244}, which provides an invertible topological defect between Neumann and Dirichlet boundary conditions and hence between $A$ and $A\sslash G$.

$$
\begin{tikzpicture}[baseline=(A)]
  \path[draw=black,fill=gray,opacity=.75] (0,0) -- (2,1) -- (2,4) -- (0,3) -- cycle;
  \path[draw,very thick,dashed] (0,1.5) -- (2,2.5);
  \path (1,2.75) node {$A$};
  \path (1,1.25) node {$A\sslash G$};
  \path (0,2) coordinate (A);
\end{tikzpicture}
\quad = \quad
\begin{tikzpicture}[baseline=(A)]
  \draw[->,very thick] (-1.5,1) node (V) {$\begin{array}{c} A^G \\ \rotatebox{90}{$=$} \\ A + \text{Neumann}\end{array}$} (V) .. controls +(0,1.5) and +(-.5,.5) .. (.75,2);
  \path[fill=gray,opacity=.5] (0,0) --  (2,1) -- (2,4) -- (0,3) -- (0,0);
  \path[draw=black] (0,0) --  (2,1) -- (2,4) -- (0,3) --  (0,0);
  \draw[->,very thick] (3,4.5) node (Z) {$\cZ(A^G) \cong \cZ(\Vect[G]) \cong \cZ(\cat{Rep}(G))$} (Z) .. controls +(0,-1) and +(0,1) .. (1.25,3.5);
  \path[fill=gray,opacity=.5] (.5,0) -- (2.5,1) -- (2.5,4) -- (.5,3) -- cycle;
  \path[draw=black] (.5,0) -- (2.5,1) --  (2.5,4) -- (.5,3) -- cycle;
  \path[draw,very thick,dashed] (.5,1.5) -- (2.5,2.5);
  \path (0,2) coordinate (A);
  \draw[->,very thick] (4.5,3.5) node (W) {Dirichlet $= \Vect[G]$} (W) .. controls +(0,-1) and +(.5,0) .. (1.5,2.75);
  \draw[->,very thick] (4.5,2) node (W) {Neumann $= \cat{Rep}[G]$} (W) .. controls +(0,-1) and +(.5,0) .. (1.5,1.25);
\end{tikzpicture}
$$

The description of $A^G$ as a conformal boundary condition for $\cZ(\Vect^\omega[G])$ explains the relationship between anomalies and the multipliers for twisted-twining genera \cite[Section 3]{MR3108775}. Indeed, following \cite{MR2500561}, let $\cM = \cM_{1}$ denote the moduli stack of elliptic curves and let $\cM_G$ denote the moduli stack of elliptic curves equipped with a principal $G$-bundle, and let $L_\omega$ denote the line bundle thereon constructed from $\omega$. The Hilbert space that the Dijkgraaf--Witten theory $\cZ(\Vect^\omega[G])$ assigns to an elliptic curve $E$ is the fiber over $E \in \cM$ of the pushforward of $L_\omega$ along $\cM_G \to \cM$. The ``twisted-twining genera'' are the (genus-one) conformal blocks of $A^G$; abstract nonsense of boundary field theories says that they are  sections of $L_\omega$. 
Consider the point $x  \in \cM_G$ corresponding to the elliptic curve $E_\tau = \bC / (\bZ \oplus \tau \bZ)$ equipped with the $G$-bundle that is trivial in the $\bZ$ direction and has monodromy $g\in G$ in the $\tau\bZ$-direction. The corresponding twisted-twining genus is $g$-twined but untwisted; as a function of $q = \exp(2\pi i \tau)$, it is the graded character of the action of $g$ on $A$. Suppose $g$ has order $n$. Then  the based fundamental group $\pi_1(\cM_G,x)$ for this point $x \in \cM_G$ contains a subgroup generated by $ST^nS^{-1}$, where $S,T \in \mathrm{SL}(2,\bZ)$ are the standard generators. As explained in \cite[\S 3.3]{MR3108775}, the monodromy along this loop in the line bundle $L_\omega$ --- the ``multiplier'' for the action of $ST^nS^{-1}$ on the character of $g$ --- equals the value of $\omega$ when evaluated on the $3$-cycle represented by $\sum_{k=0}^{n-1} g \otimes g^k \otimes g$.

\begin{example}\label{example permutation}
  Suppose that $V$ is a holomorphic conformal field theory and consider the permutation action of the symmetric group $G = S_n$ on $A = V^{\otimes n}$. It is commonly believed (see e.g.\ the conjecture of M\"uger \cite[Appendix 5, Conjecture 6.3]{MR2674592} or the remark on p.\ 2 of \cite{Davydov2013}) that such permutation actions are nonanomalous. This belief is false. The correct statement is that the anomaly is $c p_1$, where $c$ is the central charge of $V$ and $p_1 \in \H^3(S_m,\rU(1))$ is the first Pontryagin class of the permutation representation of $S_n$.  
  When $n \geq 6$, the class $p_1$ has order $12$ in $\H^3(S_n,\rU(1)) \cong \bZ_{12} \oplus \bZ_2^2$ \cite[Theorem~7.1]{MR878978} (when $n\leq 6$, the order of $p_1$ divides but can be less than $12$), and, for bosonic conformal field theories, $c$ is divisible by $8$, so the anomaly has order dividing $3$. It is nontrivial, for example, for $V = E_{8,1} = V_{E_8}$, the lattice conformal field theory corresponding to the $E_8$ lattice.
  
  To see that the anomaly can be nonzero,
  consider the action of an $n$-cycle $(1\dots n) \in S_n$. Let $\chi_V(q) = \tr( q^{L_0 - c/24}; V)$ denote the graded dimension of $V$. A straightforward combinatorics exercise shows:
  $$ \tr( q^{L_0 - nc/24} g; V^{\otimes n}) = \chi_V(q^n).$$
  Note that the central charge of $V^{\otimes n}$ is $nc$ if $V$ has central charge $c$. If $V$ is bosonic and holomorphic, then $S \in \mathrm{SL}(2,\bZ)$ fixes $\chi_V$ but $T$ acts on $\chi_V$ with eigenvalue $\exp(2\pi i c/24)$. Recalling that $q = \exp(2\pi i \tau)$ and that $S : \tau \mapsto -1/\tau$ and $T : \tau \mapsto \tau+1$, it is not hard to compute that $ST^nS^{-1}$ acts on $\tr( q^{L_0 - nc/24} g; V^{\otimes n})$ with eigenvalue $\exp(2\pi i c/24)$.
  
  One can summarize with the following slogan: for permutation orbifolds, the ``gauge anomaly''~$\omega$ is precisely the ``gravitational anomaly'' $\exp(2\pi i c/24)$.
\end{example}

\begin{example}
  The story above is not special to dimensions two and three: what is special in those dimensions is the description of Neumann and Dirichlet boundary conditions as different Turaev--Viro presentations of the same Reshetikhin--Turaev theory. One dimensional lower, the analog of a holomorphic conformal field theory is a separable symmetric Frobenius algebra $A$ with trivial center $\cZ(A) = \bC$.
  Such $A$ provides the boundary observables for a boundary condition for the trivial two-dimensional theory; i.e.\ a one-dimensional field theory.
   Indeed, such $A$ is necessarily a matrix algebra $A = \operatorname{End}(\bC^n)$, and the Hilbert space for the  one-dimensional  field theory is $\bC^n$. 
   
   Suppose that a finite group $G$ acts on $A = \operatorname{End}(\bC^n)$ by algebra automorphisms. 
  The anomaly $\omega \in \H^2(G,\rU(1))$ is
   the obstruction to making this action inner --- the failure for this action to descend to an action of $G$ on $\bC^n$
   --- and the orbifold theory, if $\omega = 0$, should have as its Hilbert space the $G$-fixed subspace $(\bC^n)^G$.
  Assuming the action of $G$ on $A$ is faithful, for arbitrary $\omega$, $A^G$ is Morita equivalent to the $\omega$-twisted group algebra $\bC^\omega[G]$. In particular, $\cZ(A^G) = \cZ(\bC^\omega[G])$, and $\bC^\omega[G]$ defines defines the ``Dirichlet'' boundary condition in bulk-boundary story above. When $\omega$ has been trivialized, the ``pure Neumann'' boundary condition arises from the well-known isomorphism between the algebra $\cZ(\bC^{\omega=0}[G])$ of class functions  and the algebra $R(G)$ of characters.
\end{example}

   In high dimensions, not all finite group gauge theories are of Dijkgraaf--Witten type. Rather, they arise from integrating the gauge degrees of freedom in a symmetry protected topological phase of matter. SPT phases are classified not by ordinary cohomology but rather by some generalized cohomology theory, which (if one works only with bosonic theories) agrees with ordinary cohomology in low dimensions. Some discussion of the relationship between anomalies and SPT phases can be found for example in \cite{Wen2013,GJFa}.

\subsection{Mathematical foundations} \label{why conformal nets}

There are two popular mathematical formalisms for the algebras of observables in chiral conformal field theories. A \define{vertex operator algebra} is the ``Taylor expansion'' at a point of the chiral algebra of all observables. Living in the purely algebraic world of power series, vertex operator algebras are very good for explicit computation. A \define{chiral conformal net} encodes instead those observables that are ``smeared'' along intervals in the unit circle surrounding the expansion point. 
Chiral conformal nets come from the world of von Neumann algebras and algebraic quantum field theory.
The axioms of both formalisms are widely available and will not be reviewed here.

Both formalisms provide precise definitions for adjectives like ``rational'' and ``holomorphic,'' and it is widely believed but not yet known that the rational conformal field theories in the two formalisms agree. Many special cases of their agreement are known. First, as proved in \cite[Chapter~7]{BischoffThesis}, the two formalisms are equivalent in the case when one has a chiral conformal field theory $V$ of central charge $c \in \frac12\bZ$ together with an injection $\mathrm{Vir}_{1/2}^{\otimes 2c} \hookrightarrow V$, where $\mathrm{Vir}_{1/2}$ denotes the $c=\frac12$ Virasoro algebra. Such algebras are called \define{framed} following \cite{MR1618135}, which is both well-deserved and also dangerous, since they are defined on oriented, not framed, complex curves.
The conformal field theories we will use in this paper --- the Leech lattice CFT $V_\Lambda$ and the moonshine CFT $V^\natural$ --- are framed~\cite[Examples 2.9 and 3.8]{MR2263720}, and so results proved about them in one formalism can be moved to the other. Second, 
the main purpose of \cite{CKLW} is to formulate an analytic condition on a CFT called ``strong locality'' and to show that the two formalisms agree when the CFT is strongly local.   The moonshine CFT is strongly local~\cite[Theorem 8.15]{CKLW}, and there are no rational CFTs which are known  to fail to be strongly local.

So far, the formalism of conformal nets has provided more powerful structural and category-theoretic results about conformal field theories. In particular, the orbifold/anomaly story of the previous section can be made fully rigorous in that formalism. On the vertex algebra side, the final missing piece is to show that if $V$ is rational, so is $V^G$. The full result has been announced in various conferences, and the strongest version to appear in print is the main result of \cite{CarnahanMiyamoto} covering the case when $G$ is solvable. On the conformal net side, the existence of the anomaly in $\H^3(G,\rU(1))$, together with how it controls the orbifold problem for holomorphic conformal field theories, was announced but not proven in \cite[Corollary~3.6]{MR2730815}, and is physically justified in \cite{MR1003430,MR1128130}.

\begin{proposition}[{\cite[Corollary 3.6]{MR2730815}}] \label{prop.orbifolding works}
  Let $A$ be a holomorphic conformal net and $G$ a finite group acting faithfully on $A$, and let $A^G$ denote the $G$-fixed sub conformal net. Then there is an \define{anomaly} $\omega \in \H^3(G,\rU(1))$ with the properties that the modular category $\cZ(A^G)$ of vertex modules for $A^G$ is naturally equivalent to the Drinfel'd center $\cZ(\Vect^\omega[G])$ of the pointed fusion category $\Vect^\omega[G]$ with associator determined by $\omega$. Trivializations of $\omega$ correspond to choices of full orbifold $A\sslash G$.
\end{proposition}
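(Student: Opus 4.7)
The plan is to associate to $(A,G)$ a $G$-crossed braided fusion category $\cC$ of twisted sectors, show that it is pointed of the form $\Vect^\omega[G]$ with cocycle $\omega \in \H^3(G,\rU(1))$, and then identify the DHR category $\cZ(A^G)$ of the fixed-point subnet with the $G$-equivariantization $\cC^G$, which is itself the Drinfel'd center $\cZ(\Vect^\omega[G])$.

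First, for each $g \in G$ I would construct the category $\cC_g$ of $g$-twisted DHR representations of $A$: representations on a punctured disk with $g$-monodromy around the puncture, equivalently DHR-type endomorphisms of $A$ intertwining the $G$-action up to the grading twist. The disjoint union $\cC = \bigsqcup_{g \in G}\cC_g$ inherits a $G$-crossed braided fusion structure whose trivial component is $\cC_e = \cZ(A) \simeq \Vect$, because $A$ is holomorphic. The $G$-crossed non-degeneracy of $\cC$ together with the pairing $\cC_g \otimes \cC_{g^{-1}} \to \cC_e$ forces each $\cC_g$ to contain a unique isomorphism class of simple object $V_g$. Consequently the underlying fusion category of $\cC$ is pointed with invertible objects indexed by $G$, its associator is classified by a class $\omega \in \H^3(G,\rU(1))$, and $\cC \simeq \Vect^\omega[G]$ as a fusion category.

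Next I would invoke the equivariantization correspondence: for a faithful finite-group action, the DHR category of $A^G$ is the $G$-equivariantization of the twisted-sector category of $A$. This is the $G$-crossed extension of M\"uger's subfactor Galois theorem. Combined with the standard categorical identity $(\Vect^\omega[G])^G \simeq \cZ(\Vect^\omega[G])$, one obtains
\[ \cZ(A^G) \simeq \cC^G \simeq \cZ(\Vect^\omega[G]). \]
Finally, by Longo's Q-system correspondence, an extension $A^G \subset A\sslash G$ realizing the full orbifold corresponds to a commutative \'etale algebra in $\cZ(A^G)$ Morita-equivalent to $\cat{Rep}(G)$. Such ``Lagrangian'' algebras in $\cZ(\Vect^\omega[G])$ exist precisely when $\omega$ is trivializable, and the set of them forms an $\H^2(G,\rU(1))$-torsor, matching the ``extra data'' discussed after the statement.

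The main obstacle is the equivariantization step $\cZ(A^G) \simeq \cC^G$: this is where the real operator-algebraic work happens, requiring the subfactor-theoretic extension of the Galois correspondence developed in \cite{MR1410566,MR2100058} and applied in \cite{MR2730815}. The remaining ingredients --- uniqueness of pointed $G$-crossed braided extensions of $\Vect$ by $G$ up to the parameter $\omega$, the identification of the equivariantization with the twisted center, and the Q-system classification of local extensions --- are standard categorical machinery once the twisted-sector category has been constructed on the operator-algebra side.
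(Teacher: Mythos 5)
Your argument is correct in outline and is, at bottom, the same mathematics the paper leans on, but you and the paper traverse the correspondence in opposite directions. The paper starts from the extension $A^G \subset A$: by the conformal-net Q-system theory (\cite[Proposition~6.6]{MR3424476}) this extension \emph{is} a Lagrangian algebra in $\cZ(A^G)$, by \cite[Corollary~4.1]{MR3039775} a Lagrangian algebra is the same datum as a fusion category $\cC$ with $\cZ(\cC) \simeq \cZ(A^G)$, and Kirillov's arguments \cite{MR1923177} identify $\cC$ with $\Vect^\omega[G]$. You instead build $\cC$ directly as the $G$-crossed braided category of twisted sectors and recover $\cZ(A^G)$ as its equivariantization; since that is precisely the content of the cited Kirillov argument, the two presentations are two faces of one proof. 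Yours has the advantage of saying explicitly where $\omega$ lives (the associator of the twisted sectors); the paper's packaging has the advantage of isolating exactly which steps are model-dependent. Two such steps are left implicit in your sketch and deserve to be named. First, before any of the categorical machinery applies you need $A^G$ to be completely rational, so that $\cZ(A^G)$ is a modular tensor category at all and the Galois/equivariantization correspondence is available; this is Xu's theorem \cite{MR1806798} and is a genuine operator-algebraic input, not ``standard categorical machinery.'' Second, your appeal to ``$G$-crossed non-degeneracy'' to get a unique simple in each $\cC_g$ presupposes that each $\cC_g$ is nonzero; the existence of a nontrivial $g$-twisted sector for every $g$ is exactly the index computation of \cite{MR1410566,MR2100058} that you gesture at, and cannot be deduced from abstract non-degeneracy without circularity. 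With those two inputs made explicit, your argument closes and agrees with the paper's.
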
 

There does not seem to be a complete statement and proof of this result in the literature, but it follows immediately from the work \cite{MR1806798,MR1923177,MR3039775,MR3424476}:

\begin{proof}
  The main result of \cite{MR1923177} is a proof of this claim under the additional hypothesis that~$A^G$ is rational. Although that paper is phrased in terms of vertex algebras, the proof is essentially model-independent, and applies equally well in the world of chiral conformal nets. Rationality of~$A^G$, provided $A$ is rational and $G$ is finite, is the main result of \cite{MR1806798}. (In the conformal net literature, what we call ``rationality'' is usually referred to as ``complete rationality.'')
 
  The only model-dependent result needed in \cite{MR1923177} is a characterization of extensions $V \subset W$ of chiral algebras in terms of certain algebra objects in $\cZ(V)$ --- these are called ``\'etale algebras'' in \cite{MR3039775} and ``Q-systems'' in \cite{MR3424476}. On the vertex algebra side, this characterization is due to \cite{MR1936496}. On the conformal net side, it is the main result of \cite{MR3424476}; see also \cite{MR1332979}. In particular, \cite[Proposition~6.6]{MR3424476} applied to the extension $A^G \subset A$ implies that $\cZ(A^G)$ contains what in the language of \cite{MR3039775} is called a ``{Lagrangian algebra}.'' By \cite[Corollary~4.1]{MR3039775}, this Lagrangian algebra is equivalent to a choice of fusion category $\cC$ with $\cZ(A^G) = \cZ(\cC)$, and the arguments of \cite{MR1923177} verify that $\cC = \Vect^\omega[G]$. Finally, as remarked already in Section~\ref{physical picture}, each trivialization of $\omega$ produces an equivalence $\cZ(\Vect^{\omega=0}[G]) = \cZ(\cat{Rep}(G))$, and so a new Lagrangian algebra in $\cZ(A^G) = \cZ(\Vect^{\omega=0}[G])$, and so a new extension $A^G \subset A\sslash G$ with $A\sslash G$ holomorphic.
\end{proof}

The series \cite{BDH0,BDH3,MR3439097,BDH4,MR3656522} provides a context in which the entire story of Section~\ref{physical picture} can be mathematically realized, and in particular provides a clean interpretation of the anomaly $\omega$. Specifically, that series builds a symmetric monoidal 3-category $\cat{ConfNet}$ whose objects are rational conformal nets.  $\cat{ConfNet}$ is a higher-categorical analog of the symmetric monoidal bicategory $\cat{Alg}$, first constructed in \cite{MR0220789}, whose objects are associative algebras and morphisms are bimodules.  Given a rational conformal net $V$, its modular tensor category $\cZ(V)$ of vertex modules arises in $\cat{ConfNet}$ as the 1-category of endo-2-morphisms of the identity 1-morphism from $V$ to itself. (As in Section~\ref{physical picture}, the notation $\cZ(V)$ is chosen in analogy with the cases of associative algebras and fusion categories. For example, if $V \in \cat{Alg}$ were an associative algebra, then its center $\cZ(V)$ would arise as the algebra of endo-2-morphisms of the identity 1-morphism on~$V$.)
 According to \cite[Theorem 2]{BDH0}, every rational conformal net is 3-dualizable, and so defines via \cite{Lur09} a fully-extended three-dimensional TFT valued in $\cat{ConfNet}$. By design, this TFT is the Reshetikhin--Turaev theory determined by $\cZ(V)$.
 
 Holomorphic conformal nets are invertible in $\cat{ConfNet}$ \cite[Theorem 3.23]{Tring}. In a symmetric monoidal category, all invertible objects have canonically-identified spaces of endomorphisms. Thus an action of $G$ on a holomorphic conformal net determines a (homotopy class of) map(s) $\omega : \mathrm{B} G \to \mathrm{B} \Aut_{\cat{ConfNet}}(\mathrm{TrivialObject})$. The latter is a $K(\rU(1),3)$, via a bosonic version of \cite[Theorem 3.19]{Tring}. (Specifically: the paper \cite{Tring} works with $\bZ/2$-graded conformal nets; if, following \cite{BDH0,BDH3,MR3439097,BDH4,MR3656522}, one works instead with bosonic conformal nets, \cite[Theorem 3.19]{Tring} identifies the endomorphism 2-category of any invertible conformal net with the Morita 2-category of von Neumann algebras, which has just one isomorphism class of invertible object and just one isomorphism class of invertible 2-morphism. $\cat{ConfNet}$ is a 3-category, and so $\mathrm{B} \Aut_{\cat{ConfNet}}(\mathrm{TrivialObject})$ should be read as a homotopy 3-type rather than a specific space.) In particular, maps into $\Aut_{\cat{ConfNet}}(\mathrm{TrivialObject})$ are naturally identified with classes in $\H^3(-,\rU(1))$, and the map $\omega$ is the anomaly.

\subsection{T-duality for finite groups} \label{t-duality}
Suppose now that the group $G$ acting on a holomorphic conformal field theory $V$ has a cyclic normal subgroup of order $n$, so that $G$ is an extension of shape $G = n.J$ for some finite group $J = G/n$. (We continue to follow the ATLAS \cite{ATLAS} and let ``$n$'' denote a cyclic group of order~$n$.) In practice, such groups arise as centralizers and normalizers of cyclic subgroups $n \mono \Aut(V)$.
Suppose further that the anomaly $\omega \in \H^3(G,\rU(1))$ of the action of $G$ on $V$ trivializes when restricted to the subgroup $n$. (Since $n$ is cyclic, $\H^2(n,\rU(1)) = 0$, and so there is no data in the choice of trivialization.) By the above discussion, one can produce the orbifold conformal field theory $V \sslash  n$. An interesting question to ask is: what group acts on $V\sslash n$, and with what anomaly?

The special case $\omega = 0$ of this question is answered in \cite{BT2017}.  We will give the answer under the assumption $\H^1(J,n) = \H^1(J,\widehat{n}) = 0$, where $\widehat{n} = \hom(n,\rU(1))$ denotes the Pontryagin dual cyclic group to $n$. (We will distinguish~$n$ from~$\widehat{n}$ in what follows, not least to accommodate the case that $n \subset G$ is normal but not central, in which case the $J$-actions on~$n$ and~$\widehat{n}$ might not be isomorphic.)  This assumption holds in all the cases we will care about in Section~\ref{small primes}. It is convenient because it means that there is no essential ambiguity in choosing a cocycle for a class in $\H^1(J,n)$ or $\H^1(J,\widehat{n})$. Indeed, any two cocycles will then differ by the derivative of a \emph{unique} cohomology class of $1$-cochains. (Of course, two not-necessarily-closed cochains are \define{cohomologous} if they differ by an exact cochain.)

Any group extension provides a Lyndon--Hochschild--Serre (LHS) spectral sequence. The $E_2$-page of the spectral sequence, in the case of the extension $G = n.J$, is $\H^\bullet(J, \H^\bullet(n,\rU(1)))$, and the sequence converges to $\H^\bullet(G,\rU(1))$. 
In total degree $3$ the LHS spectral sequence writes $\H^3(G,\rU(1))$ as an extension 
$$\H^3(G,\rU(1)) = (\text{quotient of $\H^3(J,\rU(1))$}).(\text{sub of $\H^2(J,\widehat{n})$}).\H^3(n,\rU(1)),$$
where we have used the isomorphisms $\H^1(n,\rU(1)) = \widehat{n}$, $\H^2(n,\rU(1)) = 0$, and $\H^1(J,\widehat{n}) = 0$ to simplify the formulas.
The quotients and subs are, of course, cut out by the higher-degree differentials in the spectral sequence. The map $\H^3(G,\rU(1)) \to \H^3(n,\rU(1))$ takes $\omega \mapsto \omega|_n$. By assumption, $\omega|_n = 0$. The class $\omega$ therefore determines an element $\alpha \in (\text{sub of $\H^2(J,\widehat{n})$})$.

$$\begin{tikzpicture}
  \matrix{
  \node{$\H^0(J,\Sym^2(\widehat{n}))$}; \\
  \node{$0$}; & \node{$0$}; & \node{$0$}; \\
  \node{$*$}; & \node{$0$}; & \node (a) {$\H^2(J,\widehat{n})$}; \\
  \node{$*$}; & \node{$*$}; & \node{$*$}; & \node {$\H^3(J,\rU(1))$}; & \node (b) {$\H^4(J,\rU(1))$}; \\
  };
  \draw[->] (a) -- node[auto] {$\scriptstyle \d_2$} (b);
\end{tikzpicture}$$

To give the remaining datum of $\omega$ requires understanding the $\d_2$ differential in the spectral sequence.
Let $\kappa \in \H^2(J,n)$ classify the extension $G = n.J$. 
Write $\langle,\rangle$ for the pairing $\widehat{n} \otimes n \to \rU(1)$
 and  $\langle \cup \rangle$ for the induced pairing $\H^i(J,n) \otimes \H^j(J,\widehat{n}) \overset\cup\to \H^{i+j}(J,n \otimes \widehat{n}) \overset{\langle,\rangle}\to H^{i+j}(J,\rU(1))$.
Then
$$ \d_2(\alpha) = \langle \alpha \cup \kappa\rangle \in \H^4(J,\rU(1)).$$
Choose cocycles for $\alpha$ and $\kappa$. Then the final datum of $\omega$ is a cohomology class of solutions $\beta \in \C^3(J,\rU(1))$ to 
$$\d \beta = \langle \alpha \cup \kappa\rangle.$$

The above equation for $\beta$ is manifestly symmetric under switching the roles of $\alpha$ and $\kappa$. We can therefore define the \define{T-dual} of the pair $\bigl(G = n.J, \omega \in \H^3(G,\rU(1))\bigr)$ to be the pair $(\widehat{G},\widehat{\omega})$ where $\widehat{G} = \widehat{n}.J$ is classified by $\alpha$ and where $\widehat{\omega} \in \H^3(\widehat{G},\rU(1))$ is the cohomology class assembled from the data $(\kappa,\beta)$. This ``finite group T-duality'' is obviously an involution. The name comes from thinking of the cyclic group $n$ as a ``finite circle,'' the extension $G = n.J$ as a ``finite circle bundle,'' the Pontryagin dual group $\widehat{n}$ as the ``dual circle,'' the extension $\widehat{G}$ as the ``dual circle bundle,'' and the anomaly $\omega\in \H^3(G,\rU(1))$ as the ``Kalb--Ramond field.'' 

\begin{proposition}\label{t duality prop}
  With notation as above, suppose that $G = n.J$ acts on a holomorphic conformal field theory $V$ with anomaly $\omega$. Then the T-dual group $\widehat{G}$ acts on the orbifold $V\sslash n$ with T-dual anomaly $\widehat{\omega}$.
\end{proposition}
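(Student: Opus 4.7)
The plan is to work inside the bulk--boundary picture of Section~\ref{physical picture}. The $G$-action on the holomorphic CFT $V$ with anomaly $\omega$ is encoded, via Proposition~\ref{prop.orbifolding works}, as the boundary condition $V^G$ for the 3d Dijkgraaf--Witten theory $\cZ(\Vect^\omega[G])$, recovered by sandwiching $V^G$ against the pure Dirichlet boundary $\Vect^\omega[G]$. Because $\omega|_n = 0$ is (uniquely, since $\H^2(n,\rU(1)) = 0$ for cyclic $n$) trivialized, we may replace the Dirichlet boundary for the $n$-subgroup by a Neumann boundary, producing a new sandwich whose boundary CFT is by definition $V\sslash n$. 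The task is to read off the residual global symmetry of this sandwich and its anomaly.

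Step one is to identify the residual symmetry group. It contains a manifest copy of the quantum-symmetry group $\widehat{n} = \hom(n,\rU(1))$ coming from bulk line defects in the $n$-part of $\Vect^\omega[G]$, together with a lift of $J = G/n$ acting by (possibly anomalous) automorphisms of $V\sslash n$. These fit into an extension $1 \to \widehat{n} \to \widehat{G} \to J \to 1$. To compute its class in $\H^2(J,\widehat{n})$, I would choose a 3-cocycle representative $\omega \in \rC^3(G,\rU(1))$, decompose it along the extension $G = n.J$ into the LHS components, and check directly on cochains that the obstruction to a direct-product splitting $\widehat{G} = \widehat{n} \times J$ is exactly the $\H^2(J,\widehat{n})$-class $\alpha$ extracted from $\omega$ just before the statement of the proposition. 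This is the heart of the proof.

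Step two is to compute the anomaly $\widehat{\omega} \in \H^3(\widehat{G},\rU(1))$ by extracting its own LHS data, now for the extension $\widehat{G} = \widehat{n}.J$. Since orbifolding twice by Pontryagin-dual cyclic groups of a holomorphic CFT returns the original theory --- that is, $(V\sslash n)\sslash \widehat{n} \simeq V$ --- the $\widehat{n}$-gauging is itself unobstructed and $\widehat{\omega}|_{\widehat{n}} = 0$. By the $n \leftrightarrow \widehat{n}$ symmetry of step one applied now to $\widehat{G}$, the $\H^2(J,n)$-datum of $\widehat{\omega}$ is the extension class $\kappa$ of the original $G = n.J$. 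The final piece is a 3-cochain $\widehat{\beta} \in \rC^3(J,\rU(1))$ satisfying $\d\widehat{\beta} = \langle\kappa \cup \alpha\rangle = \langle\alpha \cup \kappa\rangle = \d\beta$, so up to the ambiguity (which is controlled since $\H^1(J,\widehat{n}) = 0$) one may take $\widehat{\beta} = \beta$. These three pieces assemble into precisely the class $\widehat{\omega}$ defined before the proposition.

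The main obstacle is step one: on the nose identification of the lift of $J$ to $\widehat{G}$ and verification that its 2-cocycle obstruction agrees with the LHS class $\alpha$ derived from $\omega$. The restriction of $\omega$ through $n \mono G \epi J$ controls both the extension $\widehat{G}$ and the Drinfel'd-center equivalence $\cZ(\Vect^\omega[G]) \simeq \cZ(\Vect^{\widehat{\omega}}[\widehat{G}])$ that underlies the sandwich swap; pinning down the precise cocycle requires a careful bookkeeping exercise with group 3-cochains (a partial Fourier transform along $n$). Once step one is established, step two is essentially formal: it is forced by the manifest involutive symmetry of the formula $\d\beta = \langle\alpha \cup \kappa\rangle$ that defined finite-group T-duality in the first place.
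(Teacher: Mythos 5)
Your overall strategy --- realize $V$ and $V\sslash n$ as the two sandwiches of $V^G$ against the Dirichlet and Neumann boundary conditions of the bulk theory $\cZ(\Vect^\omega[G])$, and read off the dual group and anomaly from the Neumann side --- is the same strategy the paper uses (there phrased as two Lagrangian algebras in $\cZ(V^G)$, equivalently two fusion categories $\Vect^\omega[G]$ and $\Vect^{\widehat\omega}[\widehat G]$ with the same Drinfel'd center). But your write-up has a genuine gap exactly where you locate ``the heart of the proof.'' Step one --- the verification that the extension class of the quantum-symmetry group $\widehat G = \widehat n . J$ equals the LHS component $\alpha$ of $\omega$, via a ``partial Fourier transform along $n$'' on 3-cochains --- is announced but never performed, and it is not a routine bookkeeping exercise: carrying it out amounts to re-deriving the relevant part of the classification of $J$-graded extensions of $\Vect[n]$. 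The paper avoids this computation entirely by citing the main theorem of Etingof--Nikshych--Ostrik, whose appendix already identifies the classification data $(\kappa\oplus\alpha,\beta)$ with the LHS components of $\omega$, and whose Morita-invariance (under $\Vect[n]\simeq\cat{Rep}(n)=\Vect[\widehat n]$) makes the $n\leftrightarrow\widehat n$ symmetry manifest. If you want a self-contained cochain proof you must actually exhibit the lift of $J$ to $\Aut(V\sslash n)$, its $2$-cocycle obstruction, and the comparison with $\alpha$.

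There is a second, independent gap in your step two. From $\d\widehat\beta = \langle\kappa\cup\alpha\rangle = \d\beta$ you conclude ``one may take $\widehat\beta=\beta$,'' but two solutions of the same equation $\d(-) = \langle\alpha\cup\kappa\rangle$ differ by an arbitrary \emph{closed} $3$-cochain, i.e.\ by an arbitrary class in $\H^3(J,\rU(1))$ --- and that group is typically nonzero (e.g.\ $\H^3(\Co_1,\rU(1))\cong\bZ_{12}$ in the paper's main application). The hypothesis $\H^1(J,\widehat n)=0$ that you invoke controls the ambiguity in choosing cocycle representatives for $\alpha$ and $\kappa$, not the ambiguity in $\beta$. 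Showing that the actual $\H^3(J,\rU(1))$-component of the anomaly of the $\widehat G$-action is the \emph{same} $\beta$ (rather than $\beta$ plus some nontrivial correction) is precisely the remaining content of the proposition; in the paper it again falls out of the Morita-invariance of the graded-extension classification. Your double-orbifold argument pins down $\kappa$ but says nothing about this last component.
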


\begin{proof}
  We first reformulate T-duality in terms of fusion categories. The requirements $G = n.J$ and $\omega|_n = 0$ are equivalent to saying that $\Vect^\omega[G]$ is a \define{$J$-graded extension of $\Vect[n]$}.
The main theorem of~\cite{MR2677836} (see also \cite{MR2511638}) classifies $J$-graded extensions of $\Vect[n]$.  The first datum in that classification is a map $J \to \rO(n \oplus \widehat{n})$, where the latter denotes the group of transformations that are orthogonal for the canonical  pairing. The actions corresponding to group extensions $G = n.J$ are precisely those where the map $J \to \rO(n \oplus \widehat{n})$ factors through the map $J \to \mathrm{GL}(n) \cong \mathrm{GL}(\widehat{n})$ giving the outer action of $J$ on $n$.   The next datum is the class $\kappa \oplus \alpha \in \H^2(J,n\oplus \widehat{n})$.
  This datum cannot be arbitrary: the \define{obstruction} $\langle \alpha \cup \kappa \rangle \in \H^4(J,\rU(1))$ must vanish. The third and final datum needed is a cohomology class of solutions $\beta$ to $\d \beta = \langle \alpha \cup \kappa\rangle$ (for some choices of cocycles of $\kappa$ and $\alpha$). 
  The appendix to \cite{MR2677836} identifies these data with the components of the expansion of $\omega$ via the LHS spectral sequence for $\H^\bullet(n.J,\rU(1))$. 
  The classification is manifestly invariant under Pontryagin duality $n \leftrightarrow \widehat{n}$. Indeed, as observed in \cite{MR2677836}, the classification of $J$-graded extensions of $\Vect[n]$ depends only on the Morita equivalence class of the fusion category $\Vect[n]$, and Pontryagin duality corresponds to the Morita equivalence $\Vect[n] \simeq \cat{Rep}(n) = \Vect[\widehat{n}]$ of \cite[Example 3.15]{MR3077244}.
  
  The fusion-categorical reformulation in hand, it is straightforward to confirm that if $G = n.J$ acts on $V$ with anomaly $\omega$, then $\widehat{G}$ acts on $V\sslash n$ with anomaly $\widehat{\omega}$. Indeed, consider $V$ and $V\sslash n$ each as extensions of $V^G$. As in the proof of Proposition~\ref{prop.orbifolding works}, the two extensions correspond to two different Lagrangian algebras in $\cZ(V^G) = \cZ(\Vect^\omega[G])$, or equivalently two different fusion categories with center $\cZ(V^G)$. One is simply $\Vect^\omega[G]$. The other is easily seen to be $\Vect^{\widehat{\omega}}[\widehat{G}]$, by inspecting the construction from \cite{MR2677836}. This forces $V^G$ to be precisely the fixed points of an action of $\widehat{G}$, with anomaly $\widehat{\omega}$, on $V\sslash n$.
\end{proof}

Since T-duality is involutive, one immediately finds the following examples. Suppose $g \in \Aut(V)$ is an element of order $n$, generating the subgroup $\langle g \rangle \cong n$, and suppose that this subgroup acts nonanomalously on $V$.
The \define{T-dual element} $\widehat{g}$ of $g$ is the generator of the dual $\widehat{n}$ acting on $V\sslash \langle g\rangle$ (normalized so that $\langle \widehat{g},g\rangle = \exp(2\pi i/n) \in \rU(1)$).
 Then the centralizer $C(g) \subset \Aut(V)$ of $g$ is T-dual to the centralizer $C(\widehat{g}) \subset \Aut(V\sslash \langle g \rangle)$, and the normalizer $N(\langle g\rangle) \subset \Aut(V)$ of the subgroup $\langle g\rangle$ is T-dual to $N(\langle \widehat{g} \rangle) \subset \Aut(V\sslash \langle g \rangle)$.

\subsection{Super case} \label{super case}

Although it will not play a role in this paper, it's worth mentioning for completeness the description of orbifolds of possibly-fermionic conformal field theories. If $V$ is allowed to have fermions, then the anomaly of an action of $G$ on $V$ is not a class in $\H^3(G,\rU(1))$. It is instead a class in the ``extended supercohomology'' of~\cite{WangGu2017}, which, as explained in \cite{GJFa}, is the generalized cohomology theory corresponding to the spectrum $S = \cat{SuperAlg}_\bC^\times$ with homotopy groups $\pi_{-2} S = \pi_{-1} S = \bZ_2$ and $\pi_0 S = \rU(1)$ and k-invariants $\Sq^2 : \bZ_2 \to \bZ_2$ and $\exp(\pi i \Sq^2) : \bZ_2 \to \rU(1)$. (The two possible j-invariants in this case give equivalent spectra.)
A cocycle $\omega \in \Z^3(G,S)$ consists of cochains $\omega_1 \in \C^1(G,\bZ_2)$, $\omega_2 \in \C^2(G,\bZ_2)$, and $\omega_3 \in \C^3(G,\rU(1))$ solving $\d \omega_1 = 0$, $\d \omega_2 = \Sq^2\omega_1$, and $\d \omega_3 = \exp\bigl(\pi i \Sq^2\omega_2 + (\text{term involving }\omega_1)\bigr)$.
Let $S'$ denote the spectrum with homotopy groups $\pi_{-1} S' = \pi_0 S' = \bZ_2$ and k-invariant $\Sq^2$. Then $\H^3(G,S)$ fits into a long exact sequence like
$$ \dots \to \H^3(G,\rU(1)) \to \H^3(G,S) \to \H^2(G,S') \to \dots. $$
The map $ \H^3(G,\rU(1)) \to \H^3(G,S)$ corresponds to considering a bosonic conformal field theory as a possibly-fermionic one.

One can understand the anomaly in various ways. In terms of the induced action of $G$ on the trivial 3D theory $\cZ(V)$, the spectrum $S$ appears as a connective cover of the Pontryagin dual to Spin Cobordism.
The anomaly $\omega \in \H^3(G,S)$ also has meaning in terms of the \emph{super} fusion category of solitons on $V$ that restrict to honest modules on $V^G$. As in the even case, there is a ``unique'' irreducible $g$-graded soliton (aka twisted sector) $V_g$ for each $g\in G$, where ``unique'' means up to possibly-odd isomorphism. Then $\omega_1(g) \in \bZ_2$ measures whether whether the endomorphism algebra of $V_g$ is $\bC$ or $\mathrm{Cliff}_\bC(1)$ (objects of the latter type are called \define{Majorana}), and $\omega_2(g_1,g_2) \in \bZ_2$ measures whether the isomorphism $V_{g_1}\otimes V_{g_2}\cong V_{g_1g_2}$ is even or odd. Finally, $\omega_3$ is the associator familiar from the non-super case.

As in the bosonic case, if $G$ acts on $V$ with anomaly $\omega$, then orbifolds $V\sslash G$ depend on a choice of trivialization of $\omega$. Unlike in the bosonic case, if $G$ is cyclic of even order, there is a choice in the trivialization. To trivialize $\omega$, one first must trivialize $\omega_1 \in \H^1(G,\bZ_2)$; this either trivializes or it doesn't. Such a choice makes $\omega_2$, which originally lives in a torsor for $\H^2(G,\bZ_2)$, into an honest cohomology class; one must then trivialize $\omega_2$, and there are $\H^1(G,\bZ_2)$-many ways to do so. This group is a copy of $\bZ_2$ when $G = \bZ_{2n}$, whereas in the bosonic case there was no ``$\omega_2$'' needing trivialization. Finally, any such choice makes $\omega_3$ into an honest cohomology class, and one must trivialize it --- there are $\H^2(G,U(1))$ many choices for the trivialization, and this group vanishes for $G$ cyclic. 

Super T-duality is clearest when $\omega_1 = 0$. Then $\omega = (\omega_2,\omega_3)$ is a class in the restricted supercohomology of \cite{GuWen}. More pedantically, a cocycle representative of $\omega$ consists of a cocycle $\omega_2 \in \Z^2(G,\bZ_2)$ and a cochain $\omega_3 \in \C^3(G,\rU(1))$ solving $\d\omega_3 = (-1)^{\Sq^2 \omega_2} = (-1)^{\omega_2 \cup \omega_2}$, which in  the notation of Section~\ref{t-duality} is $\d \omega_3  = \langle \omega_2\cup\omega_2\rangle$.

The super fusion category $\cat{SuperVect}^\omega[G]$ has an underlying non-super fusion category. It is again group-like of shape $\Vect^{\tilde \omega}[\tilde G]$, where $\tilde G = 2.G$ is the extension classified by $\omega_2 \in \H^2(G,\bZ_2)$. The associator is $\tilde \omega = \omega_2 + \omega_3 \in \H^3(\tilde G,\rU(1))$; it exists because of the equation $\d\omega_3 = \langle \omega_2\cup\omega_2\rangle$, just as in Section~\ref{t-duality}. The pair $(\tilde G,\tilde \omega)$ is called the \define{bosonic shadow} of $(G,\omega)$ in \cite{BGK2016}.

Suppose now that $G = n.J$ and that $\omega|_n = 0$ (meaning of course that a trivialization has been chosen). Construct its bosonic shadow $\tilde G = 2.n.J$. Since $\omega|_n = 0$, the subgroup $2.n = \tilde n \subset \tilde G$ is a direct product: $\tilde G = (2\times n).J = n.2.J = n.\tilde J$. Applying the bosonic T-duality from Section~\ref{t-duality} produces a new group $\widehat{\tilde{G}} = \widehat{n}.\tilde J = \widehat{n}.2.J$. One  has $\widehat{n}.2 = \widehat{n} \times 2$, and so we can write $\widehat{\tilde{G}} = 2.\widehat{n}.J$. Define $\widehat{G} = \widehat{\tilde{G}}/2 = \widehat{n}.J$. To complete the construction, one checks that the T-dual bosonic anomaly $\widehat{\tilde{\omega}} \in \H^3(2.\widehat{G},\rU(1))$ splits as $\widehat{\omega_2} + \widehat{\omega_3}$ with $\d\widehat{\omega_3} = \langle \widehat{\omega_2}\cup\widehat{\omega_2}\rangle$; then $\widehat{\omega} = (\widehat{\omega_2},\widehat{\omega_3})$ is the dual super anomaly for $\widehat{G}$.

\begin{example}\label{example fer 24}
An important class of fermionic conformal field theories are the free Majorana fermion models $\Fer(n)$ of central charge $c = \frac n 2$. The automorphism group of $\Fer(n)$ is $\rO(n)$. According to \cite{MR2742433}, the anomaly of an action $G \to \rO(n)$ is precisely the \define{string} obstruction --- the obstruction to lifting the action to a map $G \to \mathrm{String}(n)$, which by definition is the $3$-connected cover of $\rO(n)$. $\mathrm{String}(n)$ fits into a tower $\dots \to \mathrm{String}(n) \to \mathrm{Spin}(n) \to \mathrm{SO}(n) \to \rO(n)$. The obstruction $\omega$ can be understood in pieces by trying to lift into the intermediate groups, and these pieces match the ones above: $\omega_1$ is the first Stiefel--Whitney class, obstructing the lift from $\rO(n)$ to $\mathrm{SO}(n)$; $\omega_2$ is the second Stiefel--Whitney class, obstructing the lift from $\mathrm{SO}(n)$ to $\mathrm{Spin}(n)$; and $\omega_3 = \frac{p_1}2$ is the first fractional Pontryagin class, obstructing the lift from $\mathrm{Spin}(n)$ to $\mathrm{String}(n)$.
In general, for an action of $G$ on a fermionic conformal field theory $V$, trivializations of $\omega_1$ and $\omega_2$ can be thought of as choices of ``orientation'' and ``spin structure.''

Consider the central subgroup $\bZ_2 \subset \rO(n) = \Aut(\Fer(n))$ switching the signs of all fermions simultaneously. Its string obstruction --- its anomaly --- vanishes exactly when $n$ is divisible by $8$. Set $n=8k$ and $G = 2.J \subset \mathrm{SO}(n)$, so that we can ignore $\omega_1$. What is the T-dual group $\widehat{G}$ acting on $\Fer(n)\sslash \bZ_2$? To make the question well-posed we need to choose a trivialization of $\omega_2 |_{\bZ_2}$. The choices are a torsor for $\H^2(\bZ_2,\bZ_2) = \bZ_2$. The two choices correspond to the two nontrivial double covers other than $\mathrm{SO}(n)$ of $\mathrm{PSO}(n)$. These double covers,  called $\mathrm{SO}^+(n)$ and $\mathrm{SO}^-(n)$, are the images of $\mathrm{Spin}(n)$ under the two spin-module actions. Choose either $\mathrm{SO}^\pm(n)$. Then $\widehat{G}$ is nothing but the preimage in $\mathrm{SO}^\pm(n) = 2.\mathrm{PSO}(n)$ of $J \subset \mathrm{PSO}(n)$.

Although the groups $\mathrm{SO}^\pm(n)$ are isomorphic --- they are related by a reflection in $\rO(n)$ --- the preimages $\widehat{G} = 2.J$ may not be. A main example is $G = \Co_0 = 2.\Co_1 \subset \mathrm{SO}(24)$. Then one of the preimages is again isomorphic to $\Co_0$ and the other is isomorphic to $2\times \Co_1$. 
These choices show up when studying the ``super moonshine'' modules of \cite{MR2352133,MR3376736}. Both papers study a fermionic CFT isomorphic to $\Fer(24) \sslash  \bZ_2$, with automorphism group $\mathrm{SO}^\pm(24)$. According to \cite{MR2352133}, this CFT admits $\cN = 1$ supersymmetry, and as a supersymmetric conformal field theory its automorphism group is $\Co_1$. According to \cite{MR3376736}, the genus-zero phenomena central to Monstrous Moonshine appear when studying instead a faithful $\Co_0$ action on $\Fer(24) \sslash  \bZ_2$. Fix a copy of $\Co_0 \subset \mathrm{SO}(24)$. One can think of the two ``super moonshine'' CFTs as two different choices of the orbifold $\Fer(24) \sslash  \bZ_2$. If, when defining $\Fer(24) \sslash  \bZ_2$, we use the spin structure for the $\bZ_2$-action that extends to all of $\Co_0$, the T-dual group is $\Co_0$, and so we get the version from \cite{MR3376736}; if we use the other spin structure, the T-dual group is $2 \times \Co_1$, of which only $\Co_1$ preserves the $\cN=1$ supersymmetry, and we get the version from \cite{MR2352133}.

If follows from Theorem~\ref{conway theorem} that the $\Co_0$-action on the CFT from \cite{MR3376736} has anomaly of order $24$ in $\H^3(\Co_0,\rU(1)) \cong \bZ_{24}$. The anomaly for the $\Co_1$-action on the CFT from \cite{MR2352133} is honestly super: it is a class in the  supercohomology of $\Co_1$. This is a copy of $\bZ_{24} = \bZ_{12}.\bZ_2 = \H^3(\Co_1,\rU(1)).\H^2(\Co_1,\bZ_2)$, and the anomaly again has order $24$.
\end{example}

\begin{example}
Another class of examples of fermionic orbifolds comes from lattice conformal field theories. If~$L$ is an even unimodular lattice and $L' \subset L$ has index $2$, then $(L')^*/L$ is isomorphic to either $\bZ_4$ or to $\bZ_2^2$. The former case corresponds to an anomalous action of $\bZ_2$ on the lattice conformal field theory $V_L$, and the latter occurs when the anomaly is trivial. In the latter case, there is a unique even lattice other than $L$ that contains $L'$; this uniqueness reflects the uniqueness in bosonically trivializing the anomaly.  But there is also an odd lattice containing $L'$ in index $2$, which comes from the other choice of fermionic trivialization.
\end{example}

\section{Proof of Theorem~\ref{moonshine theorem}} \label{M}

We now prove Theorem~\ref{moonshine theorem}, which asserts that the action of the Monster group $\bM$ on its natural representation $V^\natural$ has an anomaly $\omega^\natural$ of order exactly $24$.  The  conformal field theoretic inputs we use are:
\begin{enumerate}
  \item the existence of $V^\natural$ and $\bM = \Aut(V^\natural)$, and hence of the anomaly $\omega^\natural \in \H^3(\bM,\rU(1))$;
  \item isomorphisms $V^\natural \sslash  \bZ_p \cong V_\Lambda$ for $p = 2,3,5,7,13$, and hence T-dualities between certain subgroups of $\bM$ and $\Aut(V_\Lambda) = \widehat{\Lambda}.\Co_0$, where $\Lambda$ denotes the Leech lattice, $\widehat{\Lambda}$ is its Pontryagin dual torus, and $V_\Lambda$ is the Leech lattice conformal field theory.
\end{enumerate}

When calculating cohomology classes of finite groups, one may proceed prime-by-prime, because of the following standard result.

\begin{lemma}\label{transfer-restriction}
 Let $G$ be a finite group. Then $\H^k(G,\rU(1))$ is finite abelian for $k\geq 1$, and so splits as $\H^k(G,\rU(1)) = \bigoplus_p \H^k(G,\rU(1))_{(p)}$ where the sum ranges over primes $p$ and $\H^k(G,\rU(1))_{(p)}$ has order a power of $p$.  Fix a prime $p$ and suppose that $S \subset G$ is a subgroup such that $p$ does not divide the index $|G|/|S|$, i.e.\ such that $S$ contains the $p$-Sylow of $G$. Then the {restriction} map $\alpha \mapsto \alpha|_S : \H^k(G,\rU(1)) \to \H^k(S,\rU(1))$ is an injection onto a direct summand.\qed
\end{lemma}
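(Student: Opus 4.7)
The plan is to treat the two assertions separately: first the finiteness and primary decomposition of $\H^k(G,\rU(1))$, then the split injectivity of restriction via the corestriction (transfer) map.

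For the finiteness, since $\bR$ is a $\bQ$-vector space and $G$ is finite, the averaging identity $\tfrac{1}{|G|}\mathrm{cor}^G_{\{e\}}\circ\mathrm{res}^G_{\{e\}}=\id$ gives $\H^i(G,\bR)=0$ for $i\geq 1$. The long exact sequence attached to $0\to\bZ\to\bR\to\rU(1)\to 0$ therefore collapses to an isomorphism $\H^k(G,\rU(1))\cong\H^{k+1}(G,\bZ)$ for $k\geq 1$. The right-hand side is finitely generated (e.g.\ from the bar resolution, which is of finite type over $\bZ[G]$) and annihilated by $|G|$ (the same cor--res identity applied to integer coefficients, using $\H^i(\{e\},\bZ)=0$ for $i\geq 1$), hence finite. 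Any finite abelian group splits canonically as the direct sum of its $p$-primary components, yielding the decomposition $\H^k(G,\rU(1))=\bigoplus_p\H^k(G,\rU(1))_{(p)}$.

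For the split injectivity, I would invoke the composition formula $\mathrm{cor}^G_S\circ\mathrm{res}^G_S=[G:S]\cdot\id$ on $\H^\bullet(G,\rU(1))$, where $\mathrm{cor}^G_S:\H^\bullet(S,\rU(1))\to\H^\bullet(G,\rU(1))$ is the corestriction. Both maps are $\bZ$-linear and so preserve primary decompositions. Setting $n=[G:S]$, the hypothesis $\gcd(n,p)=1$ makes multiplication by $n$ an automorphism of the finite $p$-primary group $\H^k(G,\rU(1))_{(p)}$. Choosing $m\in\bZ$ with $mn\equiv 1$ modulo the exponent of this group, the map $m\cdot\mathrm{cor}^G_S$ retracts $\mathrm{res}^G_S$ on the $p$-primary part, realizing $\H^k(G,\rU(1))_{(p)}$ as a direct summand of $\H^k(S,\rU(1))$ under $\alpha\mapsto\alpha|_S$. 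This $p$-primary form is precisely what the later sections need to bound $\H^k(\bM,\rU(1))_{(p)}$ by the cohomology of subgroups containing a $p$-Sylow.

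No serious obstacle arises; the argument is textbook, resting only on the existence and normalization of the corestriction and on the standard structure theory of finite abelian groups. The one bookkeeping subtlety worth keeping in mind is that restriction can kill torsion at primes different from $p$, so the meaningful content of the injectivity statement lies in the $p$-primary summand rather than the full cohomology group; this matches the way the lemma is invoked in Sections~\ref{large primes}--\ref{monster 2}.
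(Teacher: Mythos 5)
Your argument is correct and is exactly the standard transfer--corestriction proof that the paper omits (the lemma is stated with an immediate \qed as a standard fact), so there is nothing to compare against: finiteness via the long exact sequence for $0\to\bZ\to\bR\to\rU(1)\to 0$ together with the cor--res identity, and split injectivity on the $p$-primary part because $[G:S]\cdot\id$ is invertible there. Your closing remark is also the right reading of the statement: the injectivity is meant for the summand $\H^k(G,\rU(1))_{(p)}$, which is how the lemma is actually invoked throughout Section~\ref{M}.
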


The Sylow subgroups of $\bM$ are known (and listed for example in the ATLAS \cite{ATLAS}).  For reference sake, the order of the Monster is
$$ |\bM| = 2^{46} \cdot 3^{20} \cdot 5^9 \cdot 7^6 \cdot 11^2 \cdot 13^3 \cdot 17 \cdot 19 \cdot 23 \cdot 29 \cdot 31 \cdot 41 \cdot 47 \cdot 59 \cdot 71.$$
We consider the primes $17$--$71$, and also the prime $11$, as ``large,'' since they can be dispensed with quickly in Section~\ref{large primes}. The primes $3$, $5$, $7$, and $13$ are ``small,'' and we handle them using T-duality in Section~\ref{small primes}. We also handle the prime $2$ using T-duality, but it is sufficiently complicated that we give it its own Section~\ref{monster 2}.

\subsection{The ``large'' primes \emph{p} = 11, 17, 19, 23, 29, 31, 41, 47, 59, 71} \label{large primes}

\begin{lemma}
  $\H^3(\bM,\rU(1))_{(p)} = 0$ for $p\geq 17$ or $p=11$.
\end{lemma}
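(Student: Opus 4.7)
The plan is to handle each prime separately by restriction to a Sylow subgroup together with a stable-elements argument. By Lemma~\ref{transfer-restriction}, for any prime $p$ the restriction $\H^3(\bM,\rU(1))_{(p)} \hookrightarrow \H^3(S_p,\rU(1))$ to a Sylow $p$-subgroup $S_p$ is injective. The Cartan--Eilenberg stable-element theorem refines this: when $S_p$ is abelian, the image is contained in the invariants $\H^3(S_p,\rU(1))^{W_p}$, where $W_p := N_\bM(S_p)/C_\bM(S_p) \hookrightarrow \Aut(S_p)$. By inspection of the displayed factorization of $|\bM|$ (with $S_p$ cyclic of order $p$ for $p \geq 17$) and the ATLAS (giving $S_{11}$ elementary abelian of shape $11^2$), every Sylow at hand is abelian, so it remains to show these invariant groups vanish.

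For $p \in \{17, 19, 23, 29, 31, 41, 47, 59, 71\}$: writing $\beta \in \H^2(\bZ_p, \bZ)$ for the canonical generator, one has $\H^3(\bZ_p,\rU(1)) \cong \H^4(\bZ_p,\bZ) \cong \bF_p \cdot \beta^2$, and an automorphism $a \in \bF_p^\times = \Aut(\bZ_p)$ acts on $\beta^2$ by multiplication by $a^2$. Hence the $W_p$-invariants vanish exactly when $W_p \not\subset \{\pm 1\} \subset \bF_p^\times$. From the ATLAS \cite{ATLAS} one reads off that for each of these nine primes, $N_\bM(S_p)$ sits inside an appropriate maximal subgroup (e.g.\ $L_2(71)$, $L_2(59)$, $L_2(41)$, $41{:}40$) as a Borel-type subgroup $p{:}k$ with $k > 2$, so $W_p$ is a cyclic subgroup of $\bF_p^\times$ of order greater than $2$ and a fortiori is not contained in $\{\pm 1\}$.

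For $p = 11$: the ATLAS gives $S_{11} \cong 11^2$ together with the maximal normalizer $N_\bM(11^2) \cong 11^2{:}(5 \times 2.A_5)$. By K\"unneth, $\H^3(11^2,\rU(1)) \cong \H^4(11^2,\bZ) \cong S^2(\bF_{11}^2)$ as a $\mathrm{GL}_2(\bF_{11})$-module, where $\bF_{11}^2 \cong \H^2(11^2,\bZ)$ carries the natural action. The plan is to make the embedding $W_{11} = 5 \times 2.A_5 \hookrightarrow \mathrm{GL}_2(\bF_{11})$ explicit and verify by a direct representation-theoretic computation that its action on the three-dimensional module $S^2(\bF_{11}^2)$ has no nonzero fixed vector.

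The main obstacle is the $p = 11$ step: the primes $p \geq 17$ collapse to reading a normalizer quotient from the ATLAS and applying the criterion $W_p \not\subset \{\pm 1\}$, whereas $p = 11$ requires a small but concrete calculation on the symmetric square of the defining representation of $\mathrm{GL}_2(\bF_{11})$ restricted along the image of $W_{11}$.
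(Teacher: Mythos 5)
Your argument for the primes $p\geq 17$ is essentially the paper's: both exhibit a subgroup $p{:}k$ with $k>2$ acting faithfully on the cyclic Sylow, and use that $a\in\bF_p^\times$ acts on $\H^3(\bZ_p,\rU(1))\cong\bZ_p$ by $a^2$, so the invariants (equivalently, the direct summand coming from $\bM$) vanish. This part is correct.

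At $p=11$ you take a slightly different route. The paper restricts to a subgroup $(11{:}5)^{\times 2}$ containing the Sylow and kills $\H^3$ by K\"unneth, since $\H^i(11{:}5,\rU(1))_{(11)}=0$ for $1\leq i\leq 8$; you instead work with the full Sylow normalizer $11^2{:}(5\times 2A_5)$ and compute invariants in $\H^3(11^2,\rU(1))_{(11)}\cong\Sym^2(\bF_{11}^2)$. Both work, and yours closes just as quickly as the paper's once you notice that you do not need the explicit embedding of $2A_5$ into $\mathrm{GL}_2(\bF_{11})$ that you flag as the ``main obstacle'': the direct factor $5$ centralizes the (absolutely irreducible, since $5$ is a square mod $11$) action of $2A_5$ on $\bF_{11}^2$, hence acts by a scalar $\lambda$ of order $5$, hence by the order-$5$ scalar $\lambda^2$ on $\Sym^2(\bF_{11}^2)$, which already has no nonzero fixed vectors. (Even if one worried about faithfulness of that central $5$, the $2A_5$-invariants alone vanish, as $\Sym^2$ of the $2$-dimensional irreducible is the $3$-dimensional irreducible in characteristic prime to $|2A_5|$.) Also note that you only need the elementary containment of the restriction in the $N/C$-invariants, not the full strength of the Cartan--Eilenberg/Swan stable-elements theorem. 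The exterior-cube summand of $\H^3$ is not an issue here since $\Alt^3$ of a $2$-dimensional space vanishes.
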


\begin{proof}
Let $p \geq 17$ be a prime dividing $|\bM|$. Then the $p$-Sylow in $\bM$ is cyclic and contained in a subgroup of shape $p:n$ for $n = \frac{p-1}2 > 2$. It is easy to see that $\H^3(p:n,\rU(1))_{(p)} = 0$. Indeed, it is a direct summand of $\H^3(p,\rU(1)) \cong \bZ_p$, and it is fixed by the $n$-action. The automorphism $g \mapsto g^a$ of $\bZ_p$ acts on $\H^3(\bZ_p,\rU(1))$ by multiplication by $a^2$. But $n \subset \Aut(p) = \bZ_p^\times$ contains elements other than $\pm 1$, and so acts nontrivially on $\H^3(p,\rU(1))$, and so leaves only the $0$ subgroup fixed.

For $p=11$, the $11$-Sylow in $\bM$ is contained in a subgroup of shape $(11:5)^{\times 2}$, and $\H^3\bigl((11:5)^{\times 2},\rU(1)\bigr)_{(11)}$ vanishes by K\"unneth's formula.
\end{proof}

\subsection{The ``small'' primes \emph{p} = 3, 5, 7, 13} \label{small primes}

Let $\Lambda$ denote the Leech lattice, $\Co_0 = \rO(\Lambda)$ its orthogonal group, $\widehat{\Lambda} = \hom(\Lambda,\rU(1)) \cong \rU(1)^{24}$ its dual torus, and $V_\Lambda$ its lattice conformal field theory. As is true for any lattice, the construction $\Lambda \leadsto V_\Lambda$ makes manifest a subgroup of $\Aut(V_\Lambda)$ of shape $\widehat{\Lambda}.\rO(\Lambda)$; this result was first announced in \cite{MR820716} and is surveyed nicely in \cite[Section 5.3]{MollerThesis}. In the Leech case, $\Aut(V_\Lambda)$ is in fact equal to $\widehat{\Lambda}.\rO(\Lambda)$ by \cite[Theorem 2.1]{MR1745258} and the fact the Leech lattice $\Lambda$ has no roots (so that the connected Lie group group called ``$N$'' in \cite{MR1745258} is just $\widehat{\Lambda}$).
In the sense of vertex algebras, the following result is definitional for $p=2$, conjectured in \cite{MR996026,MR1165184} for  $p\in \{3,5,7,13\}$, announced for $p=3$ in \cite{MR1302011}, and proved in full in \cite{AbeLamYamada}:

\begin{proposition}[\cite{AbeLamYamada}] \label{ALYtheorem}
Let $\bZ_p \subset \widehat{\Lambda}.\Co_0 = \Aut(V_\Lambda)$ be a subgroup of prime order $p$ such that its image in $\Co_0 = \rO(\Lambda)$ acts on $\Lambda$ without fixed points. Then $p$ is one of $2$, $3$, $5$, $7$, or $13$ and every fixed-point-free automorphism of $\Lambda$ of order $p$ lifts to an order-$p$ automorphism of $V_\Lambda$. The action of $\bZ_p$ on $V_\Lambda$ is nonanomalous and the orbifold $V_\Lambda \sslash  \bZ_p$ is isomorphic to $V^\natural$.
\end{proposition}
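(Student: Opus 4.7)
The plan is to split the proposition into four claims and address them in order: (i) the list of admissible primes, (ii) existence of an order-$p$ lift of $g \in \Co_0$ to $\Aut(V_\Lambda)$, (iii) nonanomaly of the resulting $\bZ_p$-action, and (iv) identification of the orbifold with $V^\natural$.

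Claim (i) is classical. If $g \in \Co_0 = \rO(\Lambda)$ acts fixed-point-freely and has prime order $p$, then $1$ is not an eigenvalue of $g$ on $\Lambda \otimes \bQ$, so its minimal polynomial over $\bQ$ is the cyclotomic polynomial $\Phi_p$ and its characteristic polynomial is $\Phi_p^{24/(p-1)}$; hence $(p-1) \mid 24$, which among primes forces $p \in \{2,3,5,7,13\}$. For (ii), I would use the standard presentation of $\Aut(V_\Lambda)$ as an extension of $\hat{\rO}(\Lambda)$ by $\widehat{\Lambda}$, where $\hat{\rO}(\Lambda) \to \rO(\Lambda)$ is the $\bZ_2$-central extension determined by the cocycle used to build $V_\Lambda$. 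For odd $p$ every order-$p$ element of $\rO(\Lambda)$ lifts uniquely to an order-$p$ element of $\hat{\rO}(\Lambda)$; for $p=2$ the $(-1)$-involution of $\Lambda$ lifts canonically. Fixed-point-freeness then allows one to adjust this lift by an element of $\widehat{\Lambda}$ so that the resulting element of $\Aut(V_\Lambda)$ has order exactly $p$.

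For (iii) and (iv), the central computation is the conformal weight $h_g$ of the lowest-weight state in the $g$-twisted $V_\Lambda$-module, which for a lattice automorphism admits a closed-form expression in terms of the eigenvalues of $g$ on $\Lambda \otimes \bC$. Following the last paragraph of Section~\ref{physical picture}, the anomaly paired against $\sum_k g \otimes g^k \otimes g$ equals the multiplier by which $ST^pS^{-1}$ acts on the $g$-twined character of $V_\Lambda$, an explicit function of $h_g$; checking case-by-case that this multiplier is trivial settles (iii). Granting (iii), the orbifold $V_\Lambda \sslash \bZ_p$ is a holomorphic $c=24$ VOA whose weight-one subspace splits as $(V_\Lambda)_1^g$ plus the weight-one parts of the twisted sectors. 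The first vanishes because $\Lambda$ has no roots, so $(V_\Lambda)_1$ is the Heisenberg Cartan acted on fixed-point-freely by $g$; the second vanishes as soon as $h_g > 1$, which the same formula confirms.

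The main obstacle will be the final identification $V_\Lambda \sslash \bZ_p \cong V^\natural$: having a holomorphic $c=24$ VOA with trivial weight-one subspace, one must conclude it is isomorphic to $V^\natural$ rather than merely matching its character. For $p=2$ this is essentially the original Frenkel--Lepowsky--Meurman construction. For $p \in \{3,5,7,13\}$ it requires invoking the (hard) uniqueness theorem for holomorphic $c=24$ VOAs with vanishing weight-one subspace, or constructing the isomorphism directly by matching McKay--Thompson data and using rigidity of cyclic orbifolds.
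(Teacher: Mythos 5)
Your proposal and the paper's proof are doing different jobs. The paper does not reprove the Abe--Lam--Yamada theorem at all: it cites \cite{AbeLamYamada} for the entire vertex-algebraic content, and the only thing it actually argues is the transfer of that statement from the VOA formalism to the conformal-net formalism in which this paper works (via the framed-CFT equivalence of \cite[Chapter~7]{BischoffThesis} for the $p=2$ orbifold $V_\Lambda\sslash\bZ_2\cong V^\natural$, and then strong locality \cite{CKLW} of $V^\natural$ and its subalgebras to move the isomorphisms $(V^\natural)\sslash\bZ_{2p}\cong V^\natural$ across). Since Proposition~\ref{ALYtheorem} is a statement about conformal nets, this transfer is the actual mathematical content of the paper's proof, and your proposal does not address it at all. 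Conversely, you attempt to reprove the cited VOA theorem from scratch; steps (i)--(iii) of your outline are reasonable (the cyclotomic argument for $(p-1)\mid 24$ is standard, the lifting through $\widehat{\Lambda}.\Co_0$ is handled in \cite{MR1745258}, and for a cyclic group of prime order the evaluation of $\omega$ against the cycle $\sum_k g\otimes g^k\otimes g$ is a complete invariant of $\omega|_{\bZ_p}$, so the multiplier computation does suffice for nonanomalousness).

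The genuine gap is in your step (iv). The ``uniqueness theorem for holomorphic $c=24$ VOAs with vanishing weight-one subspace'' that you propose to invoke is not a theorem; it is the still-open Frenkel--Lepowsky--Meurman uniqueness conjecture for $V^\natural$, so the identification $V_\Lambda\sslash\bZ_p\cong V^\natural$ cannot be obtained this way. The route that actually works (and is essentially what \cite{AbeLamYamada} do) runs in the opposite direction: one starts from the known order-$p$ automorphisms of $V^\natural$ of Monster class $p\rB$, forms the orbifold $V^\natural\sslash\bZ_p$, computes from the McKay--Thompson series that its weight-one Lie algebra is abelian of dimension $24$, and invokes the \emph{proven} characterization of $V_\Lambda$ as the unique holomorphic $c=24$ VOA with $24$-dimensional abelian weight-one part; the inverse orbifold then returns $V^\natural$, and uniqueness up to conjugacy of the fixed-point-free class of order $p$ in $\Co_0$ shows this accounts for every such automorphism. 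Your closing hedge about ``matching McKay--Thompson data and rigidity of cyclic orbifolds'' gestures at this but is not an argument. If you want a self-contained proof you must either carry out that reverse-orbifold argument or, as the paper does, simply cite \cite{AbeLamYamada} and then supply the formalism-transfer step that your proposal omits.
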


\begin{proof}
  Since the statement is verified in the sense of vertex algebras in \cite{AbeLamYamada}, it suffices to transfer the claim to the conformal net setting. The work on ``framed CFTs'' from \cite[Chapter~7]{BischoffThesis} verifies analytically the isomorphism $(V_\Lambda) \sslash  \bZ_2 \cong V^\natural$. With this, the claim is equivalent to the assertion that $(V^\natural)\sslash \bZ_{2p} \cong V^\natural$ for $p=3,5,7,13$. But $V^\natural$, hence all of its subalgebras, is strongly local in the sense of \cite{CKLW}, and so the vertex-algebraic isomorphism $(V^\natural)\sslash \bZ_{2p} \cong V^\natural$ transfers to the analytic setting.
\end{proof}

The primes $2$, $3$, $5$, $7$, and $13$ are precisely those primes $p$ for which $p-1$ divides $24$.  We will use Proposition~\ref{ALYtheorem} to set up various finite group T-dualities.  Let $g \in \bM$ be a generator of the $\bZ_p$-action determined by the orbifold construction $V^\natural = V_\Lambda \sslash  \bZ_p$ from Proposition~\ref{ALYtheorem}. Then $g$ is of $\bM$-conjugacy class $p\rB$, denoted $p-$ in \cite{MR554399}. The image of the dual generator $\widehat{g} \in \widehat{\Lambda}.\Co_0$ in $\Co_1$ is in $\Co_1$-conjugacy class $p\rA$ when $p\neq 2$, and when $p=2$ the image of $\widehat{g}$ in $\Co_0$ is the central element. T-duality explains the oft-observed coincidence between centralizers of elements $p\rB \in \bM$ and centralizers of elements $p\rA \in \Co_1$ (compare \cite{MR554399}):

\begin{corollary} \label{orbifold corollary}
 Let $g \in \bM$ denote an element of $\bM$-conjugacy class $p\rB$ and let $\widehat{g} \in \widehat{\Lambda}.\Co_0$ denote the T-dual element. The centralizers $C(g) \subset \bM$ and $C(\widehat{g}) \subset \widehat{\Lambda}.\Co_0$ are T-dual.
 These centralizers are:
 $$\begin{array}{c||c|c}
 p & C(g) \subset \bM & C(\widehat{g}) \subset \widehat{\Lambda}.\Co_0 \\ \hline 
 2 & 2^{1+24}.\Co_1 & 2^{24}.\Co_0 \\
 3 & 3^{1+12}.2\Suz & 3^{12}:6\Suz \\
 5 & 5^{1+6}:2\rJ_2 & 5^6:(5 \times 2\rJ_2) \\
 7 & 7^{1+4} : 2A_7 & 7^4:(7 \times 2A_7) \\
 13 & 13^{1+2} : 2A_4 & 13^2:(13\times 2A_4)\\
 \end{array}$$
As in the ATLAS \cite{ATLAS}, $\Suz$ denotes the Suzuki group and $\rJ_2 = \mathrm{HJ}$ denotes the Hall--Janko group.  \qed
\end{corollary}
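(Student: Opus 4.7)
The T-duality half of the statement follows formally from the framework of Section~\ref{t-duality} together with Proposition~\ref{ALYtheorem}. The plan is to combine the involutivity of T-duality with the isomorphism $V_\Lambda \sslash \bZ_p \cong V^\natural$ of Proposition~\ref{ALYtheorem} to produce, for each $p \in \{2,3,5,7,13\}$, an orbifold identification $V^\natural \sslash \langle g \rangle \cong V_\Lambda$ where $g$ is a chosen element of $\bM$-class $p\rB$. The observation closing Section~\ref{t-duality}, applied to the cyclic subgroup $\langle g\rangle \subset \bM$, then yields immediately that the centralizers $C(g) \subset \bM$ and $C(\widehat{g}) \subset \widehat{\Lambda}.\Co_0$ are T-dual in the sense of Proposition~\ref{t duality prop}.

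The explicit identifications in the table would be handled prime-by-prime by inspecting each side independently. On the Monster side, the centralizer $C(g) \subset \bM$ for $g$ in class $p\rB$ is standard data recorded in the ATLAS~\cite{ATLAS} and requires no further computation. On the Leech side, I would compute $C(\widehat{g}) \subset \widehat{\Lambda}.\Co_0$ directly. Write $\sigma \in \Co_0 = \rO(\Lambda)$ for the image of $\widehat{g}$: by Proposition~\ref{ALYtheorem} this is a fixed-point-free element of order $p$, equal to the central involution $-1$ when $p=2$ and otherwise a lift of the $\Co_1$-class $p\rA$. The subgroup $C(\widehat{g}) \cap \widehat{\Lambda}$ equals the $\sigma$-fixed subgroup of the torus $\widehat{\Lambda}$, which in turn equals $\hom\bigl(\Lambda/(\sigma-1)\Lambda,\rU(1)\bigr)$. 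Because $\sigma$ presents $\Lambda$ as a free $\bZ[\zeta_p]$-module of rank $24/(p-1)$ (free because $\bZ[\zeta_p]$ has class number one for the primes in question), this fixed subgroup is elementary abelian of shape $p^{24/(p-1)}$. The image of $C(\widehat{g})$ in $\Co_0$ is the centralizer $C_{\Co_0}(\sigma)$, which the ATLAS identifies as $\Co_0$, $6\Suz$, $5 \times 2\rJ_2$, $7 \times 2A_7$, and $13 \times 2A_4$ for $p = 2, 3, 5, 7, 13$ respectively. The orders and outer quotients of $C(\widehat{g})$ in the table are then forced.

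I expect the main technical point to be verifying the exact extension structure distinguishing the split (``$:$'') from the possibly-nonsplit (``$.$'') notation of the ATLAS. For odd $p$, a direct manipulation inside $\widehat{\Lambda}.\Co_0$ exhibits $C(\widehat{g})$ as the semidirect product $\widehat{\Lambda}^\sigma \rtimes C_{\Co_0}(\sigma)$, producing the colons in the table; for $p=2$ the analogous analysis produces the extension $2^{24}.\Co_0$ with $\Co_0$ acting on $\widehat{\Lambda}[2] \cong 2^{24}$ through the mod-$2$ reduction of its defining representation. These structural verifications, rather than the abstract T-duality, are where the real work lies; alternatively, the same splitting information can be extracted from the decomposition of $\omega^\natural|_{C(g)}$ via the LHS spectral sequence discussed in Section~\ref{t-duality}.
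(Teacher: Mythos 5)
Your argument is essentially the paper's: the T-duality of the two centralizers is exactly the closing observation of Section~\ref{t-duality} applied to the orbifold identification of Proposition~\ref{ALYtheorem}, and the paper then simply quotes the table from Conway--Norton and the ATLAS with no further argument (the corollary carries a \verb|\qed| and has no proof environment). Your direct computation of the Leech-side centralizer is a sound way of verifying the quoted facts, and is more detailed than anything the paper supplies. Two small remarks on it. First, freeness of $\Lambda$ over $\bZ[\zeta_p]$ (and hence the class-number-one input) is not needed: a torsion-free module $M$ over the Dedekind domain $\bZ[\zeta_p]$ is projective, and $M/(\zeta_p-1)M$ is already an $\bF_p$-vector space of dimension equal to the rank of $M$, which gives $\widehat{\Lambda}^\sigma \cong p^{24/(p-1)}$ without knowing $\Lambda$ is free. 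Second, the claim that $C(\widehat{g})$ surjects onto $C_{\Co_0}(\sigma)$ is not automatic for centralizers in group extensions; it holds here because the obstruction to correcting a lift $\tilde{h}$ of $h \in C_{\Co_0}(\sigma)$ lives in $\widehat{\Lambda}/(1-\sigma)\widehat{\Lambda}$, and $1-\sigma$ is surjective on the connected torus $\widehat{\Lambda}$ precisely because $\sigma$ is fixed-point-free on $\Lambda\otimes\bR$. You should say this, since it is the same fixed-point-freeness that controls the torsion subgroup. Your identification of the split-versus-nonsplit structure as the only genuinely nontrivial content is accurate: the paper treats these splittings as known as well, later citing Ivanov for the non-splitness of $2^{24}.\Co_0$ and invoking the odd-$p$ splittings in Lemmas~\ref{moonshine lemma 5 7 13} and~\ref{omega natural order 3} without further comment, so supplying either your direct manipulation or the spectral-sequence extraction would be a genuine (if routine) addition rather than a reconstruction.
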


\begin{lemma} \label{moonshine lemma 5 7 13}
  The order of $\omega^\natural$ is not divisible by $5$, $7$, or $13$.
\end{lemma}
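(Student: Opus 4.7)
The plan is prime-by-prime using T-duality.  For each $p \in \{5,7,13\}$, let $g \in \bM$ be of class $p\rB$.  A direct order count via Corollary~\ref{orbifold corollary} shows that $C(g) = p^{1+2k}:H$ (with $k = 12/(p-1)$ and $H = 2\rJ_2, 2A_7, 2A_4$ respectively) contains the Sylow $p$-subgroup of $\bM$; so by Lemma~\ref{transfer-restriction}, it suffices to show $(\omega^\natural|_{C(g)})_{(p)} = 0$.

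The crucial observation is that the T-dual group $C(\widehat g) = p^{2k}:(p \times H)$ splits as a direct product $\bZ_p \times J$, where $J := p^{2k}:H$: indeed, $\langle\widehat g\rangle$ is central in $C(\widehat g)$, and its image $\bar g \in \Co_0$ acts trivially on its own fixed-point subgroup $\widehat{\Lambda}^{\bar g} = p^{2k}$.  In particular, the extension class of $C(\widehat g) = \bZ_p.J$ is trivial.  Applying the T-duality dictionary of Section~\ref{t-duality}: the middle datum of $\omega^\natural|_{C(g)}$ (an element of $\H^2(J,\widehat{\bZ_p})$) is precisely this vanishing extension class; combined with $\omega^\natural|_{\langle g\rangle} = 0$ (nonanomalousness of the cyclic orbifold), it follows that $\omega^\natural|_{C(g)}$ equals the inflation, along $C(g) \epi J$, of the class $\omega_{V_\Lambda}|_J \in \H^3(J,\rU(1))$ obtained by restricting $\omega_{V_\Lambda}|_{C(\widehat g)}$ to the direct factor $J$.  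Thus $(\omega^\natural|_{C(g)})_{(p)} = 0$ whenever $(\omega_{V_\Lambda}|_J)_{(p)} = 0$.

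To handle the latter, apply the LHS spectral sequence for $J = p^{2k}:H$.  The restriction $\omega_{V_\Lambda}|_{p^{2k}}$ vanishes, since $p^{2k} \subset \widehat{\Lambda}$ and the lattice-momentum action of $\widehat\Lambda$ on $V_\Lambda$ is always nonanomalous.  So $\omega_{V_\Lambda}|_J$ is assembled from the pullback of $\omega_{V_\Lambda}|_H \in \H^3(H,\rU(1))$ together with mixed pieces in $\H^i(H,\H^j(p^{2k},\rU(1)))$ for $i+j=3$ and $j \geq 1$.  For $p=13$: $|H|_{(13)} = |2A_4|_{(13)} = 1$ kills $\H^i(H,M)_{(13)}$ for all $i \geq 1$ and all modules $M$, handling both the pullback and the mixed pieces at once.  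For $p=7,5$: the Sylow $p$-subgroup of $H$ is generated by $p$-elements whose images in $\Co_0$ lie in the fixed-point-free class $p\rA$ (verifiable from the ATLAS), so by Proposition~\ref{ALYtheorem} these admit nonanomalous lifts; this forces $(\omega_{V_\Lambda}|_H)_{(p)} = 0$, and one further exploits the $H$-module structure of $\widehat{\Lambda}^{\bar g}$ to kill the mixed pieces.

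The main obstacle will be the mixed-term analysis when $p=5$, where the Sylow $5$-subgroup $P_H \subset H = 2\rJ_2$ has rank $2$ and acts nontrivially on $\widehat{\Lambda}^{\bar g} = 5^6$; vanishing of the relevant classes in $\H^i(P_H, \H^j(p^{2k},\rU(1)))_{(5)}$ requires explicit information about the $\bF_5$-representation of $P_H$ on $\widehat{\Lambda}^{\bar g}$, obtainable from the ATLAS and the embedding $2\rJ_2 \subset \Co_0$.
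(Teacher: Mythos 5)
Your overall strategy---T-dualize to $V_\Lambda$, use the splitting $C(\widehat g)\cong \widehat{p}\times J$ to conclude that $\omega^\natural|_{C(g)}$ is inflated from a class on $J=p^{2k}:H$, then run the LHS spectral sequence for $J$---is the same as the paper's. But two steps have genuine gaps. First, your justification for $\omega_{V_\Lambda}|_{p^{2k}}=0$ is false: the action of $\widehat\Lambda\cong\rU(1)^{24}$ on $V_\Lambda$ is \emph{not} nonanomalous. Its anomaly is the Leech pairing viewed as a class in $\H^4(\rB\rU(1)^{24},\bZ)=\Sym^2(\bZ^{24})$ (see Section~\ref{closing remarks}), and restrictions of this class to finite subgroups of the torus are typically nonzero---$\omega_0\in\H^3(2^{24},\rU(1))$ is the nonzero mod-$2$ Leech pairing, and the lattice example at the end of Section~\ref{super case} exhibits anomalous $\bZ_2$'s inside $\widehat L$. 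The vanishing of $\omega_{V_\Lambda}|_{p^{2k}}$ is true, but for the paper's reason: this restriction is necessarily $H$-invariant, and $\H^0(H,\H^3(p^{2k},\rU(1)))=0$ because $p^{2k}$ is symplectically but not symmetrically self-dual over $H$ (killing the $\Sym^2$ part) while the central involution acts by $-1$ on $\Alt^3(p^{2k})$ (killing the rest).

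Second, the mixed terms are where the real work lies, and you leave them undone. The entry $\H^2(H,\H^1(p^{2k},\rU(1)))$ vanishes because the central $2\subset H$ acts by $-1$ on $\H^1(p^{2k},\rU(1))\cong p^{2k}$---a trick you never invoke. The entry $\H^1(H,\Alt^2(p^{2k}))$ is the hard one: for $p=13$ it dies by coprimality as you say, but for $p=7$ the paper enlarges $C(g)$ to $7^{1+4}:(3\times 2A_7)$ inside the normalizer of $\langle g\rangle$ precisely so that the central $3$, acting by scalars on $7^4$, kills all cohomology with coefficients in $\Alt^2(7^4)$; and for $p=5$ the vanishing of $\H^1(2\rJ_2,\Alt^2(5^6))$ is a machine computation (Cohomolo), not something that falls out of ATLAS character data. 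Finally, your route to $(\omega_{V_\Lambda}|_H)_{(p)}=0$ via Proposition~\ref{ALYtheorem} needs every order-$p$ element of a Sylow $p$-subgroup of $H$ to act fixed-point-freely on $\Lambda$, which you assert without verification; the paper sidesteps this by observing that the $H$-action on $V_\Lambda$ factors through $2^{24}.\Co_0$ and invoking Theorem~\ref{conway theorem}, whose $p$-part is trivial for $p\geq 5$.
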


\begin{proof} 
Throughout this proof, we let $p=5$, $7$, or $13$ and $d = 24/(p-1)$.
For $p=5$ and $13$, we work with the T-dual pairs $G = C(g) \subset \bM$ and $\widehat{G} = C(\widehat{g}) \subset \widehat{\Lambda}.\Co_0$ from Corollary~\ref{orbifold corollary}. For $p=7$, we need a pair of slightly larger groups. The normalizer $N(7\rB)$ of an element of $\bM$-conjugacy class $7\rB$ has shape $7^{1+4}:(3 \times 2S_7)$; we will use the subgroup $G = 7^{1+4}:(3 \times 2A_7)$ and its dual $\widehat{G} = 7^4:(7:3 \times 2A_7)$. The factor of $3$ acts on $7^{4}$ by scalars and rescales the symplectic pairing. In all cases, we set $G = p.J$ and $\widehat{G} = \widehat{p}.J$, and we have $J = p^d : 2X$ with:
$$\begin{array}{c|c}
p & X \\ \hline
5 & \rJ_2 \\
7 & 3 \times A_7 \\
13 & A_4 \\
\end{array}$$
In all cases $G = p^{1+d}:X$ contains the $p$-Sylow of $\bM$ and so the $p$-part of the order of $\omega^\natural$ is detected by the order of $\omega^\natural|_G$.

Following the notation from Proposition~\ref{t duality prop}, let $\kappa \in \H^2(J,\bZ_p)$ classify the extension $G = p.J$ and expand $\omega^\natural|_G = \alpha + \beta$ with $\alpha \in \H^2(J,\bZ_p)$ classifying the extension $\widehat{G} = \widehat{p}.J$. 
In all cases, the extension $\widehat{G} = \widehat{p}:J$ splits, and so $\alpha = 0$. It follows that $\omega^\natural|_{G} = \beta$ is pulled back from a class $\beta \in \H^3(J,\rU(1))$ and $\widehat{\omega} = \kappa \oplus \beta \in \H^2(J,p) \oplus \H^3(J,\rU(1)) \subset \H^3(\widehat{G},\rU(1))$. By construction,
$\widehat{\omega}$ is the anomaly of the action of $\widehat{G}$ on $V_\Lambda$, and $\beta = \widehat{\omega} |_J$.
We will compute $\H^\bullet(J,\rU(1))$ by using the LHS spectral sequence $\H^\bullet(2X,\H^\bullet(p^d,\rU(1))) \Rightarrow \H^\bullet(J,\rU(1))$.

The K\"unneth formula gives $\H^1(p^d,\rU(1)) \cong p^d$, $\H^2(p^d,\rU(1)) \cong \Alt^2(p^d)$, and $\H^3(p^d,\rU(1))\cong \Sym^2(p^d).\Alt^3(p^d)$. The central $2 \subset 2X$ acts nontrivially on $\H^1(p^d,\rU(1))$ and on $\Alt^3(p^d)$, and so $2X$ can have no cohomology with these coefficients. The module $p^d$ carries a symplectic form but is not symmetrically self-dual over $2X$, and so $\Sym^2(p^d)$ also has no fixed points; thus $\H^0(2X,\H^3(p^d,\rU(1))) = 0$. 

We claim furthermore that $\H^1(2X,\Alt^2(p^d)) = 0$. 
The case $p=13$ is automatic since $13$ does not divide $|2A_4|$. For $p=7$, note that the central $3 \subset 2X$ acts nontrivially on $\Alt^2(7^4)$, and so $\H^i(3 \times 2A_7, 7^5) = 0$ for all~$i$. For $p=5$, one can check $\H^1(2\rJ_2,\Alt^2(5^6)) = 0$ using Holt's GAP program Cohomolo.

It follows that the restriction $\H^3(J,\rU(1)) \to \H^3(2X,\rU(1))$ is an isomorphism, and so it suffices to compute the order of $\widehat{\omega}|_{2X}$, where again $\widehat{\omega}$ is the anomaly for the Leech lattice conformal field theory $V_\Lambda$.
But the action of $2X$ on $V_\Lambda$ factors through a group of shape $2^{24}.\Co_0$ --- indeed, the centralizer in $\Aut(V_\Lambda) = \widehat{\Lambda}.\Co_0$ of the central $2 \subset 2X$ has this shape --- and so $\widehat{\omega}|_{2X}$ is the restriction of a class in $\H^3(2^{24}.\Co_0,\rU(1))$. By Theorem~\ref{conway theorem}, $\H^3(2^{24}.\Co_0,\rU(1))_{(p)} = 0$ for $p\geq 5$.
\end{proof}

\begin{lemma} \label{omega natural order 3}
  $\omega^\natural$ has order $3 \times 2^k$ for some $k$.
\end{lemma}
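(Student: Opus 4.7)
The plan is to use T-duality (Corollary~\ref{orbifold corollary}) between $G = C(g) = 3^{1+12}.2\Suz \subset \bM$, which contains a $3$-Sylow of $\bM$, and its T-dual $\widehat{G} = 3^{12}:6\Suz \subset \widehat{\Lambda}.\Co_0$. Setting $J = G/\langle g\rangle \cong 3^{12}:2\Suz$, I would analyze $\omega^\natural|_G$ via the LHS spectral sequence for $G = 3.J$, whose contributions in total degree $3$ are a class $\gamma \in \H^3(3,\rU(1))$, a class $\alpha \in \H^2(J,\widehat{3})$, and a class $\beta \in \H^3(J,\rU(1))$. The top piece $\gamma = \omega^\natural|_{\langle g\rangle}$ vanishes by Proposition~\ref{ALYtheorem}, and by the construction in Proposition~\ref{t duality prop} the middle piece $\alpha$ is the extension class of $\widehat{G}$ as a central extension of $J$ by $\widehat{3}$.

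For divisibility of the order by $3$: the central extension $\widehat{3}\to 6\Suz \twoheadrightarrow 2\Suz$ sitting inside $\widehat{G}$ is nonsplit, since $6\Suz$ is a perfect group realizing the $3$-part of the Schur multiplier of $\Suz$. Restricting $\alpha$ along the inclusion $2\Suz \hookrightarrow J$ therefore yields a nonzero element of $\H^2(2\Suz,\widehat{3}) \supseteq \bZ_3$, forcing $\alpha$ itself to be nonzero of order $3$. Since $\d_2\alpha = \langle\alpha\cup\kappa\rangle$ must vanish (as the honest cohomology class $\omega^\natural$ exists), $\alpha$ survives to $E_\infty$ and contributes a nontrivial summand to $\omega^\natural|_G$, so $3$ divides the order of $\omega^\natural$.

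For the upper bound, namely that $9$ does not divide, I need to show that $3\beta$ vanishes at the prime $3$. I would analyze $\H^3(J,\rU(1))_{(3)}$ via the LHS for $J = 3^{12}:2\Suz$. Because the central involution of $2\Suz$ (coming from $Z(\Co_0)$) acts as $-1$ on the standard $\bF_3^{12}$-module, $2\Suz$-cohomology vanishes with coefficients in every odd tensor construction of $3^{12}$, leaving only $\Sym^2(3^{12})^{\Suz}$, $\H^1(\Suz,\Alt^2(3^{12}))$, and $\H^3(2\Suz,\rU(1))_{(3)}$ as possible contributions in total degree $3$ --- each an $\bF_3$-vector space. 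Independently, via T-duality $\beta$ is identified with the analogous component of $\widehat{\omega}|_{\widehat{G}}$, which by Theorem~\ref{conway theorem} is the restriction of $\frac{p_1}2 \in \H^3(\Co_0,\rU(1)) \cong \bZ_{24}$; its $3$-part therefore has order dividing $3$.

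The main obstacle is making the upper bound fully rigorous: one must verify that $\H^3(2\Suz,\rU(1))_{(3)}$ has exponent $3$ (checkable in principle via Holt's Cohomolo, as in the $p=5$ case of Lemma~\ref{moonshine lemma 5 7 13}) and rule out hidden spectral-sequence extensions in the LHS for $G = 3.J$ that could combine exponent-$3$ pieces into elements of order $9$. T-duality is the essential ingredient throughout, converting a hard computation inside the Monster into a tractable one on the Leech-lattice side.
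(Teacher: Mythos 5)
Your lower bound agrees with the paper: the nonsplitness of $\widehat{G} = \widehat{3}.J$ (equivalently, of the central extension $6\Suz \to 2\Suz$ sitting over $2\Suz \subset J$) forces $\alpha \neq 0$, hence $3$ divides the order of $\omega^\natural|_G$. That part is fine.

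The upper bound, however, has a genuine gap, and it is exactly the one you flag: with the filtration coming from the LHS spectral sequence for $G = 3.J$, knowing that each graded piece of $\omega^\natural|_G$ has order dividing $3$ does not preclude $\omega^\natural|_G$ having order $9$, and nothing in your argument rules this out. The paper sidesteps the extension problem rather than solving it, by running the LHS spectral sequence for $G$ with respect to the \emph{full} normal subgroup $3^{1+12}$ and quotient $2\Suz$. In that filtration all but one layer dies at $p=3$: the $\H^2(2\Suz, \H^1(3^{1+12},\rU(1)))$ term vanishes because the central involution acts by $-1$ on $3^{12}$; the image of $\omega^\natural|_G$ in $\H^0(2\Suz,\H^3(3^{1+12},\rU(1)))$ vanishes because $\omega^\natural|_{3^{1+12}}$ is pulled back from $\H^3(3^{12},\rU(1)) \cong \Sym^2 \oplus \Alt^3$, which has no $2\Suz$-invariants; and $\H^3(2\Suz,\rU(1))_{(3)} = 0$. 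What remains is a single subquotient of $\H^1(2\Suz,\H^2(3^{1+12},\rU(1)))$, which has exponent $3$, so no extension ambiguity survives. Two further problems with your route: your identification of $\beta$ with a restriction of $\frac{p_1}{2} \in \H^3(\Co_0,\rU(1))$ is not available, since $\alpha \neq 0$ means $J$ is not a subgroup of $\widehat{G}$ (so ``$\beta = \widehat{\omega}|_J$'' has no meaning here), and $\widehat{G}$ meets the torus $\widehat{\Lambda}$ nontrivially, so $\widehat{\omega}|_{\widehat{G}}$ is not pulled back from $\Co_0$. And the input $\H^3(\Suz,\rU(1))_{(3)} = 0$ is not a push-button Cohomolo call as in the $p=5$ case (where only an $\H^1$ was needed): the paper has to restrict to the maximal subgroup $3^5{:}M_{11}$ containing the $3$-Sylow and compute $\H^i(M_{11},-)$ for several modules, including pinning down which of the two $5$-dimensional $\bF_3$-modules of $M_{11}$ actually occurs.
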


\begin{proof}
Set $p = 3$, $d = 12$, $J = 3^{12}:2\Suz$, $G = 3^{1+12}.2\Suz$, and $\widehat{G} = 3^{12}:6\Suz$.
It suffices to show that the component of $\omega^\natural|_G$ in $\H^3(G,\rU(1))_{(3)}$ has order exactly $3$.
Unlike in the $p\geq 5$ case, the extension $\widehat{G} = 3.J$ does not split, and so $\alpha \neq 0 \in \H^2(J,\bZ_3)$. It follows that $\omega^\natural|_G$ is non-zero and has order divisible by $3$.

We study the cohomology of the group $G = 3^{1+12}.2\Suz$ using a LHS spectral sequence. Set $E = p^d = 3^{12}$ and $3E = 3^{1+12}$. Then $E$ is symplectically but not symmetrically self-dual over $2\Suz$. We have $\H^1(3E,\rU(1)) \cong E$ and $\H^2(3E,\rU(1)) \cong \Alt^2(E) / 3 = 3^{64}.3$. It follows that $\H^\bullet(2\Suz,\H^1(3E,\rU(1))) = 0$, and the spectral sequence has $E_2$ page: 
$$ \begin{array}{cccc}
 \H^0(2\Suz,\H^3(3E,\rU(1))) & \\
 * & \H^1(2\Suz,3^{64}.3) &  \\
 0 & 0 & 0 & 0  \\
 * & * & * & \H^3(2\Suz,\rU(1))
\end{array} $$
Furthermore, $3E$ is T-dual to a group of shape $E \times 3$. It follows that 
$\omega^\natural|_{3E}$ is pulled back from $\H^3(E,\rU(1)) \cong \Sym^2(E) \oplus \Alt^3(E)$. As in Lemma~\ref{moonshine lemma 5 7 13}, $\H^0(2\Suz,\Sym^2(E) \oplus \Alt^3(E)) = 0$. Therefore $\omega^\natural|_G$ lives in an extension of subquotients of $\H^1(2\Suz,3^{64}.3)$ and $\H^3(2\Suz,\rU(1))$.

Since $\H^1(2\Suz,3^{64}.3)$ has exponent $3$, to complete the proof it suffices to show that $\H^3(2\Suz,\rU(1))_{(3)} = \H^3(\Suz,\rU(1))_{(3)} = 0$.
According to the ATLAS \cite{ATLAS}, the $3$-Sylow in $\Suz$ is contained in a maximal subgroup of shape $3^{5}:M_{11}$. The group $M_{11}$ has two 5-dimensional modules over $\bF_3$, dual to each other. Matching the ATLAS's letters, we will call these modules $3^{5a}$ and $3^{5b}$. Holt's GAP program Cohomolo quickly computes
\begin{gather*}
  \H^1(M_{11},3^{5a}) = 0, \\ \H^1(M_{11},3^{5b}) = 3.
\end{gather*}
But $\H^2(\Suz,\rU(1))_{(3)} = 3$, since the Schur multiplier of $\Suz$ is $\bZ_6$. Since $\H^2(M_{11},\rU(1))_{(3)} = 0$, and $\H^2(\Suz,\rU(1))_{(3)} \subset \H^2(3^5:M_{11},\rU(1))$, we must have $\H^1(M_{11},\H^1(3^5,\rU(1))) = 3$, which is possibly only when $\H^1(3^5,\rU(1)) = 3^{5b}$, which forces the maximal subgroup of $\Suz$ to be $3^{5a}:M_{11}$.

Cohomolo also confirms that $$\H^2(M_{11},\H^1(3^{5a},\rU(1))) = \H^2(M_{11},3^{5b}) = 0.$$
Furthermore, since $3^{5a}$ is not self-dual and does not admit an invariant $3$-form, $\H^0(M_{11},\H^3(3^{5a},\rU(1))) = 0$. 
One can quickly we compute the $10$-dimensional $M_{11}$-module $\H^2(3^{5a},\rU(1))$ and check that $$\H^1(M_{11},\H^2(3^{5a},\rU(1))) = 0.$$
Finally, that $\H^3(M_{11},\rU(1))_{(3)} = 0$ is classically known, and quick to compute with Ellis's GAP program HAP.
This confirms that that $\H^3(2\Suz2,\rU(1))_{(3)} = 0$. 
 \end{proof}

\begin{lemma} \label{omega 3c}
  The restriction $\omega^\natural|_{\langle3\rC\rangle}$ to a cyclic subgroup of $\bM$ generated by an element conjugacy class $3\rC$ is nonzero.
\end{lemma}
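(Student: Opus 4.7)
The plan is to invoke the modular-multiplier interpretation of restriction-to-cyclic-subgroup explained at the end of Section~\ref{physical picture}. For any $g\in\bM$ of order $n$, the class $\omega^\natural|_{\langle g\rangle}\in\H^3(\bZ_n,\rU(1))\cong\bZ_n$ is the eigenvalue by which $ST^nS^{-1}\in\mathrm{SL}_2(\bZ)$ acts on the McKay--Thompson series $T_g(\tau)=\tr(q^{L_0-1}g;V^\natural)$. Applying $S^{-1}$ to land in the $g$-twisted sector, and then noting that $T^n$ multiplies the twisted-sector partition function by $\exp(2\pi i\,n\,(h_g-c/24))$, together with $c=24$ for $V^\natural$, shows this multiplier equals $\exp(2\pi i\,n\,h_g)$, where $h_g$ is the lowest conformal weight of the $g$-twisted sector.

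Applied to $n=3$ and $g\in 3\rC$, the claim reduces to $3h_{3C}\notin\bZ$ --- equivalently, to the statement that the $\bZ_3$-orbifold $V^\natural\sslash\langle 3C\rangle$ is obstructed. This is directly visible in the Conway--Norton tabulation~\cite{MR554399} of the moonshine McKay--Thompson series: the 3C entry records that $T_{3C}$ carries a non-trivial cube-root-of-unity multiplier under a suitable element of $\mathrm{SL}_2(\bZ)$, equivalently that $h_{3C}\notin\tfrac13\bZ$. This contrasts with class 3B, for which $V^\natural\sslash\langle 3B\rangle\cong V_\Lambda$ by Proposition~\ref{ALYtheorem}, forcing $3h_{3B}\in\bZ$ and hence $\omega^\natural|_{\langle 3B\rangle}=0$.

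The main obstacle is a careful matching of sign and normalization conventions between Section~\ref{physical picture} and the moonshine literature. A convention-free shortcut is available: Lemma~\ref{omega natural order 3} tells us that the 3-part of $\omega^\natural$ has order exactly~$3$, so the restriction map $\H^3(\bM,\rU(1))_{(3)}\to\H^3(\langle 3C\rangle,\rU(1))\cong\bZ_3$ is a homomorphism $\bZ_3\to\bZ_3$ that is either zero or an isomorphism. To rule out the zero case it suffices to exhibit \emph{any} nontrivial cube root of unity appearing as an $\mathrm{SL}_2(\bZ)$-multiplier of $T_{3C}$, which is immediate from the explicit eta-product expression for $T_{3C}$ recorded in~\cite{MR554399}.
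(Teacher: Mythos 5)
Your argument is correct in substance, but it takes a genuinely different route from the paper's. The paper proves this lemma purely group-cohomologically: it locates a subgroup $6\Suz \subset G = 3^{1+12}.2\Suz$, uses T-duality to see that $\omega^\natural|_{6\Suz}$ has nontrivial $3$-part, observes that this restriction factors through $C(2\rB) = 2^{1+24}.\Co_1$, and then invokes \cite[Lemma 3.1]{JFT}, which says the nonzero elements of $\H^3(\Co_1,\rU(1))_{(3)}$ are detected on the class $3\rD$ of $\Co_1$ --- whose lift fuses to $3\rC$ in $\bM$. Your route is instead the conformal-field-theoretic one: identify $\omega^\natural|_{\langle g\rangle}$ with the multiplier of the McKay--Thompson series $T_{3\rC}$ under $ST^3S^{-1}$, and read off nontriviality of that multiplier from \cite{MR554399}. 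The paper explicitly endorses exactly this alternative in the remark immediately following its proof (citing \cite{MO69222}), so your approach is legitimate; what the paper's version buys is independence from the moonshine literature's multiplier computations, keeping the whole of Section~\ref{small primes} internal to finite group cohomology plus T-duality, whereas yours is shorter but imports the known multiplier system of $T_{3\rC}$ as an external input. Two small points of care: your ``convention-free shortcut'' via Lemma~\ref{omega natural order 3} does no real work --- any element of order dividing $3$ restricts to $\H^3(\langle 3\rC\rangle,\rU(1))\cong\bZ_3$ either trivially or injectively, so you still must exhibit the nontrivial multiplier --- and ``any nontrivial $\mathrm{SL}_2(\bZ)$-multiplier'' should really be the multiplier of the specific element $ST^3S^{-1}$; this is harmless here only because $T$ acts trivially on $T_g$ (as $c=24$), so the character of the relevant stabilizer is generated by the $ST^3S^{-1}$-eigenvalue.
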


\begin{proof}
  The group $G = 3^{1+12}.2\Suz \subset \bM$ in the proof of Lemma~\ref{omega natural order 3} has a subgroup of shape $6\Suz$ centralizing any order-2 lift of the central $2 \subset 2\Suz$. It is T-dual to a subgroup of shape $6\Suz \subset \widehat{G} \subset \Aut(V_\Lambda)$, and so $\omega^\natural|_{6\Suz}$ has nontrivial $3$-part.  The central $2 \subset 6\Suz$ fuses in $\bM$ to conjugacy class $2\rB$, and so   
  the restriction $\omega^\natural|_{6\Suz}$ factors through the centralizer $C(2\rB) = 2^{1+24}.\Co_1$.  By \cite[Lemma 3.1]{JFT}, the nonzero elements of $\H^3(2^{1+24}.\Co_1)_{(3)} = \H^3(\Co_1)_{(3)}$ have nontrivial restriction to conjugacy class $3\rD \in \Co_1$, hence to its lift $3\rD \in 2^{1+24}.\Co_1)$, which fuses to $3\rC \in \bM$. (These conjugacy class fusions are contained in the Character Table libraries in GAP.)
\end{proof}

Lemma~\ref{omega 3c} can also verified using conformal field theoretic methods: letting $g \in \bM$ have conjugacy class $3\rC$, the $g$-twined character $\tr(q^{L_0 - 1}g; V^\natural)$ is known to have a nontrivial multiplier (see e.g.\ \cite{MO69222}), and so $\omega^\natural|_{\langle g \rangle}$ is nonzero by the discussion preceding Example~\ref{example permutation}.

\subsection{The prime \emph{p} = 2} \label{monster 2}

To complete the proof of Theorem~\ref{moonshine theorem}, we must study the 2-part of $\omega^\natural$: we wish to show that the order of $\omega^\natural$ is $8 \times(\text{odd})$. The 2-Sylow in $\bM$ is contained in a maximal subgroup of shape $G = 2^{1+24}.\Co_1$, the centralizer of an element of $\bM$ of conjugacy class $2\rB$. As mentioned already in Proposition~\ref{ALYtheorem}, $G$ is T-dual to the centralizer in $\Aut(V_\Lambda) = \widehat{\Lambda}. \Co_0$ of any order-$2$ lift of the central element element of $\Co_0$. This dual group has shape $\widehat{G} = 2^{24}.\Co_0$. It is known that the extension $\widehat{G}$ does not split~\cite{Ivanov09}.

\begin{lemma}\label{even part 1}
  $8\omega^\natural|_G$ is pulled back from $\Co_1$.
\end{lemma}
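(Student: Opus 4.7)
The plan is to apply the Lyndon--Hochschild--Serre spectral sequence to the extension $1 \to E \to G \to \Co_1 \to 1$ with $E = 2^{1+24}$. The statement ``$8\omega^\natural|_G$ is pulled back from $\Co_1$'' is equivalent to saying that $8\omega^\natural|_G$ lies in the image of the inflation map $\H^3(\Co_1, \rU(1)) \to \H^3(G, \rU(1))$, which coincides with the deepest filtered piece $F^3 = E_\infty^{3,0}$ of the LHS filtration on $\H^3(G, \rU(1))$. So I need to bound the exponent of $\omega^\natural|_G$ in the quotient $\H^3(G, \rU(1))/F^3$.

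The quotient $\H^3(G, \rU(1))/F^3$ is a successive extension of the three graded pieces $E_\infty^{p, 3-p}$ for $p = 0, 1, 2$, each a subquotient of $\H^p(\Co_1, \H^{3-p}(E, \rU(1)))$. The crucial input is that $\H^q(E, \rU(1))$ has exponent dividing $2$ for $q \geq 1$ as an abelian group. This follows from the description of $E = 2^{1+24}_+$ as a central product of twelve copies of the dihedral group $D_8$: since $\H^{>0}(D_8, \bZ)$ has exponent $2$ in every degree, iterated application of the K\"unneth formula together with the LHS spectral sequence for the central quotient propagates this property to $E$. (For the specific low degrees $q \in \{1,2,3\}$ that are needed, one may alternatively check directly using the LHS for $1 \to Z(E) \to E \to E/Z(E) \to 1$, or by a computer calculation.) Once the exponent claim is in hand, each graded piece $E_\infty^{p, 3-p}$ inherits exponent dividing $2$, so $\H^3(G, \rU(1))/F^3$ has exponent dividing $2^3 = 8$, and therefore $8 \omega^\natural|_G \in F^3$, as desired.

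The main obstacle is cleanly verifying the exponent-$2$ property for $\H^q(E, \rU(1))$ in degrees $q = 1, 2, 3$; the rest is routine spectral-sequence bookkeeping. As a consistency check, the T-dual group $\widehat{G} = 2^{24}.\Co_0$ carries the dual anomaly $\widehat{\omega}^\natural|_{\widehat G}$, which is pulled back from $\H^3(\Co_0, \rU(1)) \cong \bZ_{24}$ by Theorem~\ref{conway theorem} and therefore has $2$-part of order exactly~$8$; this agrees with the exponent bound on $G$ obtained above and suggests the bound of~$8$ is sharp (confirmed by subsequent results).
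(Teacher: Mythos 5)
Your overall strategy --- run the LHS spectral sequence for $E = 2^{1+24} \to G \to \Co_1$ and bound the exponent of the image of $\omega^\natural|_G$ in $\H^3(G,\rU(1))/F^3$ --- is the same as the paper's, but your key lemma is false, and the failure is exactly where the content of the statement lives. You claim $\H^q(E,\rU(1))$ has exponent $2$ for all $q \geq 1$ because $\H^{>0}(D_8,\bZ)$ allegedly has exponent $2$ in every degree. Already for $D_8$ itself this is wrong in degree $4$: if $V$ is the faithful $2$-dimensional representation induced from a faithful character $\chi$ of the rotation subgroup $\bZ_4$, then $c_2(V)$ restricts on $\bZ_4$ to $-c_1(\chi)^2$, a generator of $\H^4(\bZ_4,\bZ) \cong \bZ_4$, so $\H^4(D_8,\bZ) = \H^3(D_8,\rU(1))$ contains an element of order $4$. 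The same is true for $E = 2^{1+24}$: the paper records that $\H^3(2^{1+24},\rU(1))$ has exponent $4$ (while $\H^1$ and $\H^2$ do have exponent $2$). With the correct exponents, your bookkeeping only yields that $16\alpha$ is pulled back from $\Co_1$ for an \emph{arbitrary} class $\alpha \in \H^3(G,\rU(1))$, not $8\alpha$.

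The missing idea is that the improvement from $16$ to $8$ cannot be a statement about arbitrary classes; it requires a specific property of $\omega^\natural$. Since the only graded piece of exponent $4$ is $E_\infty^{0,3} \subset \H^0(\Co_1,\H^3(E,\rU(1)))$, the obstruction to $8\alpha$ being pulled back is precisely whether $\alpha|_{E}$ has order $4$ in $\H^3(E,\rU(1))$. The paper kills this for $\omega^\natural$ by observing that $\omega^\natural|_{2^{1+24}}$ implements the T-duality between $2^{1+24}$ and $2\times 2^{24}$, which forces it to have order $2$. Your closing ``consistency check'' invokes exactly this T-duality, but only as a sanity check on the answer; it is in fact the essential input, and without it your argument does not close.
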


\begin{proof}
  The LHS spectral sequence for $G = 2^{1+24}.\Co_1$ asserts that $\H^3(G,\rU(1))$ is an extension of subquotients of:
  \begin{enumerate}
    \item $\H^0(\Co_1,\H^3(2^{1+24},\rU(1)))$, and $\H^3(2^{1+24},\rU(1))$ has exponent $4$;
    \item $\H^1(\Co_1,\H^2(2^{1+24},\rU(1)))$, and $\H^2(2^{1+24},\rU(1)) = 2^{274}.2$ has exponent $2$;
    \item $\H^2(\Co_1,\H^1(2^{1+24},\rU(1)))$, and $\H^1(2^{1+24},\rU(1)) = 2^{24}$ has exponent $2$;
    \item $\H^3(\Co_1,\rU(1))$, which by \cite{JFT} is isomorphic to $\bZ_{12}$.
  \end{enumerate}
  It follows that for any class $\alpha \in \H^3(G,\rU(1))$, $16 \alpha$ is pulled back from $\H^3(\Co_1)$.
  Moreover, the only way for $8\alpha$ to fail to be pulled back from  $\H^3(\Co_1)$ is if $\alpha|_{2^{1+24}}$ has order $4$. But $\omega^\natural|_{2^{1+24}}$ implements a T-duality between $2^{1+24}$ and $2 \times 2^{24}$, implying that $\omega^\natural|_{2^{1+24}}$ has order $2$.
\end{proof}

To complete the proof of Theorem~\ref{moonshine theorem}, we use the same binary dihedral group used in \cite{JFT}. By Proposition~\ref{ALYtheorem}, $\Co_0$ has a subgroup (centralizing $7\rA$) of shape $2A_7$. Inside $A_7$, choose a dihedral group $D_8$ of order $8$ --- there is a unique conjugacy class of such subgroups. Its lift to $2A_7$ is a binary dihedral group $2D_8$ of order $16$. This is a McKay group --- the corresponding Dynkin diagram is $D_6$ --- and its third cohomology is $\H^3(2D_8,\rU(1)) \cong \bZ_{16}$. In \cite{JFT} it is shown that the restriction map $\H^3(\Co_0,\rU(1))_{(2)} \to \H^3(2D_8,\rU(1))$ is an injection onto the even subgroup $\bZ_8 \subset \bZ_{16}$.

This binary dihedral group is also naturally a subgroup of $\bM$, since it lives inside the centralizer $7^{1+4}:2A_7$ of an element of $\bM$-conjugacy class $7\rB$. The central $2 \subset 2D_8$ is a lift of the central element of $\Co_0$.
\begin{lemma}\label{even part 2}
  $\omega^\natural|_{2D_8}$ has order $8$.
\end{lemma}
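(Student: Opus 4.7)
The plan is to use the $p=7$ finite-group T-duality from Corollary~\ref{orbifold corollary} to reduce the computation to one inside $\H^3(\Co_0,\rU(1))$, where Theorem~\ref{conway theorem} and the input from \cite{JFT} recalled just before the lemma can be applied directly.

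I begin with a general observation about the T-duality construction of Section~\ref{t-duality}: \emph{if $G = p.J$ has T-dual $\widehat{G} = \widehat{p}.J$, and $H \subset J$ is a subgroup with $\gcd(|H|,p)=1$, then the T-duality restricts trivially to~$H$.} Indeed, since $\H^2(H,\bZ_p) = 0$ by order considerations, the extension classes $\kappa \in \H^2(J,\bZ_p)$ and $\alpha \in \H^2(J,\widehat{\bZ_p})$ classifying $G$ and $\widehat{G}$ both become zero on $H$, so $H$ has canonical lifts into each of $G$ and $\widehat{G}$. Likewise the defining equation $\d\beta = \langle\alpha\cup\kappa\rangle$ restricts to make $\beta|_H$ an honest cocycle, and both $\omega|_H$ and $\widehat{\omega}|_H$ equal the inflation of $[\beta|_H] \in \H^3(H,\rU(1))$.

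Applying this with $p=7$, $G = C_\bM(7\rB) = 7^{1+4}{:}2A_7$, and $\widehat{G} = 7^4{:}(7\times 2A_7) \subset \Aut(V_\Lambda)$ --- and noting $|2D_8|=16$ is coprime to~$7$, with $2D_8 \subset 2A_7 \subset J$ in both cases --- I conclude $\omega^\natural|_{2D_8}$ coincides with the restriction to $2D_8 \subset 2A_7 \subset \Co_0$ of the anomaly $\omega_\Lambda$ for $\Aut(V_\Lambda)$ acting on~$V_\Lambda$. By the input from \cite{JFT} recalled before the lemma, the restriction map $\H^3(\Co_0,\rU(1))_{(2)} = \bZ_8 \to \H^3(2D_8,\rU(1)) = \bZ_{16}$ is an injection onto the even subgroup $\bZ_8 \subset \bZ_{16}$; so $\omega^\natural|_{2D_8}$ has $2$-part of order at most~$8$, with equality if and only if $\omega_\Lambda|_{\Co_0}$ generates $\H^3(\Co_0,\rU(1))_{(2)}$.

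The main obstacle is thus the final step: showing that $\omega_\Lambda|_{\Co_0}$ is a generator of $\H^3(\Co_0,\rU(1))_{(2)} \cong \bZ_8$. I expect to establish this by identifying $\omega_\Lambda|_{\Co_0}$ with the fractional Pontryagin class $\tfrac{p_1}{2}(\Lambda\otimes\bR)$ of the $24$-dimensional defining representation --- the explicit generator exhibited in Theorem~\ref{conway theorem}. This identification is the bosonic analog of the fermionic statement recorded in Example~\ref{example fer 24}, and follows from the general principle that the anomaly of a lattice conformal field theory is the fractional Pontryagin class of the lattice representation. Failing an immediate appeal to that principle, non-triviality of the $2$-part of $\omega_\Lambda|_{\Co_0}$ can be verified directly by computing the modular multiplier of a suitable twined partition function $\tr(q^{L_0 - 1}\sigma; V_\Lambda)$, $\sigma \in \Co_0$, via the recipe at the end of Section~\ref{physical picture}.
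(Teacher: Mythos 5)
Your first step --- using the $p=7$ T-duality to identify $\omega^\natural|_{2D_8}$ with the restriction to $2D_8 \subset 2A_7$ of the anomaly $\widehat{\omega}$ for the Leech lattice CFT --- is sound, and it is essentially the same manoeuvre the paper performs for the odd primes in Lemma~\ref{moonshine lemma 5 7 13} (there $\kappa|_{2X}=0$ and $\alpha=0$ give $\omega^\natural|_{2X} = \beta|_{2X} = \widehat{\omega}|_{2X}$). But this is not the route the paper takes for this lemma, and the reason is that your final step does not go through. The paper instead applies T-duality to $2D_8$ itself, viewed as the central extension $2.D_8$ by its centre (a $2\rB$-element of $\bM$): the T-dual subgroup of $\Aut(V_\Lambda)$ is again the \emph{nonsplit} extension $2.D_8$, and one checks directly in $\H^3(2D_8,\rU(1))\cong\bZ_{16}$ that the classes which vanish on the centre and whose $\alpha$-component is the nonsplit extension class are exactly the classes of order $8$. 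No information about the Leech anomaly is needed.

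The gap in your argument is the claim that $\omega_\Lambda|_{2D_8}$ is the restriction of a generator of $\H^3(\Co_0,\rU(1))_{(2)}\cong\bZ_8$. First, $\Co_0$ is not (known to be) a subgroup of $\Aut(V_\Lambda)=\widehat{\Lambda}.\Co_0$: the action of $2A_7$ on $V_\Lambda$ factors through the centralizer $2^{24}.\Co_0$ of its central involution, and this extension does not split. So the most you can say is that $\omega_\Lambda|_{2D_8}$ is restricted from a class on $2^{24}.\Co_0$, whose $2$-primary $\H^3$ is far larger than $\bZ_8$ and is not controlled by Theorem~\ref{conway theorem}; indeed Section~\ref{closing remarks} is devoted to explaining that the $2$-part of the Leech/moonshine anomaly is \emph{not} pulled back from the $\Co_1$ or $\Co_0$ quotient, precisely because the nonsplit extension data is where the interesting $2$-torsion lives. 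Second, the ``general principle'' that the anomaly of a lattice CFT restricted to the orthogonal group is $\frac{p_1}2$ of the lattice representation is not established in the paper and cannot be extracted from Example~\ref{example fer 24}, which concerns the $c=12$ fermionic theory $\Fer(24)\sslash\bZ_2$, not the $c=24$ bosonic theory $V_\Lambda$. Your fallback via multipliers of twined characters is also insufficient as stated: it only sees restrictions to cyclic subgroups, and an order-$8$ class in $\H^3(2D_8,\rU(1))\cong\bZ_{16}$ restricts to a class of order only $4$ on the cyclic $\bZ_8\subset 2D_8$, so one would need an actual computation of the relevant eta-product multiplier (and of the $\Co_0$-fusion of that $\bZ_8$), none of which is supplied. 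As it stands the order of $\omega^\natural|_{2D_8}$ is not determined by your argument.
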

\begin{proof}
 The above discussion implies that the subgroups $2D_8 \subset \bM$ and $2D_8 \subset \Aut(V_\Lambda)$ are T-dual by the usual orbifold construction $V_\Lambda = V^\natural \sslash  \bZ_2$. The classes in $\H^3(2D_8,\rU(1))$ implementing such a T-duality are those with order $8$.
\end{proof}

Combining Lemmas~\ref{even part 1} and~\ref{even part 2} completes the proof of Theorem~\ref{moonshine theorem}:

\begin{lemma} \label{even part 3}
  $\omega^\natural|_G$ has order $8 \times (\text{odd})$.
\end{lemma}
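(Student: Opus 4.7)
The plan is to combine Lemmas~\ref{even part 1} and~\ref{even part 2} to pin down the $2$-part of the order of $\omega^\natural|_G$ as exactly $8$. For the lower bound, I will use that $G = 2^{1+24}.\Co_1$ contains a $2$-Sylow of $\bM$, so every $2$-subgroup of $\bM$ is conjugate into $G$; in particular, after conjugation $2D_8 \subset G$. Restriction is a homomorphism, so the order of $\omega^\natural|_G$ must be divisible by the order of $\omega^\natural|_{2D_8} = 8$.

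For the upper bound on the $2$-part, Lemma~\ref{even part 1} supplies a class $\xi \in \H^3(\Co_1, \rU(1)) \cong \bZ_{12}$ with $8\omega^\natural|_G = \pi^* \xi$, where $\pi\colon G \twoheadrightarrow \Co_1$ is the quotient map. It will suffice to show that the $2$-component $\xi_{(2)} \in \bZ_4$ vanishes, for then $16\omega^\natural|_G = \pi^*(2\xi)$ has odd order. Since the central $\bZ_2 \subset 2D_8$ coincides with the $2B$-element generating the center of $G$ (by construction it is the T-dual of the central involution of $\Co_0$), $\pi$ restricts to the quotient $2D_8 \twoheadrightarrow D_8 \subset \Co_1$, and Lemma~\ref{even part 2} then gives $(\pi^* \xi)|_{2D_8} = 8\omega^\natural|_{2D_8} = 0$.

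The heart of the argument is therefore the $2$-local injectivity of the map
\begin{equation*}
\H^3(\Co_1, \rU(1))_{(2)} \;\longrightarrow\; \H^3(2D_8, \rU(1))_{(2)}, \qquad \xi \longmapsto \mathrm{inf}_{D_8}^{2D_8}(\xi|_{D_8}).
\end{equation*}
My plan is to factor this map through $\Co_0 = 2.\Co_1$ using the obvious commuting square (the lift $2D_8 \subset \Co_0$ projects onto $D_8 \subset \Co_1$, and this $D_8$ is $\Co_1$-conjugate to the image of $2D_8 \subset G$ since both arise from the unique conjugacy class of $D_8$ inside $2A_7$), so the map becomes $\H^3(\Co_1)_{(2)} \overset{\mathrm{inf}}{\to} \H^3(\Co_0)_{(2)} \overset{\mathrm{res}}{\to} \H^3(2D_8)_{(2)}$. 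The second arrow is the injection $\bZ_8 \hookrightarrow \bZ_{16}$ onto the even subgroup established in \cite{JFT} and already invoked for Lemma~\ref{even part 2}. For the first arrow, a short LHS spectral sequence calculation for $1 \to \bZ_2 \to \Co_0 \to \Co_1 \to 1$ suffices: every potential differential landing in $E_r^{3,0}$ originates in either $\H^1(\Co_1, \bZ_2) = 0$ (vanishing because $\Co_1$ is perfect) or $\H^0(\Co_1, \H^2(\bZ_2, \rU(1))) = 0$ (vanishing because $\H^2(\bZ_2, \rU(1)) = 0$), so $E_2^{3,0} = E_\infty^{3,0}$ and inflation is injective.

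The main obstacle I anticipate is verifying this factorization and injectivity cleanly — in particular identifying the correct $\Co_0$-lift of the relevant $D_8$ and ensuring naturality of the commuting square. Once the LHS analysis and the input from \cite{JFT} are in place, however, the remainder is bookkeeping, and the two bounds combine to give $\omega^\natural|_G$ of order $8 \times (\text{odd})$.
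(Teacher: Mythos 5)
Your proof is correct, and its overall shape is the same as the paper's: the lower bound comes from $\omega^\natural|_{2D_8}$ having order $8$ (Lemma~\ref{even part 2}), and the upper bound from combining Lemma~\ref{even part 1} with the fact that $\H^3(\Co_1,\rU(1))_{(2)}$ is detected on $2D_8$. Where you genuinely differ is in how that detection statement is justified. The paper quotes from \cite{JFT} that $\H^3(\Co_1,\rU(1))_{(2)}\cong\bZ_4$ is detected by restriction to $D_8\subset\Co_1$ and then asserts without comment that classes are ``also detected by pulling back to $2D_8$''; that last step is not formally automatic, since inflation along a central extension can have kernel, so the paper is implicitly leaning on \cite{JFT} there as well. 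You instead factor $\H^3(\Co_1,\rU(1))_{(2)}\to\H^3(2D_8,\rU(1))$ as inflation to $\Co_0$ followed by restriction to $2D_8\subset\Co_0$: the second arrow is precisely the injection $\bZ_8\hookrightarrow\bZ_{16}$ already quoted in Section~\ref{monster 2}, and the first arrow is injective by your LHS computation, which is correct --- the only sources of differentials into $E_r^{3,0}$ are $E_2^{1,1}=\H^1(\Co_1,\bZ_2)=0$ (as $\Co_1$ is perfect and the extension is central) and $E_2^{0,2}=\H^0(\Co_1,\H^2(\bZ_2,\rU(1)))=0$. This makes the detection step self-contained relative to what the paper explicitly states, at the modest cost of checking that the two relevant copies of $D_8$ in $\Co_1$ agree up to conjugacy, which your appeal to the uniqueness of the class of $D_8$ in $A_7$ handles. (One cosmetic slip: once $\xi_{(2)}=0$ you can conclude directly that $8\omega^\natural|_G=\pi^*\xi$ has odd order; the detour through $16\omega^\natural|_G$ is unnecessary.)
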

\begin{proof}
  By Lemma~\ref{even part 1}, $8\omega^\natural|_G$ is pulled back from $\Co_1$. By \cite{JFT}, $\H^3(\Co_1,\rU(1))_{(2)} \cong \bZ_4$, and classes are detected by restricting to $D_8$, and so they are also detected by pulling back to $2D_8$. But $8\omega^\natural|_{2D_8} = 0$ by Lemma~\ref{even part 2}.
\end{proof}

\subsection{$\omega^\natural$ is not a Chern class}\label{section not c2}

The first sentence of Theorem~\ref{moonshine theorem} having been verified, we turn to proving the second sentence, which asserts that $\omega^\natural \neq c_2(V)$ for any complex representation $V$ of $\bM$. The argument in this section was suggested to me by D.\ Treumann. Denote by $R(\bM)$ the complex representation ring of $\bM$. Since $\H^1(\bM,\rU(1)) = 0$, $c_2 : R(\bM) \to \H^3(\bM,\rU(1))$ is linear. Let $2D_8$ denote the binary dihedral group centralizing some chosen element of conjugacy class $7\rB$ used in Section~\ref{monster 2}. We know that $\omega^\natural|_{2D_8}$ has order $8$. Our strategy will be to compute the restriction $c_2|_{2D_8} : R(\bM) \to \H^3(2D_8,\rU(1)) \cong \bZ_{16}$ and show that $\omega^\natural|_{2D_8}$ is not in the image. In fact, we will show that $c_2|_{2D_8}$ is the zero map.

\begin{lemma}\label{merging of conjugacy classes}
 The seven conjugacy classes in $2D_8$ --- the identity, the central element, three of order $4$, and two of order $8$ ---
 merge in $\bM$ to the $\bM$-conjugacy classes $1$, $2\rB$, $4\rD$, and~$8\rF$.
\end{lemma}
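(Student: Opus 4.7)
My plan is as follows. First, I dispense with the two ``central'' classes: the identity obviously goes to $1A$, and the central element $z$ of $2D_8$ lies in $\bM$-class $2B$ by the same argument used just before Lemma~\ref{even part 2}, since $z$ generates the central $\bZ_2 \subset \Co_0 = 2.\Co_1$ and this in turn generates the center of $C_\bM(2B) = 2^{1+24}.\Co_1$ under the embedding $\Co_0 \subset 2^{1+24}.\Co_1$.

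Next, the five remaining classes of $2D_8$ are tightly constrained by the ATLAS power maps for $\bM$. Every order-4 element of $2D_8$ squares to $z \in 2B$, so it must lie in one of the only two $\bM$-classes of order 4 that square to $2B$, namely $4C$ and $4D$. Likewise, every order-8 element of $2D_8$ has $z$ as its fourth power, restricting it to the short list of order-8 $\bM$-classes with fourth power $2B$; once the order-4 fusion is decided, the compatible order-8 classes are further cut down by the squaring power map.

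To finish, I would invoke GAP's Character Table Library to compute the composed class fusion along the chain $2D_8 \subset 2A_7 \subset \Co_0 \subset 2^{1+24}.\Co_1 \subset \bM$, all of whose intermediate fusion maps are tabulated there (the alternative chain $2D_8 \subset 2A_7 \subset 7^{1+4}:2A_7 \subset C_\bM(7\rB) \subset \bM$ gives an independent consistency check). Tracking each of the three order-4 and two order-8 $2D_8$-classes through this composition should show that they all collapse to the $\bM$-classes $4D$ and $8F$ respectively. This mirrors the GAP-assisted fusion argument already used in the proof of Lemma~\ref{omega 3c}.

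The only real obstacle is bookkeeping: one must verify that the copy of $2A_7$ inside $\Co_0$ (centralizing a lift of $7\rA \in \Co_1$) agrees with the one inside $7^{1+4}:2A_7$ so that the correct tabulated fusion is used, and one must match the three order-4 (resp.\ two order-8) $2D_8$-classes with the (typically fewer) order-4 (resp.\ order-8) classes of $2A_7$. Both are finite checks handled directly by GAP, with no further conceptual input required.
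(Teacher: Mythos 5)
Your plan has a genuine gap in the step that carries all the content. The primary chain $2D_8 \subset 2A_7 \subset \Co_0 \subset 2^{1+24}.\Co_1 \subset \bM$ is broken at the third link: there is no subgroup of $C_{\bM}(2\rB) = 2^{1+24}.\Co_1$ isomorphic to $\Co_0$. Such a subgroup $H$ would meet $O_2 = 2^{1+24}$ in a normal subgroup of $H\cong\Co_0$, hence in $1$ or $Z(H)$; the first is impossible because $\Co_0$ does not embed in $\Co_1$, and in the second case $H$ either projects to a complement of $2^{24}$ in $2^{24}.\Co_1$ (if $Z(H)=Z(O_2)$), contradicting the non-splitness cited in Section~\ref{closing remarks}, or produces a nonzero $\Co_1$-fixed vector in $2^{24}=\Lambda/2\Lambda$ (if not), contradicting $\H^0(\Co_1,2^{24})=0$. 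The relation between $\Co_0$ and $\bM$ throughout this paper is T-duality, not containment, and for the same reason the identification of the central element with $2\rB$ cannot be read off from the remark before Lemma~\ref{even part 2}, which concerns the copy of $2D_8$ inside $\Co_0=\rO(\Lambda)$ acting on $V_\Lambda$, not conjugacy in $\bM$. Your fallback chain through $C(7\rB)=7^{1+4}{:}2A_7$ is the correct ambient containment, but the fusion of an explicitly chosen $2D_8\subset 2A_7$ through that chain into $\bM$ is not pre-tabulated anywhere; since the power maps alone do not single out $4\rD$ (you concede this, and in fact more order-$4$ classes of $\bM$ square to $2\rB$ than the two you list), everything rests on a computation you have not performed. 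As written this is a plan, not a proof.

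For comparison, the paper settles the lemma with a short ATLAS power-map argument that needs no computer and no fusion tables: an order-$4$ element $u\in 2D_8$ commutes with the chosen element $t$ of class $7\rB$, so $ut$ has order $28$ with $(ut)^4 = t \in 7\rB$; since $28\rA$ is the unique class whose fourth power is $7\rB$, one gets $ut\in 28\rA$ and hence $(ut)^7 \in (28\rA)^7 = 4\rD$, forcing all order-$4$ classes of $2D_8$ to merge into $4\rD$. Then $8\rF$ is the unique square root of $4\rD$ and $(4\rD)^2 = 2\rB$, which determines the order-$8$ classes and the central element. If you want to salvage your approach, replace the $\Co_0$ chain by this use of the centralized $7\rB$ element, or actually carry out the GAP fusion computation along $2D_8\subset 2A_7\subset 7^{1+4}{:}(3\times 2S_7)\subset\bM$.
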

\begin{proof}
 Choose any element of order $4$ in $2D_8$, and multiply it with the fixed element of conjugacy class $7\rB$ centralized by $2D_8$. One produces an element of order $28$ in $\bM$ whose fourth power is of class $7\rB$ and whose seventh power is the chosen element of order $4$. The unique fourth root of class $7\rB$ is class $28\rA$, and $(28\rA)^7 = 4\rD$, so all order-4 classes in $2D_8$ merge in $\bM$. But $4\rD$ has a unique square root --- $8\rF$ --- so the two classes of order $8$ must also merge. Finally, $(4\rD)^2 = 2\rB$.
\end{proof}

The group $2D_8$ is the McKay group corresponding to the Dynkin diagram $D_6$. Let $V_0$ denote the trivial representation, $V_1$ the one-dimensional representation in which the kernel of the $2D_8$ action is cyclic of order $8$, $V_2$ and $V_3$ the other two one-dimensional representations, $V_4$ the two-dimensional real representation of $D_8$, and $V_5$ and $V_6$ the two faithful irreps. Taking $V_6$ to be the ``defining'' 2-dimensional representation, the McKay graph is:
$$
\begin{tikzpicture}
  \path (0,0) node (SW) {$V_1$}
        (1,1) node (A) {$V_6$}
        (0,2) node (NW) {$V_0$}
        (2,1) node (M) {$V_4$}
        (3,1) node (B) {$V_5$}
        (4,2) node (NE) {$V_2$}
        (4,0) node (SE) {$V_3$};
  \draw (A) -- (SW); \draw (A) -- (NW); \draw (A) -- (M);
  \draw (B) -- (SE); \draw (B) -- (NE); \draw (B) -- (M);
\end{tikzpicture}
$$

Given a representation $V$ of $2D_8$, let $n_i$ be defined by $V \cong \bigoplus_{i=0}^6 V_i^{n_i}$. The basic theorem of characters of a finite group says that the numbers $n_i$ can be computed from knowing the traces over $V$ of all conjugacy classes.
Suppose that $V$ is restricted from an irrep of $\bM$. Then, by Lemma~\ref{merging of conjugacy classes}, $\tr_V(g)$ depends only on the order of $g\in 2D_8$.  A quick computation with the character table for $2D_8$ reveals:
\begin{multline*} \bigl(
n_0 , n_1 , n_2 , n_3 , n_4 , n_5 , n_6
\bigr)
= \\
\bigl(\tr_V(1) , \tr_V(2\rB) , \tr_V(4\rD) , \tr_V(8\rF)
\bigr) \cdot
\begin{pmatrix}
 1/16&  1/16&  1/16&  1/16&   1/8&   1/8&   1/8\\
 1/16&  1/16&  1/16&  1/16&   1/8&  -1/8&  -1/8\\
  5/8&  -3/8&   1/8&   1/8&  -1/4&     0&     0\\
  1/4&   1/4&  -1/4&  -1/4&     0&     0&     0
\end{pmatrix}\end{multline*}
Note that $n_2 = n_3$ and $n_5 = n_6$. Using the character table for $\bM$ from the Atlas, we can now compute the map $R(\bM) \to R(2D_8)$. Somewhat surprisingly:

\begin{lemma} \label{multiples of 8}
  With notation as above, if $V$ is a $\bM$-representation then $n_4$ is divisible by $8$ and $n_5=n_6$ is divisible by $16$. The numbers $n_0$, $n_1$, and $n_2=n_3$ can all be odd.\qed
\end{lemma}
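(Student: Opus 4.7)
The plan is to reduce the divisibility claims to a check on the character table of $\bM$, aided by one clean structural observation about its $2$-local structure. Unpacking the matrix identity above (or recomputing the inner products directly) yields the compact formulas
\[ n_4 = \frac{\chi_V(1)+\chi_V(2\rB)-2\chi_V(4\rD)}{8}, \qquad n_5 = n_6 = \frac{\chi_V(1)-\chi_V(2\rB)}{8}. \]
Since each $n_i$ is a $\bZ$-linear function of the four character values $\chi_V(1),\chi_V(2\rB),\chi_V(4\rD),\chi_V(8\rF)$, it suffices to verify all the claimed divisibilities (and to exhibit witnesses for the ``can be odd'' assertions) on the $194$ irreducible characters of $\bM$.

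For $n_5=n_6$ there is a clean structural reason. An element $z$ of class $2\rB$ is the central involution of the extraspecial subgroup $Q=2^{1+24}$ of its centralizer $C(z)=2^{1+24}.\Co_1$. Writing $V=V^+\oplus V^-$ for the decomposition into $z$-eigenspaces, one has $\chi_V(1)-\chi_V(z)=2\dim V^-$. But $V^-$, viewed as a $Q$-module, is a sum of copies of the unique faithful irreducible of $Q$, which has dimension $2^{12}$. Hence $\dim V^-$ is divisible by $2^{12}$, so $n_5=n_6=\dim V^-/4$ is divisible by $2^{10}$, far more than the required $16$.

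For $n_4$ the analogous structural argument is more delicate. Because $4\rD$ is a real class squaring to $2\rB$, an element $g\in 4\rD$ has eigenvalues $\pm i$ of equal multiplicity on $V^-$, so $\tr_{V^-}(g)=0$ and $n_4=\tfrac{1}{2}\dim(V^+)_{g=-1}$. In principle the factor of $16$ can be extracted by decomposing $V^+$ under $Q/\langle z\rangle\cong 2^{24}$ (whose character group is canonically $\Lambda/2\Lambda$), pairing up the $g$-orbits on characters, and invoking properties of the induced $\Co_1$-action on the multiplicity spaces. In practice the fastest route is to verify the congruence $\chi_V(1)+\chi_V(2\rB)\equiv 2\chi_V(4\rD)\pmod{64}$ directly from the ATLAS using GAP's character-table library; I expect this bookkeeping across all $194$ irreducibles to be the main obstacle to a fully self-contained proof.

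The ``can be odd'' statements are then checked by exhibiting witnesses. The trivial representation gives $n_0=1$. For $n_1$ and $n_2=n_3$ one reads off the character values $\chi_V(1),\chi_V(2\rB),\chi_V(4\rD),\chi_V(8\rF)$ from the ATLAS for a handful of small-dimensional irreducibles of $\bM$ and computes directly, using
\[ n_1=\tfrac{\chi_V(1)+\chi_V(2\rB)-6\chi_V(4\rD)+4\chi_V(8\rF)}{16}, \quad n_2=n_3=\tfrac{\chi_V(1)+\chi_V(2\rB)+2\chi_V(4\rD)-4\chi_V(8\rF)}{16}, \]
verifying that odd values do occur.
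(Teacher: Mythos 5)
Your formulas for the $n_i$ match the paper's matrix, and your overarching strategy --- use linearity of $V\mapsto n_i(V)$ to reduce to the $194$ irreducible characters of $\bM$ and check the resulting congruences against the ATLAS --- is exactly the proof the paper intends: the lemma is stated with a tombstone and no argument precisely because it \emph{is} that finite character-table computation. Where you genuinely differ is the structural proof that $16\mid n_5=n_6$: since $V^-$ restricted to $Q=2^{1+24}$ is a multiple of the unique $2^{12}$-dimensional faithful irreducible, $n_5=\tfrac14\dim V^-$ is divisible by $2^{10}$. That argument is correct, proves strictly more than the lemma asserts, and does not appear in the paper; it is the most valuable part of your write-up. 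Two caveats. First, in the $n_4$ reduction the vanishing of $\tr_{V^-}(g)$ for $g\in 4\rD$ requires knowing that $4\rD$ is a rational class of $\bM$ (so $\chi_V(g)\in\bZ$, forcing the $\pm i$ eigenvalues on $V^-$ to occur with equal multiplicity); this is true but should be stated, since it is not automatic for an arbitrary complex $V$. Second, for $8\mid n_4$ and for the existence of odd values of $n_1$ and $n_2=n_3$ you have set up the correct congruences but, by your own account, not carried out the verification, so as written those claims are not discharged --- you are at exactly the paper's level of completeness there, no more. If you could push the sketched decomposition of $V^+$ over $2^{24}\cong\widehat{\Lambda/2\Lambda}$ far enough to extract the factor of $16$ on $\dim(V^+)_{g=-1}$, that would be a genuine improvement over the paper, but the sketch as given does not yet do so.
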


The cohomology of McKay groups is easy to understand. It is easiest to work with $\bZ$-coefficients, since $\H^\bullet(G,\bZ) = \H^{\bullet-1}(G,\rU(1))$ is a graded ring. If $G$ is McKay, this ring is supported only in even degrees. As holds for any finite group, $\H^2(G,\bZ) \cong \hom(G,\rU(1))$ consists of first Chern classes of one-dimensional representations. If $G$ is McKay, then from the long exact sequence associated to $\SU(2)/G \to BG \to B\SU(2)$ one can show that $\H^4(G,\bZ) \cong \bZ_{|G|}$. It has a distinguished generator: $c_2$ of the two-dimensional defining representation $G \to \SU(2)$.

We wish to understand the values of $c_2(V)$ for $V$ a representation of $G = 2D_8$. The following argument is based on \cite[\S6]{JFT}. We first  give ourselves a variable $t$ of degree $2$ to track degrees of mixed-degree expressions, so that the total Chern character of a representation $V$ is $c(V) = 1 + c_1(V)t + c_2(V)t^2 + \dots$. The Whitney sum formula $c(V\oplus W) = c(V) c(W)$ says that to calculate $c_2(V)$ for $V$ arbitrary, it suffices to know the values of $c_1(V_i)$ and $c_2(V_i)$ for $V_i$ an irrep. For $G = 2D_8$, we have $\H^2(2D_8,\bZ) = \bZ_2^2$. The three non-zero elements are $c_1(V_1)$, $c_1(V_2)$, and $c_1(V_3)$ --- $c_1(V_1) = c_1(V_2) + c_1(V_3)$. Since $|2D_8| = 16$, $\H^4(2D_8,\bZ) = \bZ_{16}$, with distinguished generator $c_2(V_6)$. We will work with this generator throughout, so that for example ``$c_2(V) = 3$'' means $c_2(V) = 3 c_2(V_6)$. 

We claim:
$$ c_1(V_1)^2 = 0.$$
Indeed, $c_1(V_1)^2$ is certainly 2-torsion, but $V_1$ is pulled back from a one-dimensional representation of $2D_{16}$, and the restriction map $\bZ_{32} \cong \H^4(2D_{16},\bZ) \to \H^4(2D_8,\bZ) \cong \bZ_{16}$ necessarily vanishes on the 2-torsion subgroup of the domain.

By working in $\SU(2)$ we know that $c_1(\Sym^2(V_6)) = 0$ and $c_2(\Sym^2(V_6)) = 4$. Over $2D_8$, $\Sym^2(V_6) = V_1 \oplus V_4$, and so
\begin{multline*} 1 + 4t^2 + \cdots = (1 + tc_1(V_1))(1 + tc_1(V_4) + t^2 c_2(V_4)) \\ = 1 + 2tc_1(V_1) + t^2(c_1(V_1)^2 + c_2(V_4)) + \cdots = 1 + t^2c_2(V_4) + \cdots \end{multline*}
since $c_1(V_1)$ is $2$-torsion and $ c_1(V_1)^2 = 0$. Thus
$$ c_2(V_4) = 4.$$

Continuing on, we have $\Sym^3(V_6) = V_5 \oplus V_6$, and from $\mathrm{SU}(2)$ we learn that $c_2(\Sym^3(V_6)) = 10$, so
$$ c_2(V_5) = 9.$$
Finally, $\Sym^4(V_6) = V_1 \oplus V_4 \oplus (V_2 \oplus V_3)$ and $c_2(\Sym^4(V_6)) = 20$, and so
$$ c_2(V_2 \oplus V_3) = 16 = 0 \mod 16.$$

Given Lemma~\ref{multiples of 8} and the fact that $c_1(V_1)^2 = 0$, we conclude:

\begin{lemma}\label{restrictions of c2s vanish}
  For every $\bM$-representation $V$, $c_2(V)|_{2D_8} = 0 \in \H^3(2D_8,\rU(1))$. \qed
\end{lemma}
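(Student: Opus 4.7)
The plan is to expand $c_2(V|_{2D_8})$ via the Whitney sum formula in the decomposition $V|_{2D_8} = \bigoplus_{i=0}^{6} V_i^{n_i}$ and show every term is divisible by $16$ using Lemma~\ref{multiples of 8} and the data already computed above.

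First I would identify the first Chern classes $c_1(V_i) \in \H^2(2D_8,\bZ) = \bZ_2^2$. Trivially $c_1(V_0)=0$; the classes $c_1(V_1), c_1(V_2), c_1(V_3)$ are the three nonzero elements of $\bZ_2^2$, satisfying $c_1(V_1)=c_1(V_2)+c_1(V_3)$ as already noted. The two-dimensional faithful irreps $V_5$ and $V_6$ both land in $\SU(2)$, so $c_1(V_5) = c_1(V_6) = 0$. Finally, $V_4$ factors through $D_8$ as the two-dimensional irrep, whose determinant is the unique non-trivial character of $D_8$ with non-trivial value on rotations of order $4$; pulled back to $2D_8$ this gives $c_1(V_4) = c_1(V_1)$.

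Next I would expand $c(V|_{2D_8}) = \prod_i c(V_i)^{n_i}$ modulo $t^3$, producing an explicit polynomial in the $n_i$, the $c_1(V_i)$, and the $c_2(V_i)$. Substituting $c_2(V_6)=1$, $c_2(V_5)=9$, $c_2(V_4)=4$, $c_2(V_j)=0$ for one-dimensional $V_j$, together with $c_1(V_1)^2=0$ and $c_1(V_2)c_1(V_3) \equiv 0 \pmod{16}$, collapses this expression significantly. Squaring the relation $c_1(V_1)=c_1(V_2)+c_1(V_3)$ further gives $c_1(V_2)^2 + c_1(V_3)^2 \equiv 0 \pmod{16}$; since both squares are $2$-torsion in $\bZ_{16}$, they lie in $\{0,8\}$ and are equal.

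Finally, I would invoke Lemma~\ref{multiples of 8}: the conditions $8 \mid n_4$ and $16 \mid n_5 = n_6$ kill the terms $4n_4$, $9n_5$, and $n_6$ modulo $16$. The remaining contributions are sums of $\binom{n_i}{2}c_1(V_i)^2$ and $n_in_j c_1(V_i)c_1(V_j)$ for $i,j\in\{1,2,3,4\}$; using $c_1(V_4)=c_1(V_1)$, $c_1(V_1)^2=0$, and $c_1(V_2)c_1(V_3) \equiv 0 \pmod{16}$, they reduce to an integer combination of $c_1(V_2)^2$ and $c_1(V_3)^2$ with matching coefficients (courtesy of $n_2 = n_3$), and so vanish mod $16$ by the equality $c_1(V_2)^2 = c_1(V_3)^2$ combined with $c_1(V_2)^2 + c_1(V_3)^2 \equiv 0 \pmod{16}$.

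The only real obstacle is the bookkeeping: ensuring that every binomial coefficient and cross term is tracked correctly so that the divisibility hypotheses of Lemma~\ref{multiples of 8} together with the two-torsion identities for the $c_1(V_i)^2$ indeed cancel everything modulo $16$. No further conceptual input appears to be required.
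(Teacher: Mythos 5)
Your proposal is correct and follows essentially the same route as the paper: expand $c_2(V|_{2D_8})$ via the Whitney sum formula and kill every term modulo $16$ using the divisibility constraints of Lemma~\ref{multiples of 8} together with $c_1(V_1)^2=0$, $c_1(V_2)c_1(V_3)=16\equiv 0$, and $n_2=n_3$ --- the paper simply leaves this bookkeeping implicit. One small slip to fix: the determinant of the two-dimensional representation of $D_8$ is \emph{trivial} on rotations and nontrivial on reflections, so its kernel is the rotation subgroup; this is still the character pulling back to $V_1$, so your conclusion $c_1(V_4)=c_1(V_1)$ stands (it also follows directly from $0=c_1(\Sym^2 V_6)=c_1(V_1)+c_1(V_4)$ and $2$-torsion).
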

Since $\omega^\natural|_{2D_8}$ has order $8$, it cannot be a Chern class. The relation $c_2 = -2 \frac{p_1}2$ verifies that $\omega^\natural$ also is not a fractional Pontryagin class. This completes the proof of Theorem~\ref{moonshine theorem}.

Lemma~\ref{restrictions of c2s vanish} is our main evidence for Conjecture~\ref{c2=0 conjecture}, which predicts that $c_2(V) = 0$ already in $\H^3(\bM,\rU(1))$. We end by observing that Conjecture~\ref{c2=0 conjecture} follows from Conjecture~\ref{H4M conjecture}:

\begin{lemma}\label{conjecture implies conjecture}
 If $\H^3(\bM,\rU(1)) \cong \bZ_{24}$, then $c_2(V) = 0$ for every representation $V$ of~$\bM$.
\end{lemma}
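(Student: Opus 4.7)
The plan is to treat the $2$-part and $3$-part of $c_2(V) \in \H^3(\bM,\rU(1)) = \bZ_{24}$ separately, mirroring the prime-by-prime strategy used to prove Theorem~\ref{moonshine theorem}. Because $\bM$ is perfect, $c_1$ vanishes identically, so $c_2 : R(\bM) \to \H^3(\bM,\rU(1))$ is additive, and its image is a cyclic subgroup of $\bZ_{24}$.

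For the $2$-part, I would combine Lemma~\ref{restrictions of c2s vanish} with Lemma~\ref{even part 2}. Under the hypothesis, the $2$-part $\bZ_8 \subset \bZ_{24}$ is generated by $3\omega^\natural$, whose restriction to $2D_8$ has order $8$ in $\H^3(2D_8,\rU(1)) = \bZ_{16}$; so restriction to $2D_8$ is injective on this $2$-part. Since $c_2(V)|_{2D_8} = 0$, the class $c_2(V)$ must lie in the $3$-part $\bZ_3 \subset \bZ_{24}$, generated by $8\omega^\natural$.

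For the $3$-part, Lemma~\ref{omega 3c} says $\omega^\natural|_{\langle 3\rC\rangle}$ is nonzero, so the restriction map $\bZ_3 = \H^3(\bM,\rU(1))_{(3)} \to \H^3(\bZ_3,\rU(1)) = \bZ_3$ is an isomorphism. The problem therefore reduces to showing $c_2(V)|_{\langle 3\rC\rangle} = 0$ for every representation $V$. Since $3\rC$ is conjugate to its inverse in $\bM$ (characters at $3\rC$ are rational), $V|_{\langle 3\rC\rangle}$ decomposes as $V_0^{n_0} \oplus V_1^{m} \oplus V_2^{m}$ with $m = \tfrac{1}{3}(\dim V - \chi_V(3\rC))$. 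Using $c_1(V_2) = -c_1(V_1)$ in $\H^2(\bZ_3,\bZ) = \bZ_3$ and Whitney additivity, one gets $c_2(V)|_{\langle 3\rC\rangle} = -m\,c_1(V_1)^2 \in \H^4(\bZ_3,\bZ) = \bZ_3$, which vanishes exactly when $\dim V \equiv \chi_V(3\rC) \pmod 9$.

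By $\bZ$-linearity of $c_2$ it suffices to verify the congruence $\dim V_i \equiv \chi_{V_i}(3\rC) \pmod 9$ on each of the $194$ irreducible characters $V_i$ of $\bM$. This is the main obstacle: the centralizer $C(3\rC) = 3 \times \mathrm{Th}$ does not supply the divisibility on its own, as the Thompson group has irreducibles of dimension coprime to~$3$ (for instance the $248$-dimensional one), so the congruence must reflect a global constraint on which characters of $3 \times \mathrm{Th}$ occur as restrictions of $\bM$-characters. In practice this amounts to reading one column of the ATLAS character table of $\bM$, which I would carry out to close the argument.
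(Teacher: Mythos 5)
Your argument is correct and follows the same route as the paper's proof: the $2$-part is killed by the injectivity of restriction to $2D_8$ (via Lemma~\ref{even part 2}) together with Lemma~\ref{restrictions of c2s vanish}, and the $3$-part reduces via Lemma~\ref{omega 3c} to checking $c_2(V)|_{\langle 3\rC\rangle}=0$, which the paper likewise defers to a character-table computation. Your explicit reformulation of that computation as the congruence $\dim V \equiv \chi_V(3\rC) \pmod 9$ is a correct and useful sharpening of what must be read off from the ATLAS.
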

\begin{proof}
  If $\H^3(\bM,\rU(1)) \cong \bZ_{24}$, then restricting to $2D_8$ gives an injection $\H^3(\bM,\rU(1))_{(2)} \to \H^3(2D_8,\rU(1))$. The 2-part of the Lemma then follows from Lemma~\ref{restrictions of c2s vanish}. 
  The 3-part of the Lemma follows from Lemma~\ref{omega 3c}, since a character table computation confirms that $c_2(V) |_{\langle 3\rC\rangle} = 0$ for every $V$, and if $\H^3(\bM,\rU(1)) \cong \bZ_{24}$ then $\H^3(\bM,\rU(1))_{(3)} \to \H^3(\langle 3\rC\rangle,\rU(1))$ is an isomorphism.
\end{proof}

\subsection{Further remarks on the value of $\omega^\natural$} \label{closing remarks}

We calculated the order of the Moonshine anomaly~$\omega^\natural$ without needing to say much about its ambient cohomology group $\H^3(\bM,\rU(1))$. In particular, it is possible that $\H^3(\bM,\rU(1))$ has elements of order much higher than $24$, and it is an interesting question to find out how close $\omega^\natural$ comes to saturating $\H^3(\bM,\rU(1))$. The calculations in this section came from trying (and failing) to answer that question, and in particular to prove Conjecture~\ref{H4M conjecture} that $\omega^\natural$ generates $\H^3(\bM,\rU(1))$.

Since $\omega^\natural|_{2^{1+24}}$ implements a T-duality between $2^{1+24}$ and $2 \times 2^{24}$, it pulls back from a class $\omega_0 \in \H^3(2^{24},\rU(1))$: specifically, $\omega_0$ is the anomaly of the $2^{24}$-action on $V_\Lambda = V^\natural \sslash  \bZ_2$.  The lattice conformal field theory $V_\Lambda$ is constructed so that the anomaly for $\rU(1)^{24} \cong \widehat{\Lambda} \subset \Aut(V_\Lambda)$ is precisely the Leech lattice pairing thought of as a class in $\H^4(\rB\rU(1)^{24},\bZ) =  \Sym^2(\bZ^{24})$. It follows that $\omega_0$ the mod-2 reduction of the Leech pairing.

Although T-duality implies that $\omega^\natural|_{2^{1+24}}$ is pulled back from $2^{24}$, T-duality also guarantees that $\omega^\natural|_{2^{1+24}.\Co_1} $ is \emph{not} pulled back from $2^{24}.\Co_1$, since $2^{24}.\Co_0 \not\cong 2 \times (2^{24}.\Co_1)$. It is interesting to compare the LHS spectral sequences for $2^{1+24}.\Co_1$ and $2^{24}.\Co_1$. We have
\begin{gather*} 
\H^1(2^{1+24},\rU(1)) = \H^1(2^{24},\rU(1)) = 2^{24}, \\
\H^2(2^{1+24},\rU(1)) = 2^{274}.2, \quad \H^2(2^{24},\rU(1)) = 2.2^{274}.2.
\end{gather*}
The equality in the first line emphasizes that the pullback map $\H^1(2^{24},\rU(1)) \to \H^1(2^{1+24},\rU(1))$ is an isomorphism; the pullback $\H^2(2^{24},\rU(1)) \to \H^2(2^{1+24},\rU(1))$ is the obvious surjection.

Unfortunately, the group $\Co_1$ and the modules $2^{274}.2$ and $2.2^{274}.2$ are much too large for programs like Cohomolo and HAP to handle. Fortunately, given a $G$-module $M$, $\H^1(G,M)$ is ``just a matrix computation'' once one has a presentation of $G$. The ATLAS \cite{ATLAS} does not claim a presentation for $\Co_1$, but two presentations were found in \cite{MR886429}, and explicit matrices for one of these presentations were found by R.\ Parker via the methods of \cite{MR2228643}; this presentation, and the generators satisfying it, are listed in Appendix~\ref{Soicher matrices}. Using this presentation, it is  long but not difficult to compute:
$$ \H^1(\Co_1, 2^{274}.2) \cong \H^1(\Co_1,2.2^{274}.2) \cong \bZ_2.$$

Consider now the following long exact sequence produced by taking $\Co_1$ cohomology of the extension $2 \to 2.2^{274}.2 \to 2^{274}.2$:
$$\begin{tikzpicture}[anchor=base, yscale=.75]
 \path 
 (0,1) node {$2$} (2,1) node {$2.2^{274}.2$} (4,1) node {$2^{274}.2$}
 (-2,0) node {$\H^0(\Co_1,-)$} (-2,-1) node {$\H^1(\Co_1,-)$} (-2,-2) node {$\H^2(\Co_1,-)$}
 (0,0) node (a0) {$1$} (2,0) node (b0) {$1$} (4,0) node (c0) {$0$}
 (0,-1) node (a1) {$0$} (2,-1) node (b1) {$1$} (4,-1) node (c1) {$1$}
 (0,-2) node (a2) {$1$} (2,-2) node (b2) {$?$} 
 ;
 \draw[->] (a0) -- node[auto] {$\scriptstyle \sim$} (b0);
 \draw[->] (b1) -- node[auto] {$\scriptstyle \sim$} (c1);
 \draw[right hook->] (a2) -- (b2);
\end{tikzpicture}$$
Here numbers denote the dimensions over $\bF_2$ of the cohomology groups, and we do not draw in arrows that vanish. 

One finds in particular that the pullback map $\H^1(\Co_1,2.2^{274}.2) \to \H^1(\Co_1,2^{274}.2)$ is an isomorphism. In total degree $3$, therefore, the $E_2$ pages of the LHS spectral sequences for $\H^\bullet(2^{1+24}.\Co_1,\rU(1))$ and $\H^\bullet(2^{24}.\Co_1,\rU(1))$ agree except for the $\H^0(\Co_1,\H^3(\dots))$ entry, where the class we want is $\omega_0$, in the image of the pullback map:
$$
\begin{array}{cccc}
 \omega_0 & \\
 0 & \bZ_2 & \H^2(2^{274}.2) \\
 0 & 0 & \H^2(2^{24}) & \H^3(2^{24})  \\
 * & 0 & \bZ_2 & \bZ_{12} \\[12pt] 
 \multicolumn{4}{c}{\H^\bullet(\Co_1,\H^\bullet(2^{1+24},\rU(1)))}
\end{array} 
\hspace{1in} 
\begin{array}{cccc}
 \omega_0 & \\
 \bZ_2 & \bZ_2 & \H^2(2.2^{274}.2) \\
 0 & 0 & \H^2(2^{24}) & \H^3(2^{24}) \\
 * & 0 & \bZ_2 & \bZ_{12} \\[12pt] 
 \multicolumn{4}{c}{\H^\bullet(\Co_1,\H^\bullet(2^{24},\rU(1)))}
\end{array} 
$$
To save space we have abbreviated $\H^i(\Co_1,M)$ by $\H^i(M)$.

How is it possible, then, for $\omega^\natural|_{2^{1+24}.\Co_1}$ not to be pulled back from $2^{24}.\Co_1$? 
The only option is if, in fact, $\omega_0 \in \H^3(2^{24},\rU(1))$ does not extend to a class in $\H^3(2^{24}.\Co_1,\rU(1))$. It fails to extend exactly when the $d_2$ differential, connecting the $E_2$ and $E_3$ pages, is such that $\d_2\omega_0 \in \H^2(2.2^{274}.2)$ is non-zero and in the kernel of $\H^2(2.2^{274}.2) \to \H^2(2^{274}.2)$. By the above long exact sequence analysis, this kernel is the image of $\H^2(\Co_1,2) = \bZ_2$ in $\H^2(2.2^{274}.2)$, i.e.\ it ``is'' the extension $\Co_0 = 2.\Co_1$. 
This is where $\omega^\natural$ ``stores'' the information that the T-dual to $2^{1+24}.\Co_1$ is $2^{24}.\Co_0$ and not $2^{24}.\Co_1 \times 2$.

One way Conjecture~\ref{H4M conjecture} might be proved is if in fact $\H^3(2^{1+24}.\Co_1,\rU(1)) \cong \bZ_{24}$. For this to succeed, the $\bZ_2 = \H^1(\Co_1,2^{274.2})$ in the LHS for $\H^\bullet(2^{1+24}.\Co_1,\rU(1))$ will have to emit a nontrivial differential to either $\H^3(\Co_1,2^{24})$ or to $\H^4(\Co_1,\rU(1))$. These groups seem far beyond what current computer power can handle. 

Furthermore, one will need to handle the group  $\H^2(\Co_1,2^{24})$. 
Direct computation of $\H^2(\Co_1,2^{24})$ requires not just a presentation of $\Co_1$ but also a complete list of syzygies, which is currently inaccessible. Certainly $\H^2(\Co_1,2^{24}) \neq 0$: it contains the class $\kappa$ classifying the extension $2^{24}.\Co_1$, which is known not to split \cite{Ivanov09}. It is reasonable to speculate that this is the only possible extension, so that $\H^2(\Co_1,2^{24}) \cong \bZ_2$. In the notation of Section~\ref{t-duality}, the differential $\d_2 : \H^2(\Co_1,2^{24}) \to \H^4(\Co_1,\rU(1))$ is $\langle -\cup\kappa\rangle$ where $\langle,\rangle$ is the Leech pairing. A possible step towards proving Conjecture~\ref{H4M conjecture} would be to show that $\langle \kappa \cup \kappa \rangle \neq 0$. Using the arguments of Section~\ref{monster 2}, such a computation would already show that $\H^3(2^{1+24}.\Co_1,\rU(1))_{(2)}$, and hence $\H^3(\bM,\rU(1))_{(2)}$, has exponent~$8$.

\appendix

\section{Soicher's presentation for $\Co_1$}\label{Soicher matrices}

The presentation of $\Co_1$ from \cite{MR886429} is as a quotient of the Coxeter group with diagram
$$
\hspace*{-1in} 
\end{center}

\normalsize


\newcommand{\etalchar}[1]{$^{#1}$}

\end{document}